\title{Pseudotropical curves}
\author{Sergei Lanzat}
\address{Department of Mathematics, Technion- Israel
Institute of Technology, Haifa 32000, Israel}
\email{lanzat.sergei@gmail.com}
\author{Michael Polyak}
\address{Department of Mathematics, Technion- Israel
Institute of Technology, Haifa 32000, Israel}
\email{polyak@math.technion.ac.il}
\newcommand{\FAT}[1]{\mbox{{$\mathbb{#1}$}}}
\newcommand{\Fat}[1]{\mbox{{$\scriptscriptstyle\mathbb{#1}$}}}
\newcommand{\CC}{\FAT{C}}
\newcommand{\PP}{\FAT{P}}
\newcommand{\QQ}{\FAT{Q}}
\newcommand{\RR}{\FAT{R}}
\renewcommand{\SS}{\mathbb{S}}
\renewcommand{\ss}{\scriptstyle\mathbb{S}}
\newcommand{\ZZ}{\FAT{Z}}
\newcommand{\cZ}{\mathcal{Z}}
\newcommand{\cD}{\mathcal{D}}
\newcommand{\rr}{\Fat{R}}
\newcommand{\cc}{\Fat{C}}
\newcommand{\FG}{\mathfrak{g}}
\newcommand{\eps}{\varepsilon}
\newcommand{\G}{\Gamma}
\newcommand{\GL}{\Lambda}
\newcommand{\I}{\scriptscriptstyle I}
\newcommand{\II}{\scriptscriptstyle II}
\newcommand{\minus}{\smallsetminus}
\newcommand{\PTC}{plane pseudotropical curve}
\newcommand{\PTCs}{plane pseudotropical curves}
\newcommand\restrict[1]{\hspace{-0.7mm}\raisebox{-.5ex}{$|$}_{#1}}
\DeclareMathOperator{\codim}{codim}
\DeclareMathOperator{\ev}{ev}
\DeclareMathOperator{\mult}{mult}
\DeclareMathOperator{\sign}{sign}
\newtheorem{thm}{Theorem}[section]
\newtheorem{thm*}{Theorem}
\newtheorem{lem}[thm]{Lemma}
\newtheorem{prop}[thm]{Proposition}
\newtheorem{cor}[thm]{Corollary}
\newtheorem{rem}[thm]{Remark}
\theoremstyle{definition}
\newtheorem{defn}[thm]{Definition}
\newtheorem{defn*}[thm*]{Definition}
\newtheorem{ex}[thm]{Example}
\begin{document}

\begin{abstract}
We propose a generalization of tropical curves by dropping the rationality and integrality
requirements while preserving the balancing condition. An interpretation of such curves
as critical points of a certain quadratic functional allows us to settle the existence and
uniqueness problem. The machinery of dual polygons and the intersection theory also
generalize as expected. We study the homology of a compactified moduli space of rigid
oriented marked curves. A weighted count of rational pseudotropical curves passing
through a generic collection of points is interpreted via top-degree cycles on the moduli.
We construct a family of such cycles using quantum tori Lie algebras and show that in
the usual tropical case this gives the refined curve count of Block and G\"ottsche.
Finally, we derive a recursive formula for this Lie-weighted count of rational pseudotropical curves.
\end{abstract}

\keywords{pseudotropical curves, enumerative geometry, balanced graphs, quantum tori, Lie algebras}
\subjclass[2010]{14T05; 14N10}
\thanks{Both authors were partially supported by the ISF grant 1794/14}

\maketitle

\section{Introduction and main definitions}
\label{sec: intro}

\subsection{Motivation}
Tropical geometry is a rapidly developing field of mathematics that uses combinatorial and
piecewise linear structures to study and solve problems in different fields such as optimization,
combinatorics, algebraic geometry, integrable and dynamical systems, applied mathematics,
economics, computational biology, etc.
For a brief introduction to tropical geometry see e.g. \cite{IMS,M-St,M2,RGST05} and
references within.
In particular, tropical geometry is a powerful tool for exploring algebraic varieties.
Algebraic varieties can be degenerated to tropical varieties, which are polyhedral complexes
satisfying certain combinatorial properties.
While they have a simple combinatorial structure, tropical varieties encode a lot of information
about the geometry of the initial varieties and allow one to translate complicated algebraic or
geometric problems into a simple combinatorial language.

The most classical and well-studied case of algebraic varieties is that of algebraic curves.
A tropical counterpart of plane algebraic curves are balanced rational metric graphs in the plane.
Loosely speaking, a plane tropical curve is a finite union of rays and segments, each endowed with
a slope vector whose coordinates are integer (and directions are rational) and a positive integer
multiplicity; the balancing condition means that at every vertex the sum of the outgoing slope
vectors counted with their multiplicities adds up to zero.

In a number of works coming from different areas we see the appearance of similar graphs, but with
arbitrary edge vectors without any integrality/rationality conditions on their slopes and coordinates.
In particular, in algebraic geometry objects of this nature had appeared in a number of different
problems, see e.g. \cite{BH}, \cite{E}, \cite{Kaz}, \cite{Kri}, \cite{Lang}, \cite{MS}.

We propose a unified treatment of such generalized objects, which we call pseudotropical curves,
by dropping the rationality and integrality conditions while preserving the balancing condition.
Apart from their direct applications to above mentioned problems (which cannot be treated by the
classical tropical technique requiring rationality/integrality), our approach offers a new viewpoint
and provides new tools for studying the standard tropical curves.
Firstly, separating the integrality conditions from the rest leads to a better understanding of the
underlying nature and core properties of tropical curves.
Secondly, relaxing the rationality and integrality conditions allows slope vectors to vary in continuous
or smooth families, enabling one to study the dependence of moduli and enumerative characteristics
of curves on these parameters (see e.g. Section \ref{subsec: eps-dependence}); we expect this to
lead to some interesting differential equations.
Thirdly, dealing with more general objects gives additional degrees of freedom, which gives rise to an
additional flexibility of the theory (e.g., it leads to new recursion relations in enumerative problems,
see Section \ref{sec: recursion}).

\subsection{Structure of the paper}
In the Subsection \ref{subsec: definitions} below we introduce the basic object of our study - the
notion of \PTCs. In Section \ref{sec: varcalculus} we interpret \PTCs\ as critical points of a certain
quadratic functional and study the existence and uniqueness problem in terms of solutions of the
discrete Laplace equation with the Neumann-type boundary condition. In Section \ref{sec: duality}
we define dual polygons, study the intersections of \PTCs\ and prove the B\'{e}zout's and Bernstein's
theorems. In Section \ref{sec: moduli} we define orientations of \PTCs, introduce a notion of rigid
marked curves and study moduli spaces of rational oriented \PTCs. In Section \ref{sec: homology}
we study the top homology of a compactified moduli space of marked \PTCs; we study an enumerative
problem of a weighted count of rational irreducible \PTCs\ passing through a generic collection of
points and interpret it as a generalized degree of the evaluation map.
In Section \ref{sec: Lie and enumeration} we show how to produce generalized degrees as above
in terms of quantum tori Lie algebras and study the corresponding enumerative problem. In particular,
in the classical tropical case we identify our weighted curve count with the refined count of Block and
G\"ottsche \cite{BG}. In Section \ref{sec: recursion} we derive a recursive formula for the above 
Lie-weighted count of rational irreducible \PTCs.

\subsection{Definition of \PTCs}
\label{subsec: definitions}
In this section we define the notion of \PTCs\ that generalizes that of plane tropical curves.
For the basic notions of tropical curves, see \cite{FM, GM1, GM2, M1, M2}.

Consider a simple finite graph (i.e., a finite 1-dimensional CW-complex without loops and multiple edges)
without bivalent vertices. Let $\G$ be its subset obtained by removing all univalent vertices.
We shall use the following notations:
\begin{itemize}
\item  $V$ -- the set of vertices of $\G$,
\item $E$ -- the set of edges of $\G$,
\item $E^\infty$ -- the set of \emph{legs}, i.e. (half-open) edges of $\G$ incident to univalent vertices,
\item $E^0=E\minus E^\infty$ -- the set of \emph{bounded} edges,
\item $\partial e$ -- the set of vertices incident to the edge $e$.
\end{itemize}
For convenience we will usually assume that the set of legs is ordered: $E^\infty=\{e_1,e_2,\dots,e_n\}$.
Abstract tropical curves are metric graphs, where lengths of some edges are allowed to be infinite:
\begin{defn}\label{def:  Abstract tropical curves}
An {\em abstract tropical curve} is a pair $(\G, \ell)$, where $\ell$ is a complete intrinsic metric,
defined by prescribing (positive, real) length $l_e$ to each bounded edge $e\in E^0$ and $+\infty$ to
each leg of $\G$.
A curve $(\G, \ell)$ is {\em irreducible of genus $g$} if $\G$ is connected
and $g = 1 - |V| + |E^0|$ is the first Betti number $b_1(\G)$ of $\G$.
\end{defn}

\begin{defn}\label{def:PTC}
A \emph{parameterized \PTC}\ is a triple $(\G, \ell, h)$ such that
\begin{itemize}
 \item[(i)] $(\G, \ell)$ is an abstract tropical curve and
\item[(ii)] $h:\G\to\RR^2$ is a continuous proper map satisfying the following properties:
  \begin{itemize}
        \item[(a)]
        The map $h$ is an affine map on each edge $e\in E$.
        Namely, let $v\in \partial e$; then there is a vector $\xi_v(e)\in T_{h(v)}\RR^2$ such
        that the restriction $h|_e$ of $h$ to $e$ is given by $h(v)+t\,\xi_v(e)$ with $t\in[0,l_e]$.
        We assume that $\xi_v(e)\ne 0$ for $e\in E^\infty$.
        \item[(b)]
        For a fixed vertex $v$, the vectors $\xi_v(e)$ satisfy the \emph{balancing condition}
        \begin{equation}
        \label{eq:balancing}
        \sum_{e:v\in\partial e} \xi_v(e)=0,
        \end{equation}
        Note that if $\partial e = \{v, v'\}$ then $\xi_{v}(e) = - \xi_{v'}(e)$.
  \end{itemize}
\end{itemize}

We shall often work with oriented edges, so that $\partial e=e^+ - e^-$, where $e\in E^0$
is oriented from the starting vertex $e^-$ towards the terminal vertex $e^+$.  In this case
we will define $\xi(e)$ as $\xi_{e^-}(e)$.
Note that $\xi(-e)=-\xi(e)$, where $-e$ is $e$ with the opposite orientation.
Also, throughout the paper we always orient all legs towards infinity.

Two parameterized \PTCs\ $(\G, \ell, h)$ and $(\G', \ell', h')$ are isomorphic
if there exists an isometry $\rho: (\G, \ell)\to (\G', \ell')$ such that $h = h'\circ\rho$.
A \emph{\PTC}\ is an isomorphism class $C$ of a parameterized \PTC\ $(\G, \ell, h)$.
A \PTC\ is {\em irreducible of genus $g$} if an underlying abstract curve is such.
An irreducible \PTC\ of genus zero will be called a \emph{rational \PTC}.

A {\em $\Delta$-set} of $C$ is the (ordered) set of vectors $\xi$ along the legs of $\G$, i.e.
$$\Delta(C)=\{\xi(e_1),\xi(e_2),\dots,\xi(e_n)\}\,.$$
Note that the fact that all vertices of $C$ are balanced implies that all vectors in the $\Delta$-set sum
up to zero: $\sum_{i=1}^n\xi(e_i)=0$.
It is convenient to think intuitively about legs as about edges towards an ``infinite'' vertex; the condition
that all vectors in the $\Delta$-set sum up to zero then means that this vertex is balanced.
\end{defn}

\begin{figure}[htb]
\includegraphics[width=5in]{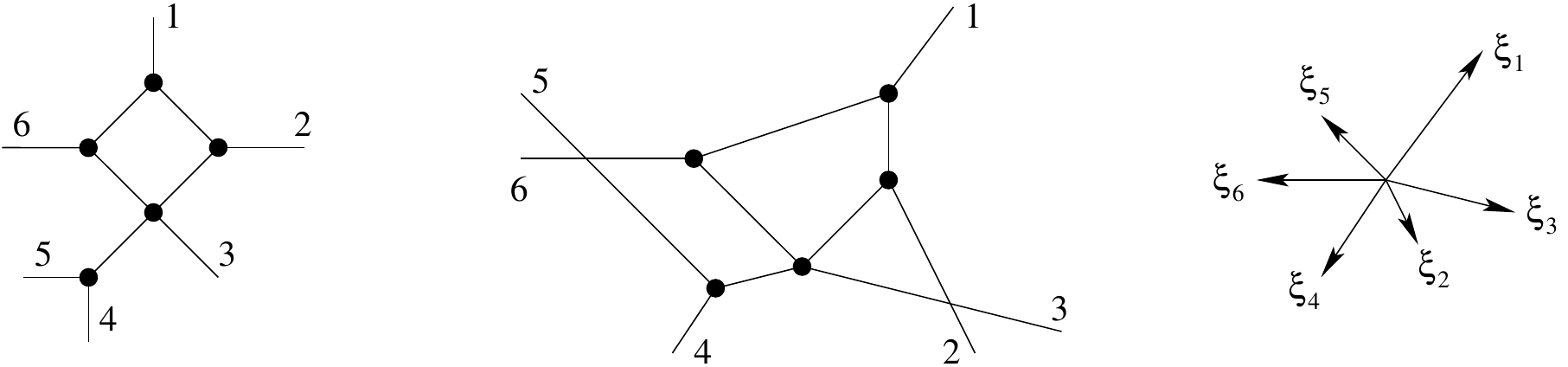}
\caption{An abstract curve, a plane pseudotropical curve and its $\Delta$-set}
\label{fig: pseudotropical curve}
\end{figure}

Throughout the paper we will identify $\RR^2$ with $\CC$.

\subsection*{Acknowledgements} We thank I. Itenberg, N. Kalinin, S. Kharlamov, G. Mikhalkin,
and E. Shustin for numerous discussions. A significant part of this work was done while the second
author was visiting CINVESTAV in Mexico City, whose hospitality he gratefully acknowledges.
Both authors were partially supported by the ISF grant 1794/14.

\section{Interrelations with variational calculus}
\label{sec: varcalculus}

\subsection{Plane pseudotropical curves as critical points}
\label{subsec: Neumann}
We shall interpret \PTCs\ (in particular, the balancing condition of Definition \ref{def:PTC}) as
critical points of a certain quadratic functional in a variational problem with boundary constrains.
The existence of such curves will be seen as a consequence of the existence and uniqueness of
a solution of the discrete Laplace equation with the Neumann-type boundary condition.

Fix an abstract tropical curve $(\G,\ell)$.
Assume that $\G$ is connected (otherwise consider the problem separately on every connected
component of $\G$) and all bounded edges of $\G$ are arbitrarily oriented.

We will use (with minor modifications) a standard terminology from electric networks (see e.g. \cite{BF}).
A {\em complex-valued potential} on $\G$ is a function $\phi:V\to\CC$.
A {\em boundary current} on $\G$ is a function $\xi:E^\infty\to\CC$.
Define the \emph{Neumann power functional} $P_\xi(\phi)\in\RR$ by
\begin{equation}\label{eq: Neumann power functional}
P_\xi(\phi) =
\sum_{e\in E^0}\frac{|\phi(e^+)-\phi(e^-)|^2}{2 l_e}
-\sum_{e\in E^\infty}\mathrm{Re}\left(\xi(e)\cdot\overline{\phi(e^-)}\right).
\end{equation}
We are looking for functions $\phi:V\to\CC$ that minimize the Neumann power functional $P_\xi(\phi)$
for a given boundary current $\xi$.
Given $\phi$, we extend $\xi$ to a {\em global current} $\widetilde{\xi}:E\to\CC$ by
$$\widetilde{\xi}(e) =\frac{\phi(e^+)-\phi(e^-)}{l_e}$$
for any $e\in E^0$ (so $\widetilde{\xi}(e)$ is the ``slope'', i.e. rate of change, of $\phi$ along $e$ per unit length).

\begin{prop}\label{prop: Neumann Minimum iff balanced}
A potential $\phi:V\to\CC$ minimizes the Neumann power functional $P_\xi(\phi)$ if and only if the
global current is balanced at every vertex, i.e., for any $v\in V$ we have
$$
\sum_{e:\ v=e^-} \widetilde{\xi}(e) = \sum_{e:\ v=e^+} \widetilde{\xi}(e)
$$
\end{prop}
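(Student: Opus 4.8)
The plan is to recognize $P_\xi(\phi)$ as a quadratic function in the real variables given by the real and imaginary parts of $\phi(v)$, $v\in V$, and to compute its critical equations. Writing $\phi = \phi_1 + i\phi_2$ with $\phi_1, \phi_2 : V \to \RR$, the functional splits as $P_\xi(\phi) = Q(\phi_1) + Q(\phi_2)$, where
$$
Q(\psi) = \sum_{e\in E^0}\frac{(\psi(e^+)-\psi(e^-))^2}{2l_e} - \sum_{e\in E^\infty}\xi_k(e)\,\psi(e^-)
$$
is a convex quadratic ($\xi_1, \xi_2$ denoting the real and imaginary parts of the boundary current, and the linear term coming from $\mathrm{Re}(\xi\cdot\overline\phi) = \xi_1\phi_1 + \xi_2\phi_2$). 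Since each summand in the quadratic part is a perfect square with positive coefficient, $Q$ is convex, so a potential minimizes $P_\xi$ if and only if it is a critical point; it then suffices to compute $\partial Q/\partial\psi(v) = 0$ for every $v\in V$.

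Next I would carry out that differentiation. Fix $v \in V$. The vertex $v$ appears in the term of a bounded edge $e \in E^0$ exactly when $v \in \partial e$, contributing $\pm(\psi(e^+) - \psi(e^-))/l_e$ depending on whether $v = e^+$ or $v = e^-$. Collecting these, the derivative of the quadratic part with respect to $\psi(v)$ equals
$$
\sum_{e:\ v=e^+}\frac{\psi(e^+)-\psi(e^-)}{l_e} \;-\; \sum_{e:\ v=e^-}\frac{\psi(e^+)-\psi(e^-)}{l_e}
= \sum_{e:\ v=e^+}\widetilde\xi(e) - \sum_{e:\ v=e^-}\widetilde\xi(e),
$$
using the definition $\widetilde\xi(e) = (\phi(e^+)-\phi(e^-))/l_e$ (applied componentwise). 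The derivative of the linear part is $-\xi_k(v)$ if $v$ is the endpoint $e^-$ of a leg and $0$ otherwise; but since every leg is oriented towards infinity, its finite endpoint $e^-$ lies in $V$ and we may think of $\widetilde\xi(e) = \xi(e)$ as the incoming current along that leg. Absorbing the leg contributions into the sums over edges incident to $v$ (a leg at $v$ is always outgoing from $v$ towards infinity, hence plays the role of a term with $v = e^-$), the full stationarity equation $\partial P_\xi/\partial\phi(v) = 0$ becomes precisely
$$
\sum_{e:\ v=e^-}\widetilde\xi(e) = \sum_{e:\ v=e^+}\widetilde\xi(e),
$$
which is the asserted balancing condition (doing both real and imaginary parts at once). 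Conversely, any $\phi$ satisfying these equations is a critical point of the convex function $P_\xi$, hence a minimizer.

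The main point requiring care — the only place where a genuine argument rather than bookkeeping is needed — is the convexity claim, i.e. that a critical point is actually a global minimum; this follows because the Hessian of the quadratic part is the (weighted) graph Laplacian, which is positive semidefinite, so every critical point minimizes. A secondary subtlety is the correct handling of the legs: one must check that the linear term $-\sum_{e\in E^\infty}\mathrm{Re}(\xi(e)\overline{\phi(e^-)})$ contributes exactly the ``missing'' term $\widetilde\xi(e) = \xi(e)$ at the finite endpoint of each leg, so that the balancing sum at $v$ ranges over \emph{all} edges incident to $v$, bounded and unbounded alike — this is the Neumann (free) boundary condition, and it is why the functional is set up with this particular linear term. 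Everything else is a routine differentiation and sign check.
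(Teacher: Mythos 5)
Your proof is correct and follows essentially the same route as the paper: both reduce minimization to criticality via convexity of the quadratic part (the weighted Laplacian being positive semidefinite), differentiate with respect to the real and imaginary parts of $\phi(v)$, and observe that the linear leg term supplies exactly the $\widetilde\xi(e)=\xi(e)$ contribution with $v=e^-$ so that the stationarity equation is the balancing condition over all edges. The only (harmless) difference is that you spell out the convexity argument slightly more explicitly than the paper does.
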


\begin{proof}
Note that the quadratic part of $P_\xi$ (see \eqref{eq: Neumann power functional}) is a real-valued
nonnegative quadratic form on $\CC^V=(\RR^2)^V$.
It follows that $\phi$ minimizes $P_\xi$ if and only if $\phi$ is its critical point. Denote
$\phi = \phi_1 + i\phi_2$, $\xi = \xi_1 + i\xi_2$ and $\widetilde{\xi} = \widetilde{\xi_1} + i\widetilde{\xi_2}$.
Then we have
$$
P_\xi(\phi) = \sum_{i=1}^2\left(
\frac{(\phi_i(e^+)-\phi_i(e^-))^2 }{2 l_e}
-\sum_{e\in E^\infty}\xi_i(e)\cdot\phi_i(e^-)\right)
$$
It follows that for any $v\in V$ and $i=1,2$ we have
$$
\begin{aligned}
\frac{\delta P_\xi}{\delta \phi_i(v)} =
& \sum_{\substack{{e\in E^0:}\\{\ v=e^+}}}\frac{\phi_i(e^+)-\phi_i(e^-)}{l_e}
-\sum_{\substack{{e\in E^0:}\\{\ v=e^-}}}\frac{(\phi_i(e^+)-\phi_i(e^-)}{l_e}
-\sum_{\substack{{e\in E^\infty:}\\{\ v=e^-}}}\xi_i(e) = \\
=& \sum_{e\in E:\ v=e^+}\widetilde{\xi_i}(e) - \sum_{e\in E:\ v=e^-}\widetilde{\xi_i}(e)\,.
\end{aligned}
$$
This proves the statement.
\end{proof}

\subsection{Graph Laplacian}
\label{subsec: Laplace}
Let us reformulate the criticality condition in terms of a weighted discrete Laplace operator on $\G$.
Denote by $C^0\cong\CC^V$ and $C^1\cong\CC^E$ spaces of $0$-cochains and $1$-cochains
of $\G$ with coefficients in $\CC$, respectively.
Define $d:C^0\to C^1$ and $d^*:C^1\to C^0$ by
$$(d\phi)(e)=
\begin{cases}
\frac{\phi(e^+)-\phi(e^-)}{l_e}, & \text{if}\ e\in E^0\\
0,\ & \text{if}\ e\in E^\infty
\end{cases}
\ \ \quad
(d^*\psi)(v)=\sum_{\substack{{e\in E:}\\{\ v=e^+}}}\psi(e) - \sum_{\substack{{e\in E:}\\{\ v=e^-}}}\psi(e).
$$
Also, consider a zero extension of $\xi$ from a function on $E^\infty$ to a 1-cochain in $C^1$ by
setting $\xi(e)=0$ for any $e\in E^0$; by abuse of notation denote this cochain by $\xi\in C^1$.
Consider a weighted discrete Laplace operator $\nabla^2:=d^*d:C^0\to C^0$.
The proof of Proposition \ref{prop: Neumann Minimum iff balanced} implies that $\phi\in C^0$
minimizes the Neumann power functional $P_\xi$ if and only if
\begin{equation}\label{eq:Discrete Neumann Problem}
\nabla^2(\phi) = - d^*\xi.
\end{equation}

We can consider $\xi$ as a normal derivative of $\phi$ along legs at boundary vertices
(i.e., vertices incident to legs). Therefore, \eqref{eq:Discrete Neumann Problem} can be seen
as a discrete version of a classical Neumann problem (explaining, in particular, the term
``Neumann power functional'').

In a basis of $C^0$ given by some ordering $V=\{v_1,\cdots, v_k\}$ of the set of vertices, the matrix of
$\nabla^2$ is equal to the usual weighted Laplacian matrix $\GL$ of $\G$, see \cite{GR,Baker-Faber}.
Its elements $\GL_{ij}$ are given by

$$
\GL_{ij} = \begin{cases}
\sum\limits_{e:\ v_i\in\partial e}l_e^{-1},\ &\text{if}\ i=j \\
-l_e^{-1},\ &\text{if}\ \partial e =\{v_i,v_j\} \\
0, &\text{otherwise}.
\end{cases}
$$
The matrix representation of $- d^*\xi$ in the same basis of $C^0$ is given by a column vector
$b$ with $b_i = \sum\xi(e)$, where the sum is over all $e\in E^\infty$ with $e^-= v_i$.
Thus the matrix representation of \eqref{eq:Discrete Neumann Problem} is given by
$$\GL\phi=b\,,$$
where $\phi$ is a column vector with entries $\phi(v_i)$.

\subsection{Existence and uniqueness}
\label{subsec: Existence and uniqueness}
Next, we shall prove the existence and uniqueness theorem for the discrete Neumann problem.

\begin{prop}\label{prop: Existence and uniqueness for the Neumann problem}
Let $(\G,\ell)$ be an abstract tropical curve. Given a boundary current $\xi$ on $\G$, there exists
a solution $\phi\in C^0$ to the discrete Neumann problem \eqref{eq:Discrete Neumann Problem}
if and only if $\sum_{e\in E^\infty}\xi(e) =0$.
Any two solutions differ by a constant function on $V$.
\end{prop}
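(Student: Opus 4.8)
The plan is to reduce everything to the linear algebra of the weighted Laplacian matrix $\GL$, using the matrix form $\GL\phi=b$ of \eqref{eq:Discrete Neumann Problem} recorded above. The two facts that drive the argument are that $\GL$ is a real \emph{symmetric positive semidefinite} matrix and that its kernel is exactly the line of constant functions on $V$.

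First I would record symmetry and semidefiniteness. Symmetry of $\GL$ is immediate from the explicit formula for $\GL_{ij}$, and a direct rearrangement of that formula (equivalently, twice the quadratic part of the Neumann power functional \eqref{eq: Neumann power functional}) gives
\[
\langle\GL\phi,\phi\rangle=\sum_{e\in E^0}\frac{|\phi(e^+)-\phi(e^-)|^2}{l_e}\ \ge\ 0 ,
\]
for every $\phi\in C^0\cong\CC^V$, where $\langle\cdot,\cdot\rangle$ is the standard Hermitian inner product on $C^0$. Since $\GL$ is Hermitian, $C^0$ splits orthogonally as $\ker\GL\oplus\operatorname{im}\GL$, so $\operatorname{im}\GL=(\ker\GL)^{\perp}$.

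Next I would identify the kernel. If $\GL\phi=0$ then $\langle\GL\phi,\phi\rangle=0$, so by the displayed identity $\phi(e^+)=\phi(e^-)$ for every bounded edge $e$; hence $\phi$ is constant on each connected component of $\G$. Since $\G$ is connected (as assumed throughout this section), $\ker\GL=\CC\cdot\mathbf 1$ is one–dimensional, and therefore $\operatorname{im}\GL=\{\,b\in C^0:\ \sum_{v\in V}b(v)=0\,\}$. Finally, I would compute this coordinate sum for our right–hand side: by definition $b_i=\sum\xi(e)$ over legs $e$ with $e^-=v_i$, and since every leg is oriented toward its univalent vertex, its unique endpoint lying in $V$ is $e^-$; thus each leg contributes to exactly one $b_i$, giving $\sum_{v\in V}b(v)=\sum_{e\in E^\infty}\xi(e)$. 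Consequently $\GL\phi=b$ is solvable iff $b\in\operatorname{im}\GL$ iff $\sum_{e\in E^\infty}\xi(e)=0$, and when solutions exist the difference of any two lies in $\ker\GL$, i.e. is a constant function on $V$.

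I do not expect a genuine obstacle here; the argument is standard Hodge/Laplacian bookkeeping. The one place where more than pure linear algebra enters is the kernel computation, where connectedness of $\G$ is essential — without it one gets a solvability condition per connected component and solutions differing by locally constant functions — so that is the step I would be most careful to state precisely, together with the orientation convention that makes the coordinate sum of $b$ equal the total boundary current $\sum_{e\in E^\infty}\xi(e)$.
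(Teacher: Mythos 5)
Your proof is correct and follows essentially the same route as the paper: both reduce to the linear system $\GL\phi=b$, identify $\ker\GL$ with the constants on the connected graph $\G$, and derive the solvability condition from the vanishing of $\sum_i b_i$. The only difference is that the paper cites the one-dimensionality of $\ker\GL$ as a known fact about (weighted) Laplacians, whereas you prove it directly from positive semidefiniteness and then use $\operatorname{im}\GL=(\ker\GL)^{\perp}$ — a slightly more self-contained version of the same argument.
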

\begin{proof}
We are looking for solutions of the linear system $\GL\phi=b$.
Note that by definition of $\GL$ the sum of coefficients in every row and column equals zero.
Therefore, if there is a solution to the system we have that $\sum_i b_i=0$, hence $\sum_{e\in E^\infty}\xi(e) =0$.

The opposite direction and the uniqueness follow from a well-known fact\footnote{See e.g.
\cite[Lemma 13.1.1]{GR} in the case of the standard Laplacian matrix; the same proof works
in the weighted case. For an alternative proof see \cite{Baker-Faber}.} that for a connected
graph $\G$ the kernel of $\GL$ is 1-dimensional and is spanned by $(1,\dots,1)^t$.
\end{proof}

\begin{cor}\label{cor: Uniqueness of extension}
For any abstract tropical curve $(\G,\ell)$ and a boundary current $\xi:E^\infty\to\CC\minus\{0\}$
with $\sum_{e\in E^\infty}\xi(e) =0$, there exists a corresponding \PTC\ $C$ with $\Delta$-set
$\Delta(C)=\{\xi(e_i)\}_{i=1}^n$. Such a curve $C$ is unique up to a translation in $\RR^2$.
For an irreducible abstract tropical curve $(\G,\ell)$ of genus zero the corresponding slope vectors
$\{\widetilde{\xi}(e)\}_{e\in E^0}$ on bounded edges do not depend on the metric $\ell$.
\end{cor}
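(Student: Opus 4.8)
The plan is to deduce the whole statement from the linear-algebra results of Subsection~\ref{subsec: Existence and uniqueness}, via the dictionary of Subsection~\ref{subsec: Neumann} that identifies a parameterized \PTC\ (with prescribed underlying abstract curve and $\Delta$-set) with a critical point of the Neumann power functional; the genus-zero part is instead a separate, purely combinatorial fact about trees.

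For existence I would first reduce to the case $\G$ connected, treating connected components separately. Given $\xi\colon E^\infty\to\CC\minus\{0\}$ with $\sum_{e\in E^\infty}\xi(e)=0$, Proposition~\ref{prop: Existence and uniqueness for the Neumann problem} produces a potential $\phi\colon V\to\CC$ solving the discrete Neumann problem~\eqref{eq:Discrete Neumann Problem}, and I would define $h\colon\G\to\RR^2=\CC$ by extending $\phi$ affinely along the edges: on a bounded edge $e$, parametrized by arc length $t\in[0,l_e]$ from $e^-$ to $e^+$, set $h=\phi(e^-)+t\,\widetilde{\xi}(e)$ with $\widetilde{\xi}(e)=\tfrac{\phi(e^+)-\phi(e^-)}{l_e}$, and on a leg $e$, parametrized by $t\in[0,\infty)$ from $e^-$ to infinity, set $h=\phi(e^-)+t\,\xi(e)$. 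I then need to check that $(\G,\ell,h)$ is a parameterized \PTC: the two prescriptions agree on $V$, so $h$ is well defined, continuous, and affine on each edge with $\xi_{e^-}(e)$ equal to $\widetilde{\xi}(e)$ on bounded edges and to $\xi(e)$ on legs, whence $\Delta(C)=\{\xi(e_i)\}_{i=1}^n$ and condition~(a) of Definition~\ref{def:PTC} holds since $\xi(e_i)\ne 0$; properness follows because the compact bounded part of $\G$ has bounded image while each leg, having nonzero slope, maps to a ray escaping to infinity, so the preimage of a ball is compact; and the balancing condition~\eqref{eq:balancing} at a vertex $v$, after substituting $\xi_v(e)=\pm\widetilde{\xi}(e)$, becomes exactly $\sum_{e:\,v=e^-}\widetilde{\xi}(e)=\sum_{e:\,v=e^+}\widetilde{\xi}(e)$, which holds for $\phi$ by Proposition~\ref{prop: Neumann Minimum iff balanced} (using that solving~\eqref{eq:Discrete Neumann Problem} is the same as minimizing $P_\xi$).

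For uniqueness I would run this dictionary in reverse: if $(\G,\ell,h')$ is any parameterized \PTC\ with the same underlying abstract curve and with $\Delta$-set $\{\xi(e_i)\}$, then the potential $\phi'=h'|_V$ has slope $\tfrac{\phi'(e^+)-\phi'(e^-)}{l_e}$ along each bounded edge, so its global current is the one attached to $\phi'$, and the balancing of $h'$ forces $\phi'$ to solve~\eqref{eq:Discrete Neumann Problem}; by Proposition~\ref{prop: Existence and uniqueness for the Neumann problem}, $\phi'=\phi+c$ for a constant $c\in\CC$, and since $h$ and $h'$ are affine on every edge with identical slopes and agree up to $c$ on $V$, they satisfy $h'\equiv h+c$ on all of $\G$; thus $h'$ is the composition of $h$ with the translation $z\mapsto z+c$, proving that $C$ is unique up to a translation of $\RR^2$.

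For the last assertion I would use that an irreducible genus-zero $(\G,\ell)$ is a tree. Fixing a bounded edge $e$, removing its interior disconnects $\G$ into two components; writing $V^+_e$ for the vertex set of the one containing $e^+$ and $E^\infty_+(e)=\{e_i\in E^\infty:e_i^-\in V^+_e\}$, summing~\eqref{eq:balancing} over all $v\in V^+_e$ makes every edge with both endpoints in $V^+_e$ cancel, leaves the contribution $\xi_{e^+}(e)=-\widetilde{\xi}(e)$ of $e$ (the unique bounded edge with exactly one endpoint in $V^+_e$, since $\G$ is a tree), and the contributions $\xi(e_i)$ of the legs in $E^\infty_+(e)$, giving
\[
\widetilde{\xi}(e)=\sum_{e_i\in E^\infty_+(e)}\xi(e_i),
\]
an expression involving only the combinatorial type of $\G$ and $\Delta(C)$, hence independent of $\ell$. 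I do not expect a real obstacle anywhere: the only slightly delicate points are verifying that the vertex potential of an \emph{arbitrary} parameterized \PTC\ is genuinely critical for $P_\xi$ (so that Proposition~\ref{prop: Existence and uniqueness for the Neumann problem} applies to it) and upgrading ``the potentials differ by a constant'' to ``the maps differ by a translation of $\RR^2$'', while the displayed tree identity is the one ingredient not already in Section~\ref{sec: varcalculus}.
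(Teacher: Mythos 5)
Your argument for existence and uniqueness is essentially the paper's: both deduce the statement from Proposition \ref{prop: Existence and uniqueness for the Neumann problem} by setting $h(v)=\phi(v)$ and observing that adding a constant to $\phi$ amounts to translating the curve; you merely spell out the routine verifications (properness, agreement of the edge and leg prescriptions on $V$, and the converse step needed for uniqueness, namely that the vertex potential of an \emph{arbitrary} parameterized curve with the given $\Delta$-set solves \eqref{eq:Discrete Neumann Problem}) that the paper leaves implicit. Where you genuinely diverge is the genus-zero claim. The paper argues by induction on $|E^0|$: it finds a leaf $v$ of the tree obtained by deleting the open legs, notes that balancing at $v$ determines $\widetilde{\xi}(e)$ on the unique bounded edge at $v$, then prunes $v$ together with its legs and recurses. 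You instead cut a fixed bounded edge $e$, sum the balancing conditions over all vertices of one of the two resulting components, and obtain the closed formula $\widetilde{\xi}(e)=\sum_{e_i\in E^\infty_+(e)}\xi(e_i)$; the cancellation you invoke is correct because $\xi_v(e')=-\xi_{v'}(e')$ for interior edges and $e$ is the unique bounded edge straddling the cut in a tree. Both routes are valid; yours buys an explicit expression for each slope as a partial sum of the $\Delta$-set (exactly the description the paper later uses in the proof of Lemma \ref{lem: 3-independent}, where the slopes at a vertex are written as $\eta_i=\sum_{k\in I_i}\xi_k$), while the paper's induction is marginally shorter and produces the slopes in an order compatible with the pruning/outer-flow picture used in later sections. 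Either way the independence of $\ell$ is immediate, since neither the recursion nor your partial sums involve the edge lengths.
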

\begin{proof}
Indeed, Proposition \ref{prop: Existence and uniqueness for the Neumann problem} implies that
there exists a solution $\phi$ to the discrete Neumann problem \eqref{eq:Discrete Neumann Problem}.
This solution defines a parametrization of $(\G,\ell)$ in the following way: for any vertex $v\in V$
we define $h(v)=\phi(v)\in\CC=\RR^2$. The balancing conditions for the resulting curve readily follow
from Proposition \ref{prop: Neumann Minimum iff balanced}. Translations of this curve in $\RR^2$
correspond to additions of constant functions to $\phi$, so the uniqueness follows from Proposition
\ref{prop: Existence and uniqueness for the Neumann problem}.

Now, let $(\G,\ell)$ be an irreducible abstract tropical curve of genus zero.
For $|E^0|=0$ the last statement is trivial. For $|E^0|>0$, note that there is always a vertex
$v$ with a unique adjacent bounded edge $e\in E^0$ with $v=e^+$. Indeed, remove all (open)
legs of $\G$; the remaining graph is a tree and hence has a leaf; take it to be $v$. The balancing
condition forces a unique extension $\widetilde{\xi}(e)$ of the boundary current $\xi$ to this edge.
Remove $v$ with all adjacent legs and make $e$ into a leg with the slope $\widetilde{\xi}(e)$.
The statement follows by induction on the number $|E^0|$ of bounded edges of $\G$.
\end{proof}

\subsection{Star-mesh transform}\label{susec: Star mesh}
The theory of electrical networks also suggests various interesting transformations of pseudotropical curves.
Let us briefly describe some of them leaving details to an interested reader.

Firstly, one can consider a more general class of underlying graphs, allowing bivalent vertices and/or multiple
edges. Namely, let $(\G,\ell,h)$ be a parameterized \PTC\ and $e$ be its bounded edge of length $l$.
Then one can replace $e$ by a pair of edges as shown in Figure \ref{fig: mult2valent}a (respectively,
Figure \ref{fig: mult2valent}b) with lengths $l'$, $l''$ so that $l=l'+l''$
($l^{-1}=l'\,^{-1}+l''\,^{-1}$, respectively).
The parametrization $h$ remains unchanged in case of a bivalent vertex of Figure  \ref{fig: mult2valent}a.
In case of a double edge of Figure \ref{fig: mult2valent}b, the parametrization $h$ remains unchanged on
all vertices and slopes $\xi_v(e')$, $\xi_v(e'')$ of the two new edges are defined by
$l'\cdot\xi_v(e')=l''\cdot\xi_v(e'')=l\cdot\xi_v(e)$.
Bivalent vertices can also be considered as marked points (see Section \ref{subsec: markedcurves}).

\begin{figure}[htb]
\includegraphics[width=5.0in]{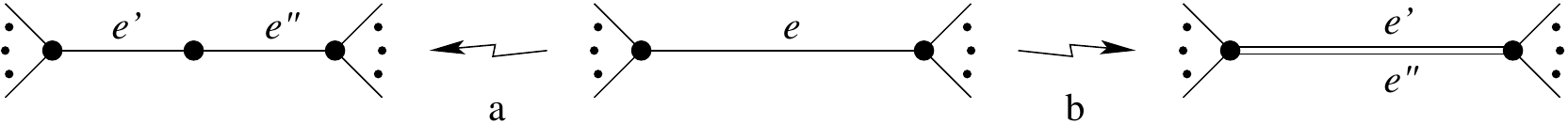}
\caption{Graphs with bivalent vertices and multiple edges.
\label{fig: mult2valent}}
\end{figure}

Secondly, one can apply the following celebrated {\em star-mesh transform}, see Figure \ref{fig: starmesh}a.
Let $(\G,\ell,h)$ be a parameterized \PTC\ and $v$ be its vertex not incident to legs. Let $v_i$, $i=1,\dots,s$
be the set of its neighboring vertices and $l_{i}$, $i=1,\dots,s$ be the set of lengths of corresponding edges
$e_{0i}$ connecting $v$ with $v_i$.
Replace the star of $v$ by a complete graph $K_s$ on the vertices $v_i$, $i=1,\dots,s$ with lengths of new
edges $e_{ij}$, connecting $v_i$ with $v_j$, given by
$\displaystyle l_{ij}=\frac{l_i\cdot l_j}{(l_1+l_2\dots+l_s)}$.
The parametrization $h$ remains unchanged on all vertices (except $v$), and slopes $\xi_{v_i}(e_{ij})$ are
defined by $l_{ij}\cdot\xi_{v_i}(e_{ij})=h(v_j)-h(v_i)$.
The proof follows from the Schur complement identity applied to the Laplace matrix $\GL$ of $\G$.

\begin{figure}[htb]
\includegraphics[width=5.0in]{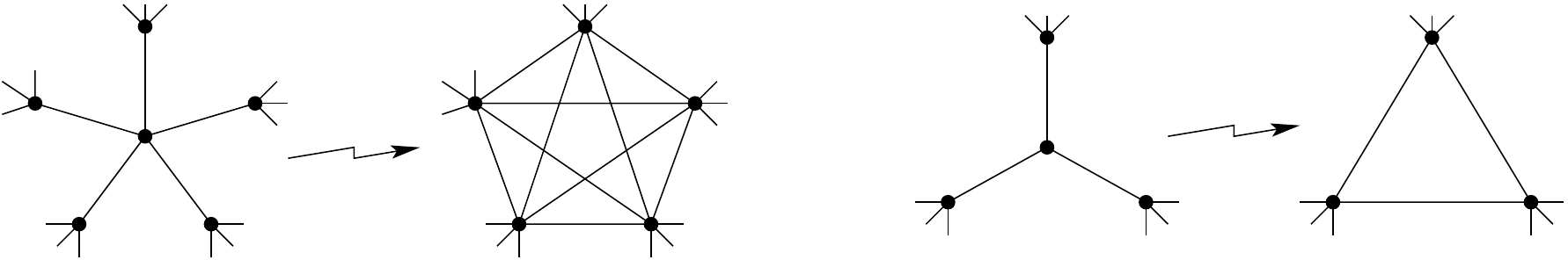}\\
\ a\hspace{2.7in}b
\caption{Star-mesh and $Y$-$\Delta$ transforms.
\label{fig: starmesh}}
\end{figure}

In particular, for $s=2$ we recover the transformation of a bivalent vertex considered above.
For $s=3$ this transformation is known also as a $Y$-$\Delta$ transform, see Figure \ref{fig: starmesh}b.
It preserves the number of bounded edges and thus has an inverse, known as a $\Delta$-$Y$ transform.
The corresponding formulas are
$$l_i=\frac{l_{ij}\cdot l_{ik}}{l_{12}+l_{23}+l_{31}}\,,\qquad
\xi_{v_i}(e_{0i})=\xi_{v_i}(e_{ij})+\xi_{v_i}(e_{ik})\,,$$
where $(ijk)$ is a cyclic permutation of $(123)$.
For $s>3$ the star-mesh transformation is not invertible in general without additional constraints on values
of $l_{ij}$.

\section{Dual polygons and intersections of pseudotropical curves}
\label{sec: duality}

\subsection{Dual polygon and \PTCs}
\label{subsec: duality}
We shall generalize to a pseudotropical case the notion of a dual polygon with the dual subdivision
corresponding to a tropical curve.
Let $C = [(\G, \ell, h)]$ be a \PTC\ with a $\Delta$-set $\Delta(C)$.
We orient all legs of $\G$ towards infinity and orient all bounded edges of $\G$ arbitrarily,
see Section \ref{subsec: definitions} for conventions.
In the case when the immersion $h:\G\to\RR^2$ is sufficiently general, i.e. self-intersection points
of $h(\G)$ are isolated, let $G$ be a graph, obtained from $h(\G)$ by considering all nodes
(i.e., self-intersection points) of $h(\G)$ as vertices.
New vertices subdivide corresponding edges of $h(\G)$ so that the resulting graph is planar.
Orientations, slope vectors $\xi(e)$ and lengths of new edges are inherited from the original
edges in an obvious way.
Since new vertices are balanced, $G$ can be considered as a \PTC\ with the same $\Delta$-set
and mapping $h$ (but possibly of a higher genus). See Figure \ref{fig: pinched graph}.

\begin{figure}[htb]
\includegraphics[height=1.2in]{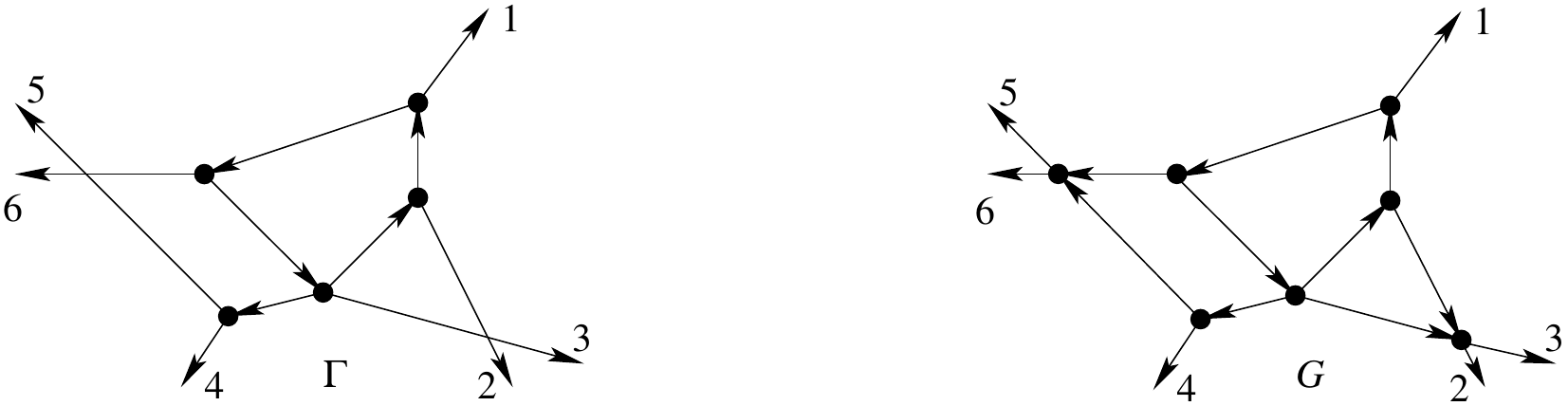}
\caption{Transforming nodes into vertices.
\label{fig: pinched graph}}
\end{figure}

In the degenerate case when the immersion $h:\G\to\RR^2$ has non-isolated self-intersections, a similar
construction of $G$ requires additional steps.
Non-isolated self-intersections appear when images of edges/legs intersect along intervals, see Figure
\ref{fig: degenerate immersion}a.
In this case to construct the graph $G$ we treat isolated self-intersection points as above, transforming them
to vertices; as for self-intersections along intervals, we apply transformations of addition/removal of 2-valent
vertices and merging of multiple edges into single edges, as described in Section \ref{susec: Star mesh}
(see Figure \ref{fig: mult2valent}). See Figure \ref{fig: degenerate immersion}b.  Edges of the resulting graph
$G$ are equipped with orientations, slope vectors $\xi(e)$ and lengths according to formulas of Section
\ref{susec: Star mesh}, so that all vertices of $G$ are balanced.

\begin{figure}[htb]
\includegraphics[width=5in]{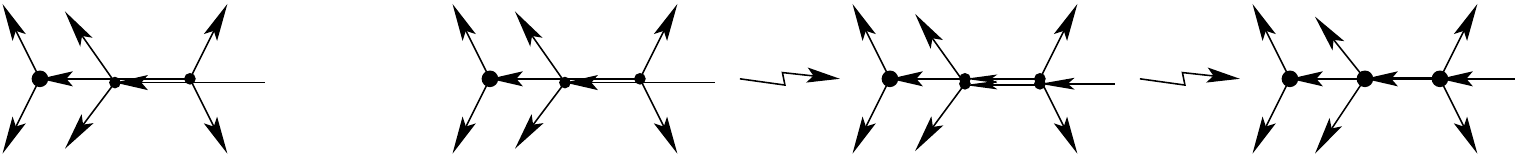}
\caption{Degenerate immersions.
\label{fig: degenerate immersion}}
\end{figure}

Denote by $G^*$ a planar graph dual to $G$, i.e. a graph obtained by placing a vertex in each region of $G$
(i.e., a connected component of $\RR^2\minus G$) and, if closures of two regions of $G$ share a common
edge $e$, connecting the corresponding vertices of $G^*$ by an edge $e^*$ which intersects only $e$.
Regions of $G^*$ correspond to vertices of $G$.
A problem of realizing $G^*$ by a {\em reciprocal} of $G$, i.e., a graph with straight edges orthogonal to
these of $G$ is well-studied; the standard approach involves the Maxwell-Cremona lifting. Since in our case
$G$ is already realized as a balanced (a.k.a. {\em stressed}) graph, the procedure is quite simple.

We define a dual (complete) metric $\ell^*$ on $G^*$ by setting $l^*_{e^*}=l_e^{-1}$ for every bounded
edge $e$ of $G$ and $l^*_{e^*}=1$ for every leg $e$ of $G$.
Also, we equip every edge $e^*$ of $G^*$ with a dual slope vector $\xi^*(e^*)$
obtained from $\xi$ by rotating it counterclockwise by $90$-degrees and rescaling:
$\xi^*(e^*):= (l^*_{e^*})^{-1}\cdot\sqrt{-1}\cdot\xi(e)$ (so that
$\xi^*(e^*)= l_e\cdot\sqrt{-1}\cdot\xi(e)$ for $e\in E^0$ and
$\xi^*(e^*)=\sqrt{-1}\cdot\xi(e)$ for $e\in E^\infty$).
Note that the metric $\ell^*$ and the rescaling factor in the definition of $\xi^*$ are chosen so, that
the resulting collection of slope vectors satisfies the balancing condition at every vertex of $G^*$ which
is dual to a bounded region.

We shall define a map $h^*:G^*\to\CC\cong\RR^2$ as follows.
Fix a spanning rooted tree $T$ in $G^*$.
Define $h^*$ arbitrarily (say, by 0) in the root vertex $v^*_0$.
For any other vertex $v^*$ of $G^*$ there exists a unique 1-chain $C(v^*)=\sum_{e^*\in C(v)} e^*$ of
oriented edges $e^*$ of $T$ such that $\partial C(v^*)=v^*-v^*_0$. Define
$$h^*(v^*)=\sum_{e^*\in C(v^*)} l^*_{e^*}\cdot \xi^*(e^*)=
\sum_{e^*\in C(v^*)} \sqrt{-1}\cdot\xi(e)\,.$$
Finally, extend $h^*$ to an affine map $h^*:G^*\to\RR^2$.
Note that the image $h^*(G^*)$ does not depend on a metric $\ell$ on $\G$.
See Figure \ref{fig: Newton polygon}.

\begin{figure}[htb]
\includegraphics[width=5in]{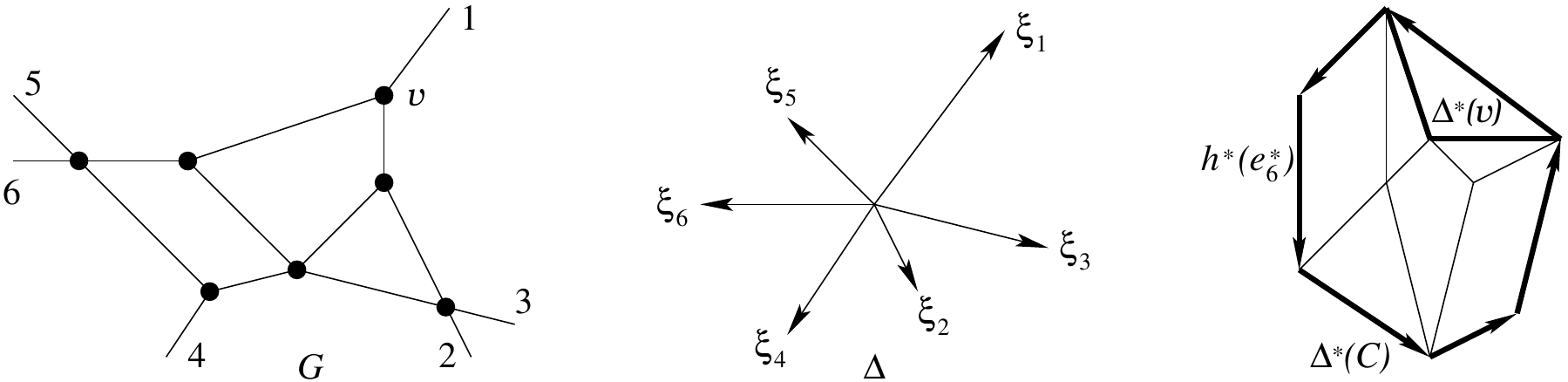}
\caption{Newton polygon $h^*(G^*)$.
\label{fig: Newton polygon}}
\end{figure}

\begin{prop}\label{prop: injectivity of dual parametrization}
The map $h^*:G^*\to\RR^2$ does not depend on the choice of a rooted tree $T$ up to a translation
in $\RR^2$.
Its image $h^*(G^*)$ forms a convex polygon with a convex subdivision.
\end{prop}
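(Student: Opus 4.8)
The plan is to establish the two claims separately: first the well-definedness of $h^*$ up to translation (independence of the rooted tree $T$), and then the convexity of the image together with the convexity of the induced subdivision.

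For the first part, I would argue that the value $h^*(v^*)$ is essentially independent of the path used to reach $v^*$, because the ``differential'' $\sqrt{-1}\cdot\xi(e)$ assigned to each dual edge $e^*$ is \emph{closed} in an appropriate sense: the balancing condition at every vertex of $G^*$ dual to a bounded region (noted right before the definition of $h^*$) says precisely that the sum of the slope contributions around the boundary of each bounded face of $G^*$ vanishes. Concretely, if $T$ and $T'$ are two spanning rooted trees with the same root, then for each vertex $v^*$ the difference of the two $1$-chains $C(v^*)$ and $C'(v^*)$ is a $1$-cycle in $G^*$, hence a $\ZZ$-combination of boundaries of bounded faces; summing $\sqrt{-1}\cdot\xi(e)$ over such a boundary gives $0$ by balancing. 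Thus $h^*(v^*)$ is the same for both trees, and changing the root merely adds a global constant, i.e. translates $h^*$ in $\RR^2$. I would then note that extending affinely over each edge $e^*$ of $G^*$ is consistent because, by construction, $h^*(v^*_{+})-h^*(v^*_{-}) = \sqrt{-1}\cdot\xi(e)$ holds for \emph{every} edge $e^*$, not just those of $T$ (again by the cycle argument applied to the cycle formed by $e^*$ together with the tree path connecting its endpoints).

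For the second part — that $h^*(G^*)$ is a convex polygon carrying a convex subdivision — the strategy is the Maxwell–Cremona correspondence, which the text has already flagged. Since $G$ is realized in the plane as a balanced (stressed) graph, the standard lifting produces a piecewise-linear convex function $F:\RR^2\to\RR$ whose domains of linearity are exactly the regions of $G$ and whose gradient on the region $R_{v^*}$ (dual to the vertex $v^*$ of $G^*$) equals $h^*(v^*)$; crossing an edge $e$ of $G$ changes the gradient by exactly $\sqrt{-1}\cdot\xi(e)$, which matches the definition of $h^*$. The convexity of $F$ forces the gradients $\{h^*(v^*)\}$, one per face of $G$, to be the vertices of a convex polygon — namely the Newton polygon — and the adjacency pattern (two gradients joined by the dual edge $e^*$ whenever the corresponding faces of $G$ share the edge $e$) is precisely the lower/upper convex subdivision of that polygon dual to the subdivision of the plane by $G$. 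I would spell out the sign/orientation conventions so that the $90^\circ$ rotation in the definition of $\xi^*$ is the one that makes $h^*$ the gradient map of $F$ rather than its negative.

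The main obstacle I anticipate is handling the \emph{unbounded} regions of $G$ and the legs: the balancing condition as stated only holds at vertices of $G^*$ dual to bounded regions, so the closedness argument for $h^*$ must be confined to cycles supported on bounded faces, and one must check that the unbounded faces of $G$ contribute exactly the ``corners at infinity'' that close up the Newton polygon — equivalently, that $\sum_{i=1}^n \sqrt{-1}\cdot\xi(e_i)=0$ (which follows from $\sum_i\xi(e_i)=0$, a property of any $\Delta$-set). A secondary technical point is the degenerate case where $h$ has non-isolated self-intersections: there one must verify that the $2$-valent-vertex and multiple-edge reductions of Section~\ref{susec: Star mesh} do not alter $h^*(G^*)$, which is immediate since those transformations leave the planar picture of $G$ (as a subset of $\RR^2$) and all slope vectors unchanged up to subdivision.
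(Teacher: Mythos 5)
Your argument for the first claim is essentially the paper's: both reduce path-independence of $h^*$ to the vanishing of $\sum \sqrt{-1}\cdot\xi(e)$ around the boundary of each bounded face of $G^*$, and both obtain that vanishing from balancing. One correction of attribution: that vanishing is the balancing condition at the vertex $v$ of $G$ dual to the face in question, not the ``balancing condition at every vertex of $G^*$ dual to a bounded region'' that you quote from the text before the definition of $h^*$ --- faces of $G^*$ correspond to vertices of $G$, while vertices of $G^*$ correspond to regions of $G$, and these are two different statements. The computation you intend is the right one, but the reference should be fixed.

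For the second claim you take a genuinely different route, and as written it has a gap at the decisive step. The paper never constructs a lift: it shows directly that each cell $\Delta^*(v)$ is convex (the arguments of the vectors $\xi_v(e)$, read counterclockwise around $v$, are monotone on the circle), that the outer perimeter maps to a convex polygon $\Delta^*(C)$ (the rotated $\Delta$-set vectors sum to zero), and then proves the tiling property by computing that the $2$-chain $\sum_v\Delta^*(v)$ covers each point off $h^*(G^*)$ with multiplicity $I\bigl(\partial\Delta^*(C),-[p,\infty]\bigr)\in\{0,1\}$, because interior dual edges cancel in $\partial\bigl(\sum_v\Delta^*(v)\bigr)$. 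Your route instead rests on the assertion that ``the standard lifting produces a piecewise-linear convex function $F$.'' But Maxwell--Cremona produces a lift from any self-stress; its \emph{convexity} is not automatic, depends on signs, and is essentially equivalent to the proposition being proved --- so this is the point that must be argued, not cited. To close the gap you would check that across every edge $e$ of $G$ the gradient jump $\sqrt{-1}\cdot\xi(e)$ is a positive multiple of the normal to $h(e)$ in the direction of crossing (exactly the sign convention you defer), and then use that a continuous PL function which is locally convex across every codimension-one face of its linearity decomposition is globally convex; Legendre duality then yields the regular subdivision. One further slip: not every gradient $h^*(v^*)$ is a vertex of $\Delta^*(C)$ --- those attached to bounded regions of $G$ may land in its interior --- so ``the gradients are the vertices of a convex polygon'' should read ``the gradients are the $0$-cells of a convex subdivision of $\Delta^*(C)$.'' Your handling of the unbounded regions and of the degenerate immersions is fine.
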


\begin{proof}
Recall that we have $\xi(-e)=-\xi(e)$ (and thus $\xi^*(-e^*)=-\xi^*(e^*)$), so when we move the root of
$T$ to a neighboring vertex along an edge $e^*$ of $G^*$ the map $h^*$ changes by an additive constant
$\pm\sqrt{-1}\cdot\xi(e)$, i.e., by a translation in $\RR^2$.
To prove an independence of $h^*$ on a choice of the tree $T$ it suffices to prove that for any closed
oriented path $Z$ of edges in $G^*$ one has $\sum\limits_{e^*\in Z} l^*_{e^*}\cdot \xi^*(e^*)=0$.
But cycles in $G^*$ are generated by boundaries of regions of the graph $G^*$; for such a positively
(i.e., counterclockwise) oriented cycle $Z(v)$ along the boundary of the region $R(v)$ dual to a vertex $v$
of $G$ we have
$$\sum_{e^*\in Z(v)}  l^*_{e^*}\cdot \xi^*(e^*) =\sum_{e^*\in Z(v)} \sqrt{-1}\cdot\xi(e)
=\sqrt{-1}\cdot \sum_{e: v\in\partial e} \xi_v(e)=0 $$
due to the balancing condition in the vertex $v$ of $G$.
Moreover, the image under $h^*$ of the  boundary of the region $R(v)$ forms a closed convex polygon;
indeed, as we follow a sequence of edges $e^*$ along $\partial R(v)$ counterclockwise, arguments of
the corresponding vectors $\xi_v(e)$ of dual edges meeting in the vertex $v$ (and hence arguments of
$\xi^*(e^*)$) are non-decreasing in $S^1$.
Denote such polygon (with a counterclockwise orientation, inherited from the positive orientation of
$\partial R(v)$) by $\Delta^*(v)$, see Figure \ref{fig: Newton polygon}.

Similarly, note that the image under $h^*$ of the ``outer perimeter'' of $G^*$ (i.e. the set of all vertices
of $G^*$ dual to unbounded regions of $G$ and edges $\{ e_i^*\}_{i=1}^m$ connecting them) form
a closed convex polygon.
Indeed, up to translations, images $h^*(e_i^*)$ are obtained from vectors $\xi(e_i)$,
$i=1,2,\dots,m$ of $\Delta(C)$ simply by a $90$-degrees counterclockwise rotation.
Since $\sum_{i=1}^m\xi(e_i)=0$, segments $h^*(e_i^*)$ form a unique (up to translations) closed
convex polygon. Note also, that since all legs are oriented towards infinity, this polygon inherits the
counterclockwise orientation. Denote it by $\Delta^*(C)$. See Figure \ref{fig: Newton polygon}.

It suffices to prove that each point $p$ in $\RR^2\minus h^*(G^*)$ is covered by at most one such
polygon $\Delta^*(v)$, so interiors of different polygons $\Delta^*(v)$ do not intersect and define a
convex subdivision of $\Delta^*(C)$.
Obviously, if a point - let's denote it by $\infty$ - is chosen sufficiently far from $h^*(G^*)$ then it is
not covered by any such polygon $\Delta^*(v)$.
Since all $\Delta^*(v)$ are counterclockwise oriented, the number of polygons covering $p$ equals
to the intersection number $I\left(\sum\limits_{v}\Delta^*(v), [p]-[\infty]\right)$ of the 2-chain
$\sum_{v}\Delta^*(v)$ with the 0-chain $[p]-[\infty]$, where the summation is over all vertices of $G$.
Alternatively, this number can be calculated as the intersection number
$I\left(\partial(\sum\limits_{v}\Delta^*(v)), -[p,\infty]\right)$ of the 1-chain
$\partial(\sum_{v}\Delta^*(v))$ with a 1-chain $-[p,\infty]$ represented by a generic simple path connecting
$p$ and $\infty$. Note that for any bounded edge $e$ of $G$ with $\partial e = e^+-e^-$ the corresponding
dual edge $e^*$ enters in the boundary $\partial(\Delta^*(e^{\pm}))$ of two neighboring polygons
$\Delta^*(e^{\pm})$ with opposite orientations, so $\partial(\sum_{v}\Delta^*(v))=\partial\Delta^*(C)$.
Thus $$I(\sum_{v}\Delta^*(v), [p]-[\infty])=I(\partial\Delta^*(C), -[p,\infty])\in\{0,1\}$$ depending on
whether $p$ lies outside or inside $\Delta^*(C)$ and the statement follows.
\end{proof}

In the pseudotropical setting, the convex polygon $\Delta^*(C)$ together with its subdivision as in the
proof above (see Figure \ref{fig: Newton polygon}) plays the role of the Newton polygon with a subdivision
corresponding to a tropical curve in a toric surface.
Also, it is easy to define a degree of a \PTC\ that recovers the degree of algebraic curves in $\PP^2$,
i.e. curves with the Newton polygon that is the convex hull of the set $\{(0,0), (d,0), (0,d)\}$.
However, unlike in the standard tropical setting, the degree $\deg(C) $ may not be an integer:

\begin{defn}\label{def: dual polygon and degree}
Let $C = [(\G, \ell, h)]$ be a \PTC\ with a $\Delta$-set $\Delta(C)$. A convex polygon
$\Delta^*(C)$ bounded by the outer perimeter of $h^*(G^*)$ is called the dual polygon of $C$.
The convex subdivision of $\Delta^*(C)$ given by $\Delta^*(v)$ for all vertices $v$ of $G$ is
called the dual subdivision of $C$. Note that
\begin{equation}
\label{eq:area of Delta}
Area(\Delta^*(C))=\sum_{v}  Area(\Delta^*(v))\,,
\end{equation}
where the summation is over all vertices of $G$ and $Area(A)$ is the Euclidean area of $A$.
The \emph{degree} $\deg(C)$ of $C$ is
\begin{equation}
\label{eq:degree of a pseudotropical curve}
\deg(C) :=\big(2 Area(\Delta^*(C))\big)^\frac12=\Big(2 \sum_{v}  Area(\Delta^*(v))\Big)^\frac12\,.
\end{equation}
\end{defn}

\subsection{Bernstein's and B\'{e}zout's Theorems for \PTCs}
Classical theorems of algebraic geometry have their tropical analogues.
Many of them can be generalized to the case of \PTCs\ in a straightforward manner.
Below we provide generalizations of tropical B\'{e}zout's and Bernstein's theorems
(see \cite[Theorem 4.6.8]{M-St} and \cite[Theorem 9.5]{St}).

\begin{defn}\label{def: Local intersection multiplicity}
Let $C_1 = [(\G_1, \ell_1, h_1)]$ and $C_2 = [(\G_2, \ell_2, h_2)]$ be two \PTCs.
We say that $C_1$ and $C_2$ are in {\em general position} with respect to each other, if $h_1(\G_1)$ and $h_2(\G_2)$ intersect each other transversally in inner points of edges.
Let $p\in h_1(\G_1)\cap h_2(\G_2)$ be an intersection point of two edges $h_1(e_1)$ and $h_2(e_2)$.
Define the local intersection multiplicity $I_p(C_1,C_2)$ of $C_1$ and $C_2$ in $p$ as

\begin{equation}
\label{eq:Local intersection multiplicity}
I_p(C_1,C_2) = |\det(\xi(e_1), \xi(e_2))|
\end{equation}
and the {\em total intersection multiplicity} of $C_1$ and $C_2$ as
$$I(C_1,C_2)=\sum_{p\in h_1(\G_1)\cap h_2(\G_2)}I_p(C_1,C_2)\,.$$
\end{defn}

Recall the notions of Minkowski sum and mixed area.
Let $A,B\subset\RR^2$ be two compact convex sets. The {\em Minkowski sum} $A\boxplus B$ of $A$ 
and $B$ is a compact convex set in $\RR^2$ defined by $$A\boxplus B = \{a + b|a\in A, b\in B\}.$$
In particular, if both $A$ and $B$ are convex polygons, $A\boxplus B$ is a convex polygon with the 
set of edges obtained by merging sets of edges of $A$ and $B$.
The {\em mixed area} $Area(A,B)$ of $A$ and $B$ is given by
\begin{equation}\label{eq: mixed area}
Area(A,B)= \frac12(Area(A\boxplus B)-Area(A)-Area(B)).
\end{equation}

\begin{thm}\label{thm: Bersnstein-Bezout Theorem}
Let $C_1 = [(\G_1, \ell_1, h_1)]$ and $C_2 = [(\G_2, \ell_2, h_2)]$ be two \PTCs\ in general position.
Then
$$I(C_1,C_2) = 2 Area(\Delta^*(C_1), \Delta^*(C_2))\ge\deg(C_1)\cdot\deg(C_2)\,.$$
Moreover, $I(C_1,C_2) = \deg(C_1)\cdot\deg(C_2)$ if and only if the dual polygons $\Delta^*(C_1)$ and
$\Delta^*(C_2)$ are homothetic\footnote{Here we consider polygons $\Delta^*(C_1)$ and $\Delta^*(C_2)$
as the convex hull of their points.}.
\end{thm}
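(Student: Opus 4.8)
The plan is to reduce the theorem to two classical facts from convex geometry: first, that for a tropical (or pseudotropical) curve the total intersection number equals twice the mixed area of the dual polygons, and second, the Minkowski inequality $Area(A,B) \ge \sqrt{Area(A)\cdot Area(B)}$ with equality precisely when $A$ and $B$ are homothetic. Since $\deg(C_i) = \sqrt{2\,Area(\Delta^*(C_i))}$ by Definition \ref{def: dual polygon and degree}, the chain $2\,Area(\Delta^*(C_1),\Delta^*(C_2)) \ge 2\sqrt{Area(\Delta^*(C_1))Area(\Delta^*(C_2))} = \sqrt{2Area(\Delta^*(C_1))}\cdot\sqrt{2Area(\Delta^*(C_2))} = \deg(C_1)\deg(C_2)$ will follow immediately once the identity $I(C_1,C_2) = 2\,Area(\Delta^*(C_1),\Delta^*(C_2))$ is in place, and the equality case will follow from the equality case of Minkowski's inequality.

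So the real content is the identity $I(C_1,C_2) = 2\,Area(\Delta^*(C_1),\Delta^*(C_2))$, and I would prove it exactly as in the tropical case, using the superposition curve. First I would form $C_1 \cup C_2$ as a pseudotropical curve (after a generic translation so that $h_1(\G_1)$ and $h_2(\G_2)$ meet transversally in interiors of edges, as in Definition \ref{def: Local intersection multiplicity}), subdividing at the new intersection points so every vertex is balanced; by the construction of Section \ref{subsec: duality} this curve has a dual polygon $\Delta^*(C_1 \cup C_2)$. Because the $\Delta$-set of the union is the concatenation of $\Delta(C_1)$ and $\Delta(C_2)$, the outer perimeter of the union's dual graph assembles into the Minkowski sum, so $\Delta^*(C_1\cup C_2) = \Delta^*(C_1)\boxplus\Delta^*(C_2)$ up to translation. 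Next I would compare areas via \eqref{eq:area of Delta}: the vertices of $C_1\cup C_2$ are of three types — old vertices of $C_1$, old vertices of $C_2$, and new crossing points $p$ — contributing, respectively, cells that are translates of the $\Delta^*(v)$ for $C_1$, translates of the $\Delta^*(v)$ for $C_2$, and, at each crossing of edges $e_1,e_2$, a parallelogram on the vectors $\sqrt{-1}\,\xi(e_1)$ and $\sqrt{-1}\,\xi(e_2)$ whose area is $|\det(\xi(e_1),\xi(e_2))| = I_p(C_1,C_2)$. Summing and invoking \eqref{eq: mixed area},
\[
2\,Area(\Delta^*(C_1),\Delta^*(C_2)) = Area(\Delta^*(C_1\boxplus C_2)) - Area(\Delta^*(C_1)) - Area(\Delta^*(C_2)) = \sum_{p} I_p(C_1,C_2) = I(C_1,C_2).
\]

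The main obstacle I anticipate is the bookkeeping in the area comparison: one must check that in the dual subdivision of $\Delta^*(C_1\cup C_2)$ the cells really do split cleanly into translated copies of the cells of the two individual subdivisions plus one crossing parallelogram per intersection point, with no overlap and no gap. Concretely, the dual slope vector at an edge depends on the ambient $\Delta$-set, so when an edge of $C_1$ is subdivided by a crossing with $C_2$ the two halves carry the same $\xi$; I would track this using the formula $h^*(v^*) = \sum \sqrt{-1}\,\xi(e)$ along a path in the dual spanning tree and the fact (Proposition \ref{prop: injectivity of dual parametrization}) that $h^*$ is path-independent up to translation, so the cell of an old vertex $v$ of $C_1$ in $\Delta^*(C_1\cup C_2)$ differs from its cell in $\Delta^*(C_1)$ only by a translation determined by the ``winding'' of $C_2$'s edges. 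This is precisely the argument behind the tropical Bernstein theorem (cf. \cite[Theorem 4.6.8]{M-St}), and since all the ingredients — balancing at every vertex, the dual-polygon construction, and the area additivity \eqref{eq:area of Delta} — hold verbatim for pseudotropical curves without any integrality assumption, the argument carries over with only notational changes. Finally, for the equality clause I would just cite the equality case of the Minkowski inequality for convex bodies in the plane: $Area(A,B)^2 = Area(A)Area(B)$ iff $A$ and $B$ are homothetic, applied to $A = \Delta^*(C_1)$, $B = \Delta^*(C_2)$.
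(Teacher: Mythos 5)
Your proposal follows essentially the same route as the paper: form the union curve $C_1\sqcup C_2$, identify its dual polygon with the Minkowski sum $\Delta^*(C_1)\boxplus\Delta^*(C_2)$, observe that the doubled mixed area is exhausted by the parallelograms dual to the crossing points (each of area $I_p(C_1,C_2)$), and finish with Minkowski's first inequality and its equality case. The only difference is that you spell out the cell-by-cell bookkeeping in the dual subdivision, which the paper's proof leaves implicit; this is a correct and welcome elaboration, not a different argument.
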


\begin{proof}
Let $C_1\sqcup C_2=[(\G_1\sqcup \G_2, \ell_1\sqcup\ell_2, h_1\sqcup h_2)]$ be a (reducible) \PTC.
Since $\Delta(C_1\sqcup C_2)=\Delta(C_1)\sqcup\Delta(C_2)$, the dual polygon $\Delta^*(C_1\sqcup C_2)$
is the Minkowski sum $\Delta^*(C_1)\boxplus\Delta^*(C_2)$ of two dual polygons $\Delta^*(C_1)$ and
$\Delta^*(C_2)$.
From \eqref{eq: mixed area} and \eqref{eq:area of Delta} it follows that the doubled mixed area
$2 Area(\Delta^*(C_1), \Delta^*(C_2))$ is summed up from the areas of parallelograms
$\Delta^*(p)\subset\Delta^*(C_1\sqcup C_2)$ dual to the intersection points $p\in h_1(\G_1)\cap h_2(\G_2)$. 
Since $Area(\Delta^*(p)) = I_p(C_1,C_2)$ by \eqref{eq:Local intersection multiplicity}, the equality
$I(C_1,C_2) = 2 Area(\Delta^*(C_1), \Delta^*(C_2))$
follows.
The rest follows from Minkowski's first inequality\footnote{Minkowski's first inequality follows from
the celebrated Brunn-Minkowski theorem, see \cite[Theorem 7.1.1]{Sch}; its higher dimensional
generalization is the Alexandrov-Fenchel inequality, see \cite[Theorem 7.3.1]{Sch}} which states that
$$
Area(A,B)\geq \big(Area(A)\cdot Area(B)\big)^\frac12
$$
and the equality holds if and only if $A$ and $B$ are homothetic, see \cite[Theorem 7.2.1]{Sch}.

\end{proof}

\begin{cor}
If the dual polygons of \PTCs\ $C_1$ and $C_2$ are homothetic, then
$\deg(C_1\sqcup C_2) = \deg(C_1)+\deg(C_2)$.
\end{cor}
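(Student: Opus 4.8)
The plan is to reduce the corollary to a statement about areas of Minkowski sums, using two facts already at hand: the identification $\Delta^*(C_1\sqcup C_2)=\Delta^*(C_1)\boxplus\Delta^*(C_2)$ established inside the proof of Theorem \ref{thm: Bersnstein-Bezout Theorem}, and Definition \ref{def: dual polygon and degree}, which gives $\deg(C)=\big(2\,Area(\Delta^*(C))\big)^{1/2}$ for any \PTC\ $C$.

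First I would set $A=\Delta^*(C_1)$ and $B=\Delta^*(C_2)$. Since $A$ and $B$ are homothetic, we may write $B=\lambda A+v$ for some real $\lambda\ge 0$ and some $v\in\RR^2$; the translation $v$ plays no role for areas. Then $A\boxplus B=A\boxplus(\lambda A+v)=(1+\lambda)A+v$ is again a homothet of $A$, so $Area(A\boxplus B)=(1+\lambda)^2\,Area(A)$, while $Area(B)=\lambda^2\,Area(A)$. Taking square roots (all areas are nonnegative) gives $\sqrt{Area(A\boxplus B)}=(1+\lambda)\sqrt{Area(A)}=\sqrt{Area(A)}+\sqrt{Area(B)}$. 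Multiplying through by $\sqrt 2$ and invoking the degree formula yields $\deg(C_1\sqcup C_2)=\big(2\,Area(A\boxplus B)\big)^{1/2}=\big(2\,Area(A)\big)^{1/2}+\big(2\,Area(B)\big)^{1/2}=\deg(C_1)+\deg(C_2)$, which is the claim. Alternatively, one may avoid introducing $\lambda$ altogether: from $Area(A\boxplus B)=Area(A)+2\,Area(A,B)+Area(B)$ and the equality case of Minkowski's first inequality recorded in Theorem \ref{thm: Bersnstein-Bezout Theorem}, homothety gives $Area(A,B)=\big(Area(A)\,Area(B)\big)^{1/2}$, hence $2\,Area(A\boxplus B)=\deg(C_1)^2+2\deg(C_1)\deg(C_2)+\deg(C_2)^2=\big(\deg(C_1)+\deg(C_2)\big)^2$, and extracting the nonnegative square root finishes the argument.

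There is essentially no obstacle here: the corollary is a two-line consequence of results proved above. The only point deserving a moment of care is the elementary observation that homothety of convex sets in the plane lets one express one as a nonnegative scalar multiple (plus a translation) of the other, so that their Minkowski sum is once more a homothet whose area scales quadratically; once this is in place the computation is forced.
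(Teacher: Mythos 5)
Your proof is correct, and your ``alternative'' route --- combining $\Delta^*(C_1\sqcup C_2)=\Delta^*(C_1)\boxplus\Delta^*(C_2)$ with the equality case of Minkowski's first inequality from Theorem 3.5 to get $2\,Area(\Delta^*(C_1)\boxplus\Delta^*(C_2))=\big(\deg(C_1)+\deg(C_2)\big)^2$ --- is precisely the derivation the paper intends (it leaves the corollary without an explicit proof as an immediate consequence of the theorem). Your first computation via $B=\lambda A+v$ is a harmless elementary variant of the same argument.
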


\section{Marked curves, their orientations and moduli}
\label{sec: moduli}
In this section we define orientations of \PTCs\ and introduce a notion of rigid marked curves.
We then study moduli spaces of rational oriented \PTCs\ for a suitable number $m=|\Delta|-1$
of marked points and analyze the corresponding evaluation maps.

\subsection{Moduli of \PTCs\ and their orientations}
\label{subsec: Moduli and orientation}
We say that two \PTCs\ $C=[(\G, \ell, h)]$ and $C'=[(\G', \ell', h')]$ with equal $\Delta$-sets have
the same \emph{combinatorial type}, if $\G$ is homeomorphic to $\G'$ (preserving the ordering of legs).
Corollary \ref{cor: Uniqueness of extension} implies that the set of lengths $\{l_e\}_{e\in E^0}$
and a point $h(v_*)\in\RR^2$ for a fixed {\em root} vertex $v_*\in V$  defines a curve $C$ uniquely.
It follows that lengths $\{l_e\}_{e\in E^0}$ and a point $h(v_*)\in\RR^2$ are free coordinates on a set
$\mathcal{M}_{\Delta}(\mu)$ of \PTCs\ $C=[(\G, \ell, h)]$ of a combinatorial type $\mu$.

In particular, after an ordering of $E^0$ and a choice of a root vertex $v_*$, the set
$\mathcal{M}_{\Delta}(\mu)$ can be identified with an (unbounded) open convex polyhedron
\begin{equation}\label{eq: Mu polyhedron}
\mathcal{M}_{\Delta}(\mu)\cong (\RR_+)^{|E^0|}\times\RR^2
\end{equation}
 in a real vector space $\RR^{|E^0|}\times\RR^2$.
A permutation of $E^0$ and a different choice $v_*'$ of a root vertex lead to an obvious linear
coordinate change on $(\RR_+)^{|E^0|}\times\RR^2$ by the corresponding reordering of the
coordinates on $(\RR_+)^{|E^0|}$ and a translation by the vector $h(v_*)-h(v_*')$ on $\RR^2$.
In particular, the Jacobian of such a coordinate change equals to $\pm1$ depending on the
parity of the permutation.
We define an orientation of $\mathcal{M}_{\Delta}(\mu)$ in an obvious way, using the
identification \eqref{eq: Mu polyhedron} above\footnote{It coincides with Kontsevich's
orientation of a graph complex in an even-dimensional case, see \cite{K}.}.
Namely, we choose an orientation of the vector space $\RR^{E^0}$, i.e., a sign-ordering
of the set $E^0$ of internal edges and complete it by the standard orientation of $\RR^2$.
Here by a {\em sign-ordering} of a set we mean an ordering up to even permutations.

Denote $\codim(\mu):= \sum\limits_{v\in V}(\mathrm{val}(v)-3)\,.$
Then we have $$|E^0|=|E^\infty|+3g-3 - \codim(\mu)=|\Delta|+3g-3- \codim(\mu)\,,$$
where $g=g(\G)$ is the genus of a graph $\G$ of type $\mu$.
So the set $\mathcal{M}_{\Delta}(\mu)$ can be also identified with
$(\RR_+)^{|\Delta|+3g-3-\codim(\mu)}\times\RR^2$.
Note that for a fixed $|\Delta|$ and $g$ top-dimensional polyhedra $\mathcal{M}_{\Delta}(\mu)$ correspond
to combinatorial types $\mu$ with $\codim(\mu)\nobreak=\nobreak0$, i.e., to combinatorial types of 3-valent
graphs. In particular, for $g=0$ top-dimensional polyhedra correspond to combinatorial types of 3-valent trees.
\begin{rem}
In case of $3$-valent graphs the number $|\Delta|+3g-3$ is the complex dimension of the moduli space of
Riemann surfaces of genus $g$ with $|\Delta|$ punctures, as expected from the theory of harmonic amoebas
of \cite{Kri} and their tropicalization (see \cite{Lang}).
Note that the corresponding moduli space of genuine plane tropical curves (with a fixed $\Delta$-set of integral
vectors) is of dimension $|\Delta|+g-1$ (see \cite[Proposition 2.23]{M1}). Thus, while for $g=0$
dimensions of these spaces coincide, for $g>0$ these spaces are quite different; in the tropical case parameters
$\{l_e\}_{e\in E^0}$ satisfy $2g$ constraints and fail to be free coordinates.
\end{rem}
In the rest of the paper we consider only curves of genus zero, leaving the higher genus case for a
future study.

Moduli spaces $\mathcal{M}_{\Delta}(\mu)$ corresponding to different combinatorial types of rational
curves with $n$ legs can be glued together to produce a grand moduli space of curves.
Its top-dimensional homology has a nice structure and can be identified with a certain quotient space
of trivalent trees (see Definition \ref{def: Jacobi} of Jacobi space $J_n$ in Section \ref{sec: homology}).
However, having in mind an enumerative geometry, we construct below a refined moduli space of rigid
marked curves with the same homology (but a different polyhedral structure), so that our enumerative
problems may be restated in the language of evaluation maps and the intersection theory, see
Theorem \ref{thm: count}.

\subsection{Moduli of marked rational curves}
\label{subsec: markedcurves}
We modify definitions of Sections \ref{subsec: definitions} and \ref{subsec: Moduli and orientation}
to include marked points.

Let $0<m<n$ and let $(\G, \ell)$ be an abstract rational tropical curve with the set
$E^\infty=\{e_1,\dots,e_{n+m}\}$ of legs.
We shall call the last $m$ legs $e_{n+i}$, $i=1,\dots,m$ the {\em marked legs} of $\G$
and denote by $E^m$ the set  $\{e_{n+1},\dots,e_{n+m}\}$ of marked legs.
Also, call the corresponding vertices $z_i=e_{n+i}^-$, $i=1,\dots,m$ the {\em marked vertices}
(or {\em marked points}) of $\G$.

Similairly to Section \ref{subsec: Moduli and orientation}, we define
\begin{defn}
\label{def: marked_curves}
A tuple $(\G, \ell, h, m)$ is called a \emph{parameterized $m$-marked curve} if $(\G, \ell, h)$
is a parameterized \PTC\ with an exception that every marked leg $e_{n+i}$ is contracted by
$h$ to a point $h(z_i)$, i.e. $\xi(e_{n+i}) = 0$ for all $i=1,\dots,m$.
\end{defn}

In figures we draw marked legs as dashed and marked vertices $z_i$ as fat points.

Two parameterized $m$-marked curves $(\G, \ell, h, m)$ and $(\G', \ell', h', m)$ have the same
combinatorial type, if there is an isomorphism $\rho:\G\to\G'$ (preserving markings and the
ordering of legs).
Moreover, these curves are called isomorphic, if $\rho$ above is an isometry such that $h = h'\circ\rho$.
The isomorphism class $C$ of  $(\G, \ell, h, m)$ is called a (rational) {\em $m$-marked curve}.
A curve $C=[(\G,\ell,h,m)]$ is {\em non-degenerate}, if for every unmarked vertex $v$ of $\G$
vectors $\xi_v(e)$ span $\RR^2$.

A {\em $\Delta$-set} $\Delta(C)$ of an $m$-marked curve $C =[(\G, \ell, h, m)]$ is the
(ordered) set of vectors $\xi$ along unmarked legs of $\G$, i.e.,
$$\Delta(C):= \{\xi(e_1),\dots,\xi(e_n)\}\,.$$
All other notions and statements of Sections \ref{sec: intro}--\ref{sec: duality} extend to
the case of marked curves.
Notably, Proposition \ref{prop: Existence and uniqueness for the Neumann problem} holds for
a general boundary current $\xi$ (in particular, when $\xi(e)=0$ for some $e\in E^\infty$),
so Corollary \ref{cor: Uniqueness of extension} holds also for marked curves.
Thus every marked curve $C=[(\G, \ell, h, m)]$ of a fixed combinatorial type $\mu$ is
uniquely defined by the lengths $\{l_e\}_{e\in E^0}$ of its bounded edges together with
an image $h(v_*)\in\RR^2$ of a root vertex $v_*\in V$.
Moreover, slope vectors $\{\xi(e)\}_{e\in E^0}$ do not depend on lengths
$\{l_e\}_{e\in E^0}$, but only on $\Delta$ and the combinatorial type $\mu$ of $C$
(see Corollary \ref{cor: Uniqueness of extension}).

We are interested only in some special types of markings:

\begin{defn}\label{def: rigid curves}
An $m$-marked curve $C =[(\G, \ell, h, m)]$ is called {\em rigid}, if each connected
component of $\G\minus E^m$ is a tree with at most one leg.
\end{defn}

\begin{ex}
For $n=2$ and $m=1$ there is only one type of curves, see Figure \ref{fig: rigid curves}a; it is rigid.
For $n=3$ and $m=1$ there are three types of non-rigid curves (which differ by the labels on the legs)
with two trivalent vertices and one rigid type with a 4-valent vertex. See Figure \ref{fig: rigid curves}b.
For $n=3$ and $m=2$ there are 22 types of curves: 12 types (which differ by the labels on the legs)
with 3 trivalent vertices, 9 types with one trivalent (either marked or unmarked) vertex and one 4-valent
vertex, and one type with a 5-valent vertex.
Thirteen rigid types are shown in Figure \ref{fig: rigid curves}c, and 9 non-rigid types are shown in
Figure \ref{fig: rigid curves}d.
\end{ex}

\begin{figure}[htb]
\includegraphics[width=5in]{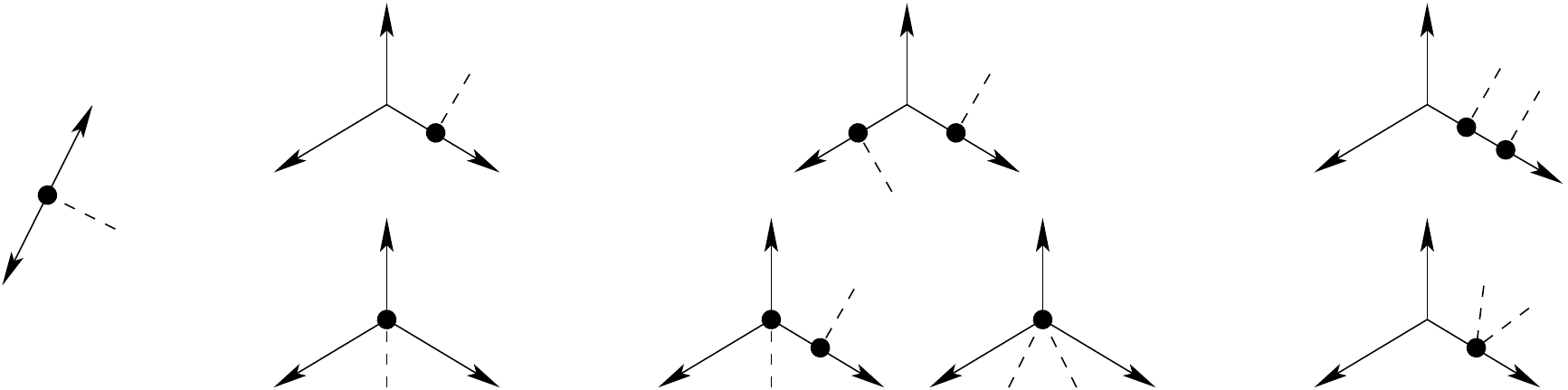}\\
a\hspace{1.0in}b\hspace{1.55in}c\hspace{1.6in}d\hspace{0.25in}
\caption{Examples of rigid and non-rigid marked curves.
\label{fig: rigid curves}}
\end{figure}

Note that since we assume that $m < n$, the rigidity condition gives a non-trivial restriction
on the maximal possible number of bounded edges of rigid trees with $n$ unmarked and
$m$ marked legs.
Indeed, a straightforward check shows that while for $m \ge n-1$ the maximal number of bounded
edges is $m+n-3$ (attained on trivalent trees as expected), for $m \le n-1$ this number is $2m-2$
and trees with a larger number of bounded edges fail to be rigid.

\begin{rem}
\label{rem: if m greater than n-1}
Our initial assumption $m<n$ on the number $m$ of marked points will become more clear
when we shall consider the evaluation map, mapping a curve into images of its $m$ marked
points (see Definition \ref{def: evaluation} below), and interpret counting of curves in terms
of intersections in homology, see Theorem \ref{thm: count}.
In order for the homology intersection theory to work, we shall need a certain balance of
dimensions, assured by the fact that for $m<n$ the maximal number of bounded edges is
exactly $2m-2$.
\end{rem}

Denote by $\mathcal{M}_{\Delta}(\mu)$ the set of rigid $m$-marked curves of a
combinatorial type $\mu$ and the $\Delta$-set $\Delta$.
We define coordinates and orientations on the space $\mathcal{M}_{\Delta}(\mu)$ as in
Section \ref{subsec: Moduli and orientation} above.
Namely, after an ordering of the set $E^0$ of internal edges of a graph $\G$ of the combinatorial
type $\mu$ and a choice of a root vertex $v_*$, the space $\mathcal{M}_{\Delta}(\mu)$ can be
identified with an (unbounded) open convex polyhedron $(\RR_+)^{|E^0|}\times\RR^2$ in a real
vector space $\RR^{|E^0|}\times\RR^2$.
An orientation of $\mathcal{M}_{\Delta}(\mu)$ is once again given by a sign-ordering of the set
$E^0$ (together with the standard orientation on the $\RR^2$-factor).
Define $\codim(\mu):= m-n+1 + \sum\limits_{v\in V}(\mathrm{val}(v)-3)$.
Then (since we work with trees) we have $|E^0|=2m-2- \codim(\mu)$, so the dimension
of $\mathcal{M}_{\Delta}(\mu)$ equals to
$$\dim \mathcal{M}_{\Delta}(\mu)=2m - \codim(\mu)\,.$$

We define the closure $\overline{\mathcal{M}}_{\Delta}(\mu)$ in a straightforward manner,
as the standard closure $(\RR_{\ge 0})^{|E^0|}\times\RR^2$ of the open polyhedron
$(\RR_+)^{|E^0|}\times\RR^2$ in $\RR^{|E^0|}\times\RR^2$.
Each relatively open cell in $\partial\big( (\RR_{\ge 0})^{|E^0|}\times\RR^2\big)$ corresponds to the
interior of the intersection of $(\RR_{\ge 0})^{|E^0|}\times\RR^2$ with a hyperplane $\{l_e=0\}$ for
some $e\in E^0$ and can be identified with $\mathcal{M}_{\Delta}(\mu_e)$, where $\mu_e$ is a
combinatorial type of a graph $\G/e$, obtained from $\G$ by a contraction of the (bounded) edge $e$.
It follows that $\partial \overline{\mathcal{M}}_{\Delta}(\mu)$ is the union
$\cup_I\mathcal{M}_{\Delta}(\mu_I)$ over non-empty subsets $I\subset E^0$ and $\mu_I$ is a
combinatorial type of a graph obtained from $\G$ by contractions of all edges in $I$.

Denote by $\mathcal{M}_{\Delta,n,m}$ the set of all rigid $m$-marked irreducible curves with the
$\Delta$-set $\Delta$, $|\Delta|=n$.
The space $\mathcal{M}_{\Delta,n,m}$ is stratified by combinatorial types of curves and we have
\begin{equation}\label{eqn: gluings}
\mathcal{M}_{\Delta,n,m} = \bigcup_\mu\mathcal{M}_{\Delta}(\mu) =
\Big(\bigsqcup_\mu\overline{\mathcal{M}}_{\Delta}(\mu)\Big)\big/\sim\,,
\end{equation}
where the union is over all combinatorial types $\mu$ of parameterized rigid $m$-marked curves with
the $\Delta$-set $\Delta$ and $C\sim C'$ if and only if $C=C'$.
The following proposition about the polyhedral structure of $\mathcal{M}_{\Delta,n,m}$ can be directly
transferred from \cite{GM1} including the proof:

\begin{prop}[Cf. {\cite[Proposition 3.12]{GM1}}]
For $m<n$ the moduli space $\mathcal{M}_{\Delta,n,m}$ has a structure of a
polyhedral complex of top dimension $2m$.
The corresponding (closed) faces of codimension $k$ are given by the sets
$\overline{\mathcal{M}}_{\Delta}(\mu)$ for all combinatorial types $\mu$ of $\codim(\mu)=k$.
\end{prop}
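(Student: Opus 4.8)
The plan is to verify directly that the gluing data \eqref{eqn: gluings} satisfies the axioms of a polyhedral complex, exactly along the lines of the cited \cite[Proposition 3.12]{GM1}. First I would record the ingredients already at hand: for each combinatorial type $\mu$ of a rigid $m$-marked curve with $\Delta$-set $\Delta$, the space $\overline{\mathcal{M}}_{\Delta}(\mu)$ has been identified with the closed polyhedron $(\RR_{\ge 0})^{|E^0|}\times\RR^2$ in $\RR^{|E^0|}\times\RR^2$, with the face structure of this polyhedron matching the stratification by subsets $I\subseteq E^0$: the relatively open face indexed by $I$ is canonically identified with $\mathcal{M}_{\Delta}(\mu_I)$, where $\mu_I$ is obtained from $\G$ by contracting all edges of $I$. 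Since we work with trees and $m<n$, the bound $|E^0|=2m-2-\codim(\mu)$ derived above shows $\dim\overline{\mathcal{M}}_{\Delta}(\mu)=2m-\codim(\mu)$; in particular the maximal-dimensional pieces are exactly those with $\codim(\mu)=0$ (trivalent trees), giving top dimension $2m$, and a type of codimension $k$ contributes a closed face of that codimension.

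Next I would check that the identifications $C\sim C'$ glue the boundary faces of the various $\overline{\mathcal{M}}_{\Delta}(\mu)$ consistently. The key point is that a curve $C$ lying in the boundary stratum indexed by $I\subseteq E^0$ of $\overline{\mathcal{M}}_{\Delta}(\mu)$ is, as an isomorphism class, precisely a rigid $m$-marked curve of combinatorial type $\mu_I$: contracting an edge of zero length in a parameterized curve does not change the isomorphism class, and by Corollary \ref{cor: Uniqueness of extension} the resulting curve of type $\mu_I$ is again uniquely determined by its remaining edge-lengths and the image of a root vertex. Conversely, every rigid $m$-marked curve of type $\mu_I$ arises this way from each $\mu$ that degenerates to $\mu_I$ by an edge contraction. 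Hence the attaching map from the face of $\overline{\mathcal{M}}_{\Delta}(\mu)$ indexed by $I$ to $\overline{\mathcal{M}}_{\Delta}(\mu_I)$ is the canonical affine isomorphism onto the open polyhedron, and these maps are compatible: if $I\subseteq J$ then the inclusion of the $J$-face into the closure of the $I$-face corresponds under these identifications to a face inclusion of $\overline{\mathcal{M}}_{\Delta}(\mu_I)$, because contracting the edges of $J$ can be performed by first contracting those of $I$. This is what makes the quotient in \eqref{eqn: gluings} a well-defined polyhedral complex whose closed cells are the $\overline{\mathcal{M}}_{\Delta}(\mu)$ and whose intersections of cells are again cells.

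Finally I would confirm the enumeration-of-faces statement: $\partial\overline{\mathcal{M}}_{\Delta}(\mu)=\bigcup_{\emptyset\ne I\subseteq E^0}\mathcal{M}_{\Delta}(\mu_I)$ as already noted, and running over all $\mu$ we get that the closed codimension-$k$ faces of $\mathcal{M}_{\Delta,n,m}$ are exactly the sets $\overline{\mathcal{M}}_{\Delta}(\nu)$ with $\codim(\nu)=k$, since every such $\nu$ is obtained from some trivalent $\mu$ by contractions. The main obstacle, as in \cite{GM1}, is not any single computation but the bookkeeping of \emph{rigidity under contraction}: one must check that contracting a bounded edge of a rigid $m$-marked tree again yields a rigid $m$-marked tree (each component of $\G/e\minus E^m$ is still a tree with at most one leg), so that the boundary strata stay within the same moduli space and no spurious identifications or missing cells occur. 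Once this closure property of the rigid locus is established, the remaining verification is routine and the proof transfers verbatim from \cite[Proposition 3.12]{GM1}.
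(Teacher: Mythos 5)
Your proposal follows exactly the route the paper intends: the paper gives no argument of its own here, stating only that the proof ``can be directly transferred from \cite{GM1} including the proof,'' and what you write out is precisely that transfer --- the identification $\overline{\mathcal{M}}_{\Delta}(\mu)\cong(\RR_{\ge 0})^{|E^0|}\times\RR^2$, the dimension count $|E^0|=2m-2-\codim(\mu)$, the matching of boundary strata $\{l_e=0,\ e\in I\}$ with $\mathcal{M}_{\Delta}(\mu_I)$, and the compatibility of these identifications under $I\subseteq J$. You also correctly isolate the one point where the pseudotropical/rigid setting differs from the generic bookkeeping, namely that rigidity is preserved under edge contraction (contracting a bounded edge does not change the components of $\G\minus E^m$ nor the distribution of unmarked legs among them), which is what keeps the boundary strata inside the same moduli space. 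One correction: your parenthetical identifying the $\codim(\mu)=0$ types with \emph{trivalent} trees, and the later claim that every $\nu$ is obtained from a trivalent $\mu$ by contractions, are only valid for $m=n-1$. For $m<n-1$ the rigidity condition caps $|E^0|$ at $2m-2<n+m-3$, so the top-dimensional rigid types necessarily carry vertices of valence greater than three ($\sum_v(\mathrm{val}(v)-3)=n-m-1>0$); the dimension count you actually use, $\dim\mathcal{M}_{\Delta}(\mu)=2m-\codim(\mu)$, is unaffected, and the enumeration of codimension-$k$ faces follows from the construction of the union \eqref{eqn: gluings} itself rather than from resolving every $\nu$ to a trivalent type, so this slip does not damage the argument --- but the statement as you phrased it is false outside the case $m=n-1$.
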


In the rest of the paper we shall consider the case of $m=n-1$ marked legs (so that
$\dim\mathcal{M}_{\Delta,n,n-1} =2m=2n-2$) that is relevant to enumerative problems
of Sections \ref{sec: homology}--\ref{sec: recursion}.
The case $m< n-1$ is related to a study of descendant Gromov-Witten-type invariants
and more complicated counting problems, so we postpone it to a forthcoming paper.

Let us study the structure of $\mathcal{M}_{\Delta,n,n-1}$ in the crucial cases $n=2,3$:
\begin{ex}
\label{ex: tripod moduli}
For $m=n-1=1$ and a $\Delta$-set $\Delta=\{\xi,-\xi\}$ there is only one combinatorial
type $\mu$ of rigid curves, shown in Figure \ref{fig: rigid curves}a, so that
$\mathcal{M}_{\Delta,2,1}\cong\RR^2$ (with the coordinate being the position of the only
marked vertex).

For $m=n-1=2$ and a $\Delta$-set $\Delta=\{\xi_1,\xi_2,\xi_3\}$ there are 6 top-dimensional
faces, corresponding to combinatorial types $\mu_{ij}$, $1\le i\ne j\le 3$ of curves, with $\mu_{ij}$
being the type corresponding to $z_1$ incident to $e_i$ and $z_2$ incident to $e_j$, see Figures
\ref{fig: rigid curves}b, \ref{fig: tripod}.
All six closed faces $\overline{\mathcal{M}}_{\Delta}(\mu_{ij})\cong (\RR_{\ge 0})^2\times\RR^2$
are glued along their boundary strata as shown in Figure \ref{fig: tripod}, resulting in the polyhedral
structure on $\mathcal{M}_{\Delta,3,2}\cong\RR^2\times\RR^2$.

\begin{figure}[htb]
\includegraphics[height=1.6in]{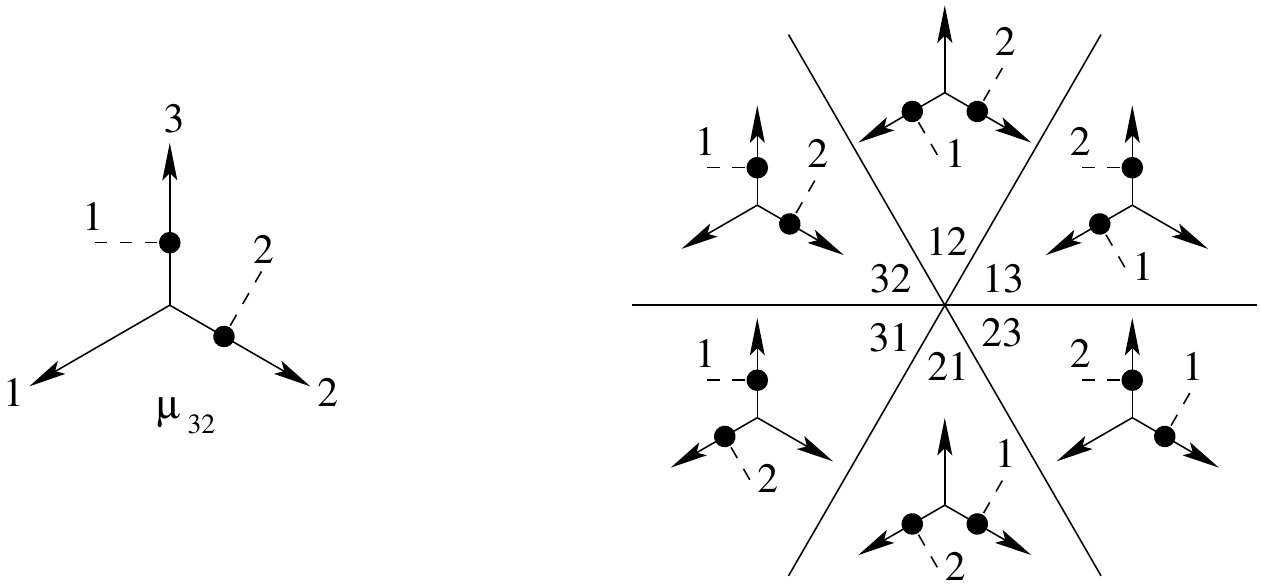}
\caption{Moduli of rigid curves for $m=n-1=2$.
\label{fig: tripod}}
\end{figure}
\end{ex}

\subsection{Top-dimensional faces of $\mathcal{M}_{\Delta,n,n-1}$ and their orientations}
\label{subsec: faces}
For $m=n - 1$ there is an additional structure on top-dimensional faces of $\mathcal{M}_{\Delta,n,m}$.
Indeed, let $\G$ be a rigid tree of a combinatorial type $\mu$ with $\codim(\mu) = 0$.
But for $m=n-1$ we have $\codim(\mu)=\sum\limits_{v\in V}(\mathrm{val}(v)-3)$,
so all (in particular, marked) vertices of such a tree $\G$ are trivalent.
Thus $\G\minus E^m$ consists of $m+1=n$ connected components and the rigidity condition
of Definition \ref{def: rigid curves} implies that every connected component of $\G\minus E^m$
contains exactly one leg.
This important observation leads, in particular, to an alternative description of orientations.

\begin{defn}
A {\em Lie-orientation} of a trivalent tree with $n$ unmarked and $m$ marked legs is a choice
of a cyclic ordering of half-edges in each of the $n-2$ unmarked vertices of the tree, up to negating
any two such cyclic orderings.
\end{defn}
We claim that a choice of a Lie-orientation of a rigid trivalent tree $\G$ of a combinatorial type
$\mu$ with $\codim(\mu) = 0$ is equivalent to an orientation of $\mathcal{M}_{\Delta}(\mu)$.
Indeed, fix a cyclic order of half-edges at every unmarked vertex.
As observed above, every connected component of $\G\minus E^m$ has exactly one leg;
direct all edges in each such connected component towards its unique leg, see Figure
\ref{fig: orientation}a.
We shall call such a choice of directions on edges the {\em outer flow}.

\begin{figure}[htb]
\includegraphics[width=5in]{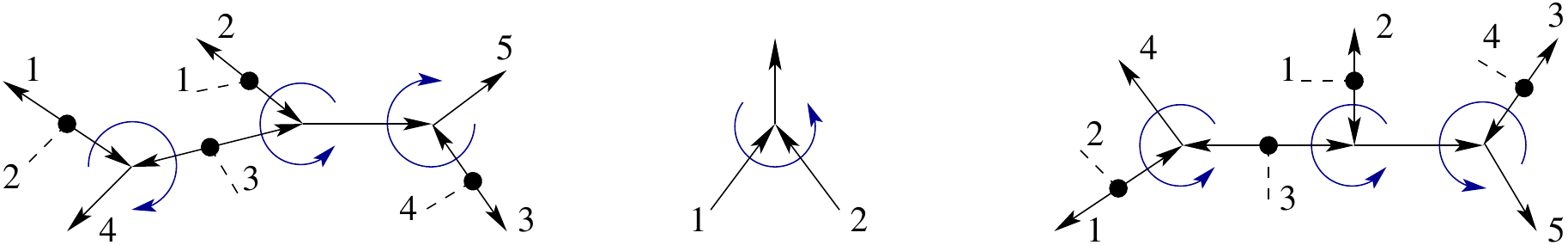}\\
a\hspace{1.5in}b\hspace{1.5in}c
\caption{Outer flow and cyclic orientations.
\label{fig: orientation}}
\end{figure}

In each unmarked vertex $v$ there are exactly two incoming and one outgoing half-edges.
The cyclic order determines the order $e_1(v)\wedge e_2(v)$ of the two incoming half-edges
at $v$, see Figure \ref{fig: orientation}b.
Note that since these half-edges are incoming, they belong to bounded edges only and vice-versa:
for every bounded edge one of its half-edges is incoming for some unmarked vertex.
Order unmarked vertices $v_i$, $1\le i\le n-2$ arbitrarily.
A sign-ordering
$$\left(e_1(v_1)\wedge e_2(v_1)\right)\wedge \left(e_1(v_2)\wedge e_2(v_2)\right)
\wedge\dots\wedge \left(e_1(v_{n-2})\wedge e_2(v_{n-2})\right)$$
determines an orientation on $\mathcal{M}_{\Delta}(\mu)$.
Reorderings of $n-2$ unmarked vertices lead to even permutations of $2(n-2)=2m-2$ bounded
edges and thus preserve this orientation.
We thus obtain

\begin{prop}
Oriented top-dimensional faces of $\mathcal{M}_{\Delta,n,n-1}$ are in one-to-one correspondence
with Lie-oriented rigid trivalent trees with $n$ unmarked and $n-1$ marked legs.
\end{prop}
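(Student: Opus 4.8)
The plan is to assemble the three observations preceding the statement into a two-sided count. First I would recall, from the proposition just above, that for $m=n-1$ the top-dimensional faces of $\mathcal{M}_{\Delta,n,n-1}$ are exactly the polyhedra $\mathcal{M}_{\Delta}(\mu)$ with $\codim(\mu)=0$, and that such combinatorial types $\mu$ are precisely the isomorphism classes of rigid trivalent trees with $n$ unmarked and $n-1$ marked legs; moreover an orientation of such a face is the same datum as a sign-ordering of the set $E^0$ of bounded edges, since the $\RR^2$-factor always carries its standard orientation. Thus it suffices to produce, for each such tree $\G$, a bijection between the Lie-orientations of $\G$ and the sign-orderings of $E^0$, natural enough to be compatible with this identification of faces.

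For the map itself I would use the outer flow exactly as set up above: at each of the $n-2$ unmarked vertices there are two incoming and one outgoing half-edges, every bounded edge carries exactly one half-edge that is incoming, and that half-edge sits at a unique unmarked vertex. Hence the incoming half-edges are in canonical bijection with $E^0$, partitioned by ``which unmarked vertex they sit at'' into $n-2$ pairs, and $|E^0|=2(n-2)=2m-2$. Given the distinguished outgoing half-edge at an unmarked vertex $v$, a cyclic order of the half-edges at $v$ is the same datum as a linear order $e_1(v)\wedge e_2(v)$ of the two incoming ones; concatenating over an auxiliary ordering $v_1,\dots,v_{n-2}$ of the unmarked vertices gives the sign-ordering $\bigl(e_1(v_1)\wedge e_2(v_1)\bigr)\wedge\dots\wedge\bigl(e_1(v_{n-2})\wedge e_2(v_{n-2})\bigr)$ of $E^0$. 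I would then verify well-definedness against both equivalences in play: reordering $v_1,\dots,v_{n-2}$ permutes the $2m-2$ bounded edges by an even permutation and so preserves the sign-ordering; and negating the cyclic orders at any two unmarked vertices replaces $e_1(v)\wedge e_2(v)$ by $e_2(v)\wedge e_1(v)$ at each of them, i.e. applies two transpositions, again an even permutation. Hence the construction descends to a well-defined map from Lie-orientations of $\G$ to orientations of $\mathcal{M}_{\Delta}(\mu)$.

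Bijectivity I would obtain by a parity argument. For surjectivity: given a sign-ordering of $E^0$, pick a representative ordering and reorder it by an even permutation (possible because each bounded edge is attached, via its incoming half-edge, to a unique unmarked vertex) so that it lists the two edges incoming at $v_1$, then the two incoming at $v_2$, and so on; the order inside each pair recovers $e_1(v_i)\wedge e_2(v_i)$, hence a cyclic order at $v_i$, hence a Lie-orientation with the prescribed image. For injectivity: if two Lie-orientations of $\G$ have the same image, let $S$ be the set of unmarked vertices at which their cyclic orders differ; their associated orderings of $E^0$ then differ by $|S|$ transpositions, so $|S|$ is even, which is exactly the condition that the two Lie-orientations agree. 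Since the bijection was built from canonical data of $\G$ (the outer flow), it is compatible with the identification of faces with trees from the first step, and combining the two correspondences proves the statement.

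The one point that genuinely requires care is this sign bookkeeping — in particular the fact that negating a single cyclic order corresponds to a single transposition of $E^0$ and hence reverses the orientation — which is precisely what makes the ``up to negating any two'' relation defining a Lie-orientation match the ``up to even permutation'' relation defining an orientation of $\mathcal{M}_{\Delta}(\mu)$. I expect no essential difficulty beyond that.
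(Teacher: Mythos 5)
Your proposal is correct and follows essentially the same route as the paper: the outer flow on a rigid trivalent $(n,n-1)$-tree makes the incoming half-edges at the $n-2$ unmarked vertices a partition of $E^0$ into ordered pairs, and the resulting sign-ordering matches Lie-orientations with orientations of the face. The only difference is that you spell out the injectivity/surjectivity parity check explicitly, which the paper leaves implicit after its well-definedness argument.
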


We shall call such Lie-oriented rigid trivalent trees {\em $(n,n-1)$-trees} for short.
In figures we shall always draw them using the outer flow directions on edges and the
counterclockwise cyclic ordering of half-edges in all unmarked vertices (unless indicated otherwise).
See Figure \ref{fig: orientation}c.

\subsection{Multiplicities, orientations and evaluation map}
\label{subsec: evaluation}
Fix an orientation $or_\mu$ on a top-dimensional face $\mathcal{M}_{\Delta}(\mu)$ of
$\mathcal{M}_{\Delta,n,n-1}$.
\begin{defn}
Let $C=[(\G,\ell,h,m)]$ be a curve in $\mathcal{M}_{\Delta}(\mu)$.
Fix  an ordering $e_1(v)\wedge e_2(v)$ of the two incoming half-edges
at each unmarked vertex $v$, consistent with the orientation $or_\mu$.
Define the {\em multiplicity $\mult(C, or_\mu)$ of $C$} as
\begin{equation}\label{eq: mult}
\mult(C, or_\mu):=\prod_v \det\left(\xi_v(e_1(v)),\xi_v(e_2(v))\right)\,,
\end{equation}
where the product is over all $n-2$ unmarked vertices of $C$.
\end{defn}

By definition, $\mult(C, or_\mu)$ depends only on the sign-ordering of bounded edges,
i.e., only on the orientation $or_\mu$ and $\mult(C, -or_\mu)=-\mult(C, or_\mu)$.
Recall that $C$ is non-degenerate, if for every unmarked vertex $v$ of $\G$ vectors
$\xi_v(e)$ span $\RR^2$.
Non-degeneracy of $C$ can be restated as $\mult(C, or_\mu)\ne 0$. For a non-degenerate
curve $C$ we call the orientation $or_\mu$ {\em positive}, if  $\mult(C, or_\mu)> 0$.
We shall alternatively call it the {\em blackboard orientation}, since the choice of the blackboard
order of incoming slope vectors (and the induced order of the corresponding half-edges) at each
unmarked vertex of $C$ defines the positive orientation on $\mathcal{M}_{\Delta}(\mu)$.

Recall that slope vectors on edges of $C$ depend only on $\mu$ (see discussion in Section
\ref{subsec: markedcurves} and Corollary \ref{cor: Uniqueness of extension}).
Thus we may denote $\mult(\mu, or_\mu)=\mult(C, or_\mu)$.

\begin{defn}
\label{def: evaluation}
We define the evaluation map $\ev$ by
$$\ev: \mathcal{M}_{\Delta,n,m}\to(\RR^2)^{m},\hspace{0.5cm} \ev([(\G, \ell, h, m)]) =
(h({z_1}),\dots, h({z_m})).$$
\end{defn}
Recall that $\dim\mathcal{M}_{\Delta,n,m} =2m$, so $\ev$ is a map between spaces of the
same dimension.
Denote by $\ev\restrict{\mu}$ the restriction of $\ev$ to $\overline{\mathcal{M}}_{\Delta}(\mu)$.

\begin{ex}
\label{ex: rigid}
Consider a curve $C$ with $m=n-1=1$ of Figure \ref{fig: rigid curves}a.
The unique marked vertex is also a root $v_*$, thus $\ev:\RR^2\to\RR^2$ is the
identity map and $\mult(C,+1)=1$, where $+1$ stands for the positive orientation of
$\mathcal{M}_{\Delta,2,1}\cong\RR^2$.

Consider a Lie-oriented curve $C$ of Figure \ref{fig: rigid curves}b with $m=n-1=2$ and a
$\Delta$-set $\Delta=\{\xi_1,\xi_2,\xi_3\}$.
We have $\mathcal{M}_{\Delta,3,2}\cong\RR^2\times\RR^2$, glued from six top-dimensional
closed faces $\overline{\mathcal{M}}_{\Delta}(\mu_{ij})$, see Example \ref{ex: tripod moduli}
and Figure \ref{fig: tripod}.
Choose the orientation $or_{\mu_{ij}}$ on $\overline{\mathcal{M}}_{\Delta}(\mu_{ij})$
given by the counterclockwise cyclic order of edges in the only unmarked vertex $v$ for each
combinatorial type $\mu_{ij}$ shown in Figure \ref{fig: tripod}. These orientations induce
opposite orientations on common boundaries of $\overline{\mathcal{M}}_{\Delta}(\mu_{ij})$
and thus define a global orientation on $\mathcal{M}_{\Delta,3,2}\cong\RR^2\times\RR^2$.

Define $\eps_{ij}=+1$ if $(ijk)$ is an even permutation of $(123)$ and $\eps_{ij}=-1$ if
$(ijk)$ is odd, see Figure \ref{fig: tripod}.
Let $\xi_i=(x_i,y_i)$ and  $\xi_j=(x_j,y_j)$ be the slope vectors along legs $e_i$ and $e_j$
(and thus, by balancing, also on bounded edges connecting $v$ with $z_1$ and $z_2$).
The multiplicity of $C$ is $\mult(C, or_{\mu_{ij}})=\det(\xi_i,\xi_j)$ if $\eps_{ij}=+1$ and
$\mult(C, or_{\mu_{ij}})=\det(\xi_j,\xi_i)$ if $\eps_{ij}=-1$. Thus by the balancing
condition $\mult(C, or_{\mu_{ij}})=\det(\xi_1,\xi_2)=\det(\xi_2,\xi_3)=\det(\xi_3,\xi_1)$
is constant on the whole moduli space $\mathcal{M}_{\Delta,3,2}$.

Denote by $l_1$ and $l_2$ lengths of bounded edges connecting $v$ with marked vertices
$z_1$ and $z_2$, respectively.
The Lie-orientation $or_{\mu_{ij}}$ of $C$ determines the order of the two incoming
half-edges of $C$ in $v$ and hence defines the orientation $\eps_{ij}(l_1\wedge l_2)$ of
$\RR^{|E^0|}=\RR^2$, see Figure \ref{fig: tripod}.
Choose $v_*=z_2$ as the root vertex.
Then $\ev:\mathcal{M}_{\Delta,3,2}\cong\RR^2\times\RR^2\to(\RR^2)^2$ is given
by a PL-map, glued from linear maps $\ev\restrict{\mu_{ij}}$ on each of the six faces
$\overline{\mathcal{M}}_{\Delta}(\mu_{ij})$ of $\mathcal{M}_{\Delta,3,2}$.
These maps are given by
$$\ev\restrict{\mu_{ij}}:(l_1,l_2,h(v_*))\mapsto(\ev_{ij}(l_1,l_2)+h(v_*),h(v_*))\,,$$
where $\ev_{ij}(l_1,l_2)=l_1\xi_i-l_2\xi_j$.
In the positive coordinates $(l_1,l_2,h(v_*))$ for $l_1\wedge l_2$-orientation and
$(l_2,l_1,h(v_*))$ for $l_2\wedge l_1$-orientation the matrix of $\ev\restrict{\mu_{ij}}$ is
$\left(\begin{array}{c|c}
          A_{ij}\sigma_{ij} & I_2 \\
          \hline
          0 & I_2 \\
        \end{array}\right)$, where $I_2$ is the $2\times 2$ identity matrix, $A_{ij}$ is the matrix
$\left(\begin{array}{cc}
          x_i & -x_j \\
          y_i & -y_j \\
        \end{array}\right)$
and $\sigma_{ij}=I_2$ if $\eps_{ij}=+1$ and
$\sigma_{ij}=\left(\begin{array}{cc}
          0 & 1 \\
          1 & 0 \\
        \end{array}\right)$
if $\eps_{ij}=-1$.
The determinant of $\ev\restrict{\mu_{ij}}$ is
$$\eps_{ij}(-x_iy_j+x_jy_i)=-\eps_{ij}\det(\xi_i,\xi_j)=-\mult(C, or_{\mu_{ij}})\,.$$

If vectors of $\Delta$ do not span $\RR^2$, then $\text{rank}(A_{ij})=1$,
$\mult(C, or_{\mu_{ij}})=0$ and the image of $\ev$ is 3-dimensional.
If vectors of $\Delta$ span $\RR^2$, then $\text{rank}(A_{ij})=2$, $\mult(C, or_{\mu_{ij}})\ne 0$
and the image $\bigcup_{i,j}\ev_{ij}(\overline{\mathcal{M}}_{\Delta}(\mu_{ij}))$ is $\RR^2$ with
the polyhedral structure induced from that on $\mathcal{M}_{\Delta,3,2}$, see Figure \ref{fig: compass}.
The resulting PL-map $\ev:\RR^4\to\RR^4$ is a bijection of degree
$-\sign\left(\mult(C, or_{\mu_{ij}})\right)=-\sign\left(\det(\xi_1,\xi_2)\right)$.
In particular, if one chooses the blackboard orientation on each
$\overline{\mathcal{M}}_{\Delta}(\mu_{ij})$ then $\ev:\RR^4\to\RR^4$ is a bijection of degree $-1$.
\begin{figure}[htb]
\includegraphics[height=1.2in]{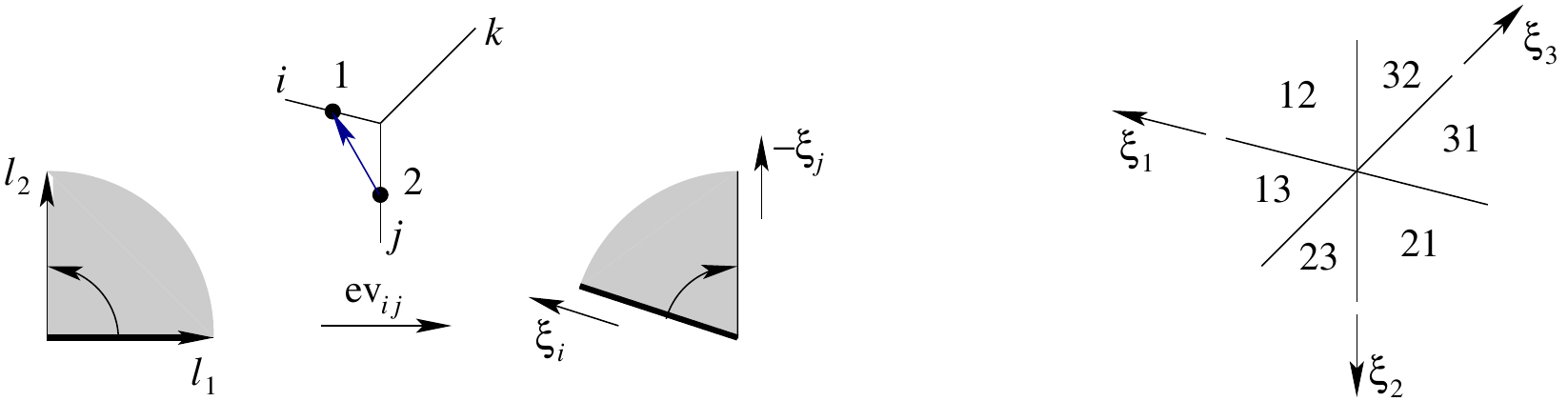}
\caption{PL-structure of $\ev$.
\label{fig: compass}}
\end{figure}
\end{ex}

\begin{prop}
\label{prop: Structure of evaluation map}
Under the identification $\overline{\mathcal{M}}_{\Delta}(\mu)\cong(\RR_{\ge 0})^{|E^0|}\times\RR^2$
given by an ordering of $E^0$ and a choice of a root vertex (and the standard coordinates on $(\RR^2)^m$)
the evaluation map $\ev$ is linear on each face $\overline{\mathcal{M}}_{\Delta}(\mu)$.
On a top-dimensional face $\mathcal{M}_{\Delta}(\mu)$ of $\mathcal{M}_{\Delta,n,n-1}$ equipped
with an orientation $or_\mu$ the determinant of $\ev\restrict{\mu}$ in the coordinates above equals
$(-1)^n\mult(\mu, or_\mu)$.
\end{prop}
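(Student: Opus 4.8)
The plan is to handle the two assertions separately.

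\emph{Linearity} is essentially immediate. Fix $\mu$, an ordering of $E^0$, and a root vertex $v_*$. Travelling along the unique path in $\G$ from $v_*$ to a vertex $v$ and adding up the affine increments edge by edge gives $h(v)=h(v_*)+\sum_{e}c_{v,e}\,l_e\,\xi(e)$, where $c_{v,e}\in\{0,\pm1\}$ is supported on the edges of that path, with sign recording the traversal direction. By Corollary \ref{cor: Uniqueness of extension} the slope vectors $\xi(e)$ depend only on $\mu$ (not on $\ell$), so each $h(z_i)$ is an affine-linear function of $(\{l_e\}_{e\in E^0},h(v_*))$; hence $\ev\restrict\mu$ is linear. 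For the determinant we may, since $\det(\ev\restrict\mu)$ is independent of the choice of root (a change of root is a translation on the $\RR^2$-factor, of Jacobian $+1$), take $v_*=z_m$, so that $h(z_m)=h(v_*)$. Ordering the source as $(\{l_e\},h(v_*))$ and the target as $(h(z_1),\dots,h(z_{m-1}),h(z_m))$, the Jacobian of $\ev\restrict\mu$ is block upper-triangular, $\begin{pmatrix}M&*\\0&I_2\end{pmatrix}$, because $h(z_m)=h(v_*)$ does not involve the $l_e$ and $\partial h(z_i)/\partial h(v_*)=I_2$; thus $\det(\ev\restrict\mu)=\det M$, where $M$ is the $(2m-2)\times(2m-2)=2(n-2)\times 2(n-2)$ matrix with rows indexed by $z_1,\dots,z_{m-1}$, columns by $E^0$, and $(z_i,e)$-block the vector $c_{z_i,e}\xi(e)\in\RR^2$ (which does not depend on the orientation chosen for $e$).

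The core is to show $\det M=(-1)^n\mult(\mu,or_\mu)$. Recall from Section \ref{subsec: faces} the outer flow on $\G$: every bounded edge has a well-defined head, which is an unmarked vertex, and every unmarked vertex $w$ is the head of exactly two bounded edges $e_1(w),e_2(w)$, its incoming edges, ordered according to the Lie-orientation $or_\mu$; these $n-2$ pairs partition $E^0$. For an unmarked vertex $w$ put $D_w:=h(e_1(w)^-)-h(e_2(w)^-)$; computing the two increments outward from $w$ gives $D_w=l_{e_1(w)}\xi_w(e_1(w))-l_{e_2(w)}\xi_w(e_2(w))$, a function of the two coordinates $l_{e_1(w)},l_{e_2(w)}$ alone. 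Hence the linear map $(l_e)_e\mapsto(D_w)_w$ is block-diagonal, with determinant $\prod_w\det(\xi_w(e_1(w)),-\xi_w(e_2(w)))=(-1)^{n-2}\prod_w\det(\xi_w(e_1(w)),\xi_w(e_2(w)))=(-1)^n\mult(\mu,or_\mu)$. The plan is then to prove $\det M$ equals this: writing each $h(z_i)-h(v_*)$ as the telescoping sum of the increments $\pm l_e\xi(e)$ along $\mathrm{path}(v_*,z_i)$ and grouping the increments according to the (unmarked) head of each edge, the contribution of a vertex $w$ through which the path turns back via \emph{both} of its incoming edges is precisely $\pm D_w$, while every other vertex on the path contributes a single term supported in one of its two columns; a suitable reindexing of the rows $z_1,\dots,z_{m-1}$ and of the column-blocks $\{e_1(w),e_2(w)\}$ then brings $M$ into block lower-triangular form with $w$-th diagonal block $\pm(\xi_w(e_1(w))\mid-\xi_w(e_2(w)))$, and multiplying the diagonal blocks gives $\det M=(-1)^n\mult(\mu,or_\mu)$. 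For $n=3$ this reproduces the explicit computation of Example \ref{ex: rigid}.

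The main obstacle is this last step — making the block-triangularization precise. One must (i) choose the root and a linear order on the unmarked vertices for which each $D_w$ can be realized from the already-processed rows without disturbing the columns of the not-yet-processed vertices; this is where the hypothesis $m<n$ and rigidity enter (so that every vertex of $\G$ incident to a single bounded edge is marked, and $\G$ is a tree of one-leg pieces glued along marked vertices, which controls the combinatorics of the paths $\mathrm{path}(v_*,z_i)$ and which unmarked vertex is the ``turning point'' of each), and (ii) reconcile the three sources of signs — the traversal signs $c_{z_i,e}$, the factor $(-1)^{n-2}$ from the $2\times2$ blocks $(\xi_w(e_1(w))\mid-\xi_w(e_2(w)))$, and the conventions defining $\mult(\mu,or_\mu)$ and $or_\mu$ via cyclic orders of half-edges at unmarked vertices — so that they combine into precisely $(-1)^n$.
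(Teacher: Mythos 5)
Your reduction to the matrix $M$ and the target identity $\det M=(-1)^n\mult(\mu,or_\mu)$ are sound, and your route is genuinely different from the paper's: the paper makes no attempt at a global triangularization, but instead argues by induction on $|E^0|$, cutting the curve along a bounded edge $e$ (chosen so that each side retains at least one bounded edge) into two smaller rigid marked curves $C^\pm$, placing the root at $e^-$ so that the matrix of $\ev$ becomes block-diagonal with blocks equal (up to a trivial $I_2$ factor) to the evaluation matrices of $C^\pm$, and anchoring the induction in the explicit $m=1,2$ computations of Example \ref{ex: rigid}; the case $\xi(e)=0$ is disposed of separately, since then both $\det(\ev\restrict{\mu})$ and $\mult(\mu,or_\mu)$ vanish. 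That inductive scheme is exactly what lets the paper avoid the combinatorial bookkeeping your approach requires.

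The gap in your argument is the one you flag yourself: the block lower-triangularization of $M$ is asserted, not proved, and it is the entire content of the determinant computation. Concretely, you need a bijection between the $n-2$ non-root marked vertices and the $n-2$ unmarked vertices, together with compatible orderings, such that (a) for each $j$ the path from $v_*$ to $z_{i_j}$ ``turns'' at $w_j$, i.e.\ uses both incoming edges of $w_j$ under the outer flow (so that the diagonal block is $\pm\bigl(\xi_{w_j}(e_1)\mid-\xi_{w_j}(e_2)\bigr)$ rather than a rank-one block coming from a vertex the path merely passes through via its outgoing edge), and (b) every other unmarked vertex serving as the head of an edge of that path already occurs among $w_1,\dots,w_{j-1}$. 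This is a nontrivial matching-plus-ordering statement about rigid trees: one can check that a path turns exactly once inside each component of $\G\minus E^m$ it crosses, but a single path has several turns and several pass-throughs, and different paths compete for the same turning vertices, so the existence of the required system of distinct ``new'' turns is precisely what must be proved. By contrast, your worry (ii) about signs largely evaporates once (a) and (b) are in place: negating an entire $2\times2$ block does not change its determinant, so each diagonal block contributes $-\det\bigl(\xi_w(e_1(w)),\xi_w(e_2(w))\bigr)$ irrespective of the direction in which the path traverses the turn, and the $n-2$ such factors give $(-1)^{n-2}\mult(\mu,or_\mu)=(-1)^n\mult(\mu,or_\mu)$ automatically. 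So the missing piece is (i); either supply it (an induction on the tree whose nodes are the components of $\G\minus E^m$ and whose edges are the marked vertices should work), or switch to the paper's edge-cut induction, which bypasses the issue entirely.
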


\begin{proof}
For any marked vertex $z_i$, $i=1,\dots,n-1$ there exists a unique 1-chain $C_i=\sum_{e\in C_i} e$
of oriented bounded edges of $\G$ such that $\partial C_i=z_i-v_*$.
Since the components of $\ev$ are $h(z_i)=\sum_{e\in C_i} l_e\cdot\xi(e)+h(v_*)$, every $h(z_i)$
is given by a linear combination of certain $l_e$'s and $h(v_*)$, so $\ev$ is linear on each face
$\overline{\mathcal{M}}_{\Delta}(\mu)$.

To compute the determinant of $\ev$ on a top-dimensional face $\mathcal{M}_{\Delta}(\mu)$
we closely follow the proof of \cite[Proposition 3.8]{GM1}, modifying it appropriately for the
pseudotropical case. We proceed by induction on the number $|E^0|=2m-2$ of bounded edges.
For $m=1,2$ the statement follows from calculations in Example \ref{ex: rigid}. For $m>2$, let
$C=[(\G, \ell, h, m)]$ be a rigid curve of type $\mu$ (with the Lie-orientation $or_\mu$ and the
outer flow directions of edges). We shall use the edge-cut procedure shown in Figure \ref{fig: edge_cut}
and described below to split $C$ into two rigid curves $C^\pm$ with smaller numbers of bounded edges.

\begin{figure}[htb]
\includegraphics[width=5in]{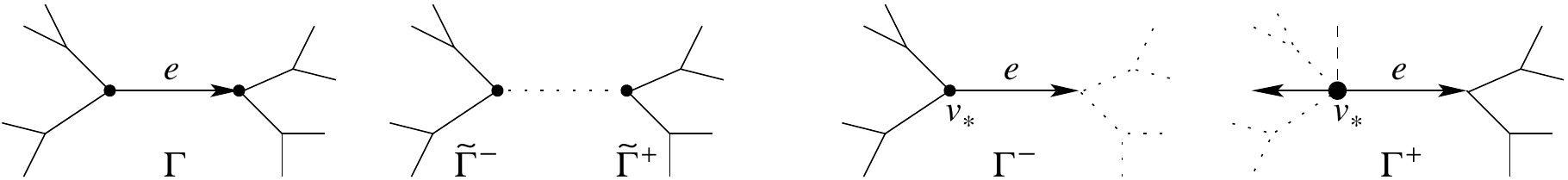}\\
a\hspace{2.6in}b
\caption{Edge-cut of a rigid curve.
\label{fig: edge_cut}}
\end{figure}

Choose a bounded edge $e$ of $\G$, such that each connected component of $\G$
after the removal of an interior of the edge $e$ contains at least one bounded edge.
Let $\widetilde\G^\pm$ be the connected component containing $e^\pm$,
see Figure \ref{fig: edge_cut}a.

If $\xi(e)=0$, then $\ev\restrict{\mu}$ is not injective (since all curves which differ
by the length $l_e$ of $e$ have the same image), so $\det(\ev\restrict{\mu})=0$.
On the other hand, in this case slope vectors incident to $e^+$ do not span $\RR^2$
so $\mult(C, or_\mu)=0$ and the proposition follows.
Therefore we may assume that $\xi(e)\ne 0$.

Equip $\widetilde\G^\pm$ with directions, lengths and slope vectors along all edges as in $\G$.
Denote by $\G^-$ the graph obtained from $\widetilde\G^-$ by an addition of a new unmarked
leg incident to $e^-$ with the slope vector $\xi(e)$.
In a similar way, denote by $\G^+$ the graph obtained from $\widetilde\G^+\cup \{e\}$  by
an addition of one new marked and one unmarked leg incident to the vertex $e^-$, with slope
vectors $0$ and $-\xi(e)$ respectively, see Figure \ref{fig: edge_cut}b.
Denote by $m^\pm$ the number of marked legs of $\widetilde\G^\pm$ (so $m^-+m^+=m+1$).
Order the legs of $\G^\pm$ following their relative order in $\G$ (with the new marked leg on
$\G^+$ being the last marked leg).
The outer flow and the Lie-orientation of $\G$ induce those on $\G^\pm$ and make them into a
pair of rigid Lie-oriented curves $C^\pm$ with $m^\pm$ marked and $m^\pm+1$ unmarked legs.

Note that the determinant of $\ev$ does not depend on an order of legs, order of bounded
edges and a choice of the root vertex, but only on the orientation of the corresponding curve.
Thus we may choose orders of legs and bounded edges of $\G$ so that edges of $\G^-$
precede those of $\G^+$ and use the same ordering (up to a shift) on $\G^\pm$.
Also, we  may choose $v_*=e^-$ as the root vertex on all three curves $\G$, $\G^\pm$.
With these choices the matrix of $\ev$ has the block form
$ \left(        \begin{array}{c|c}
          A^- & 0 \\
          \hline
          0 & A^+ \\
        \end{array}      \right)$,
while matrices of $\ev$ on $\G^\pm$ are $A^-$ and $\left(        \begin{array}{c|c}
          A^+ & 0 \\
          \hline
          0 & I_2 \\
        \end{array}      \right)$, respectively. The induction hypothesis implies the proposition.

\end{proof}

\begin{cor}
\label{cor: orientation}
If $\mult(\mu,or_\mu)\ne 0$, the map $\ev$ is injective on $\mathcal{M}_{\Delta}(\mu)$
and the blackboard orientation on $\mathcal{M}_{\Delta}(\mu)$ maps to $(-1)^n$
times the standard orientation of $(\RR^2)^{n-1}$.
\end{cor}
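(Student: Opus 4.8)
The plan is to read off both statements directly from Proposition \ref{prop: Structure of evaluation map}. For the injectivity claim, first I would note that $\ev$ restricted to the top-dimensional face $\mathcal{M}_{\Delta}(\mu)$ is linear in the coordinates $(\{l_e\}_{e\in E^0}, h(v_*))$ by the first part of that proposition, and that its determinant in these coordinates equals $(-1)^n\mult(\mu, or_\mu)$, which is nonzero by hypothesis. A linear map between vector spaces of the same dimension with nonzero determinant is a bijection, so in particular $\ev\restrict{\mu}$ is injective on the open cell $\mathcal{M}_{\Delta}(\mu)$.

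For the orientation statement, I would recall that the blackboard orientation $or_\mu$ on $\mathcal{M}_{\Delta}(\mu)$ is by definition the one for which $\mult(\mu, or_\mu) > 0$ (see the discussion following equation \eqref{eq: mult}). With respect to this choice, the coordinates $(\{l_e\}_{e\in E^0}, h(v_*))$ — ordered by a sign-ordering of $E^0$ representing $or_\mu$, completed by the standard orientation of the $\RR^2$-factor — are positively oriented on $\mathcal{M}_{\Delta}(\mu)$, and the standard coordinates on $(\RR^2)^{n-1}$ are positively oriented on the target. In these positively oriented coordinates the Jacobian determinant of $\ev\restrict{\mu}$ is $(-1)^n\mult(\mu, or_\mu)$, whose sign is exactly $(-1)^n$ since $\mult(\mu, or_\mu) > 0$ for the blackboard orientation. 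Hence $\ev$ carries the blackboard orientation of $\mathcal{M}_{\Delta}(\mu)$ to $(-1)^n$ times the standard orientation of $(\RR^2)^{n-1}$.

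Since the two assertions are essentially immediate translations of Proposition \ref{prop: Structure of evaluation map} together with the definition of the blackboard orientation, there is no real obstacle here — the only point requiring a moment's care is bookkeeping of signs: making sure the orientation conventions on the source (a sign-ordering of $E^0$ plus standard orientation of $\RR^2$) and on the target $(\RR^2)^{n-1}$ are the ones used when the determinant $(-1)^n\mult(\mu, or_\mu)$ was computed, so that "positive determinant" genuinely corresponds to "orientation-preserving". Given the explicit coordinate identifications fixed in Sections \ref{subsec: Moduli and orientation} and \ref{subsec: markedcurves}, this is routine, and the corollary follows.
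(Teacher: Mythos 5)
Your proposal is correct and follows exactly the route the paper intends: the corollary is stated without a separate proof precisely because it is an immediate consequence of Proposition \ref{prop: Structure of evaluation map} (linearity of $\ev\restrict{\mu}$ with determinant $(-1)^n\mult(\mu,or_\mu)$) together with the definition of the blackboard orientation as the one making $\mult(\mu,or_\mu)>0$. Your care about matching the orientation conventions on source and target to those used in computing the determinant is exactly the right (and only) point to check.
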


\section{Homology of moduli spaces}
\label{sec: homology}
In this section we study the top homology of the compactified moduli space
$\widehat{\mathcal{M}}_{\Delta,n,n-1}$ and identify it with a certain quotient
of the space of $(n,n-1)$-trees.

Note that cells of the polyhedral complex $\mathcal{M}_{\Delta,n,n-1}$ are not compact;
we shall compactify this complex as follows.
Since $\mathcal{M}_{\Delta,n,n-1}$ is locally compact and Hausdorff, it admits a one-point compactification
$\widehat{\mathcal{M}}_{\Delta,n,n-1} = \mathcal{M}_{\Delta,n,n-1}\cup\{C_\infty\}$.
Here the ``infinite'' point $C_\infty$ corresponds to all sequences of rigid $(n-1)$-marked
curves such that some of their lengths $l_e$, $e\in E^0$ tend to $+\infty$.

Formula \eqref{eqn: gluings} implies that the top homology
$H_{2n-2}(\widehat{\mathcal{M}}_{\Delta,n,n-1};\QQ)$
is isomorphic to a $\QQ$-vector space spanned by oriented compactified closed cells
$\widehat{\mathcal{M}}_{\Delta}(\mu)=\overline{\mathcal{M}}_{\Delta}(\mu)\cup\{C_\infty\}$
for all combinatorial types $\mu$ with $\codim(\mu) = 0$ modulo gluing relations, dictated by 
gluings in \eqref{eqn: gluings} along faces of codimension one.
Note that such faces correspond to a contraction of some bounded edge in a graph of type $\mu$.
Since both cells and gluing relations are encoded by combinatorial types of graphs, the space
$H_{2n-2}(\widehat{\mathcal{M}}_{\Delta,n,n-1};\QQ)$ is isomorphic to the $\QQ$-vector space
spanned by combinatorial types $\mu$ of trivalent trees appearing in \eqref{eqn: gluings} modulo
relations arising from identifications of common combinatorial types of graphs $\mu_e$, obtained
by contractions of a bounded edge $e$.
This is a certain genus-$0$ version of Kontsevich's graph homology, see \cite{K}.

\subsection{Gluings of top-dimensional faces}
\label{subsec: gluings of faces}
Recall once again that for an $(n,n-1)$-tree $\G$ of type $\mu$ every connected component of
$\G\minus E^m$ has exactly one leg.
Thus there are no bounded edges in $\G$ with both ends in marked vertices; the only types of
bounded edges of $\G$ are those incident either to two unmarked vertices or to one marked and
one unmarked vertex. Let us call them edges of type I and type II, respectively.
It follows that there are only two kinds of combinatorial types $\mu_e$ on the boundary of
$\overline{\mathcal{M}}_{\Delta}(\mu)$ with $\codim(\mu_e) = 1$ corresponding to a
contraction of a bounded edge $e$ of type I or II, respectively.
We shall denote the corresponding parts of the boundary of $\overline{\mathcal{M}}_{\Delta}(\mu)$
by $\partial_{\I}\overline{\mathcal{M}}_{\Delta}(\mu)$ and
$\partial_{\II}\overline{\mathcal{M}}_{\Delta}(\mu)$, respectively.
Let us describe combinatorial types $\mu$ that degenerate to a fixed combinatorial type $\nu$ in
$\partial_{\I}\overline{\mathcal{M}}_{\Delta}(\mu)$ or
$\partial_{\II}\overline{\mathcal{M}}_{\Delta}(\mu)$.
A graph of such a combinatorial type $\nu$ is a rigid tree with all trivalent vertices, except for
one four-valent vertex.
If we forget the rigidity condition, then there are three combinatorial types of trivalent trees degenerating
to $\nu$, obtained from $\nu$ by splitting the four-valent vertex into two adjacent trivalent vertices.
Orienting all edges as above, we see that for $\nu\in\partial_{\I}\overline{\mathcal{M}}_{\Delta}(\mu)$
all three types are rigid, see Figure \ref{fig: boundary I}.

\begin{figure}[htb]
\includegraphics[width=4.5in]{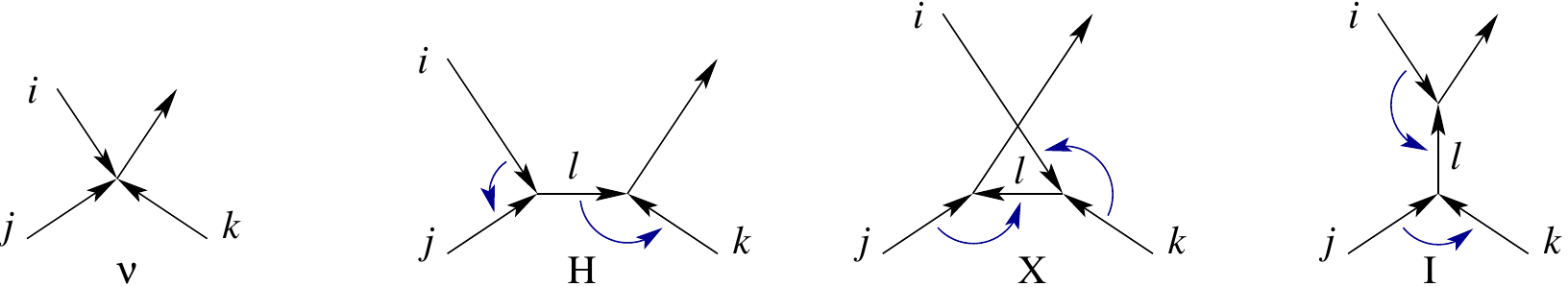}
\caption{Boundary of type I.
\label{fig: boundary I}}
\end{figure}

For $\nu\in\partial_{\II}\overline{\mathcal{M}}_{\Delta}(\mu)$, however, one of these trees
(the marked $I$-graph) does not satisfy the rigidity condition of Definition \ref{def:  rigid curves},
see Figure \ref{fig: boundary II}.

\begin{figure}[htb]
\includegraphics[width=5in]{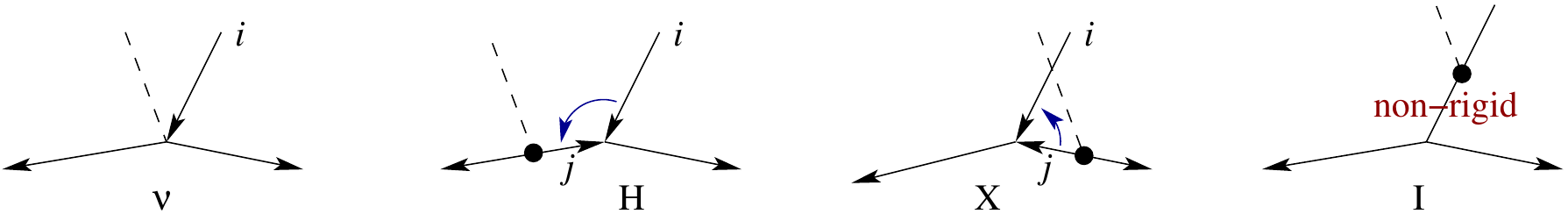}
\caption{Boundary of type II.
\label{fig: boundary II}}
\end{figure}

Thus in the space $\widehat{\mathcal{M}}_{\Delta,n,n-1}$ top-dimensional faces are glued together
along their common boundaries in triples in the case of type I and in pairs in the case of type II.
See Figure \ref{fig: gluing}.

\begin{figure}[htb]
\includegraphics[width=5in]{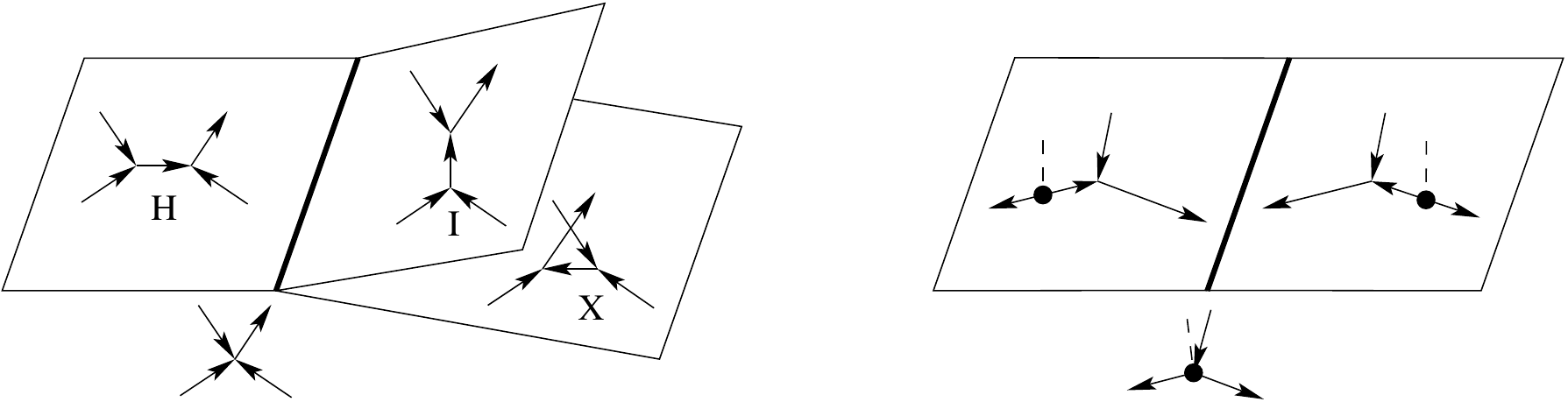}
\caption{Gluing faces along common boundaries.
\label{fig: gluing}}
\end{figure}

To understand the gluing relations we should determine which orientations of top-dimensional
faces induce the same orientation on their common boundary of codimension one.
A comparison of Lie-orientations of $IHX$-graphs with orderings of half-edges shown in
Figure \ref{fig: boundary I} gives
$\left(e_i\wedge e_j\right)\wedge \left(e_l\wedge e_k\right)$ for the $H$-graph,
$\left(e_j\wedge e_l\right)\wedge \left(e_k\wedge e_i\right)$ for the $X$-graph, and
$\left(e_j\wedge e_k\right)\wedge \left(e_i\wedge e_l\right)$ for the $I$-graph, so
these Lie-orientations of $X$ and $I$-graphs induce the same orientation on the
common boundary face, while the orientation induced by the $H$-graph is the
opposite.
Similarly, a comparison of Lie-orientations with orderings of half-edges shown in
Figure \ref{fig: boundary II} gives $e_i\wedge e_j$ for the marked $H$-graph
and $e_j\wedge e_i$  for the marked $X$-graph, so these Lie-oriented graphs
induce opposite orientations on the common boundary face.

Orientations of top-dimensional faces and combinatorics of their gluings along
$\partial_{\I}\overline{\mathcal{M}}_{\Delta}(\mu)$ and
$\partial_{\II}\overline{\mathcal{M}}_{\Delta}(\mu)$ motivates the following definition:

\begin{defn}
Let $J^{rig}_{n,n-1}$ be the $\QQ$-vector space spanned by isomorphism classes of
$(n,n-1)$-trees modulo the following antisymmetry (AS), Jacobi (IHX) and marked point (MP) relations:
\begin{equation}\label{eq: AS}
\raisebox{-0.5\height}{\includegraphics[height=0.5in]{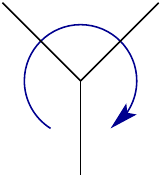}}\  =\ - \
\raisebox{-0.5\height}{\includegraphics[height=0.5in]{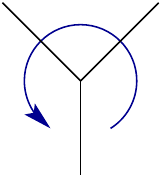}}
\tag{AS}
\end{equation}
\begin{equation}\label{eq: IHX}
\raisebox{-0.5\height}{\includegraphics[height=0.45in]{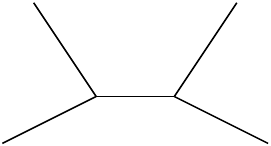}}\  - \
\raisebox{-0.5\height}{\includegraphics[height=0.45in]{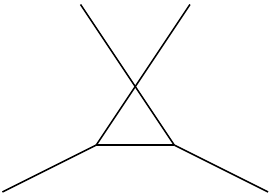}}\  - \
\raisebox{-0.5\height}{\includegraphics[height=0.45in]{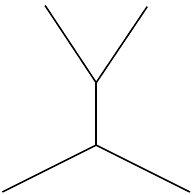}}\  = 0
\tag{IHX}
\end{equation}
\begin{equation}\label{eq: MP}
\raisebox{-0.5\height}{\includegraphics[height=0.5in]{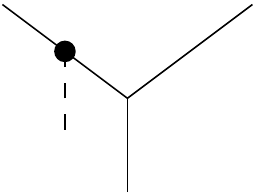}}\  -\
\raisebox{-0.5\height}{\includegraphics[height=0.5in]{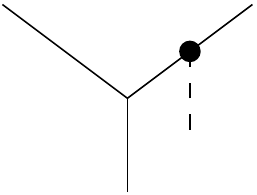}}\ = 0
\tag{MP}
\end{equation}
\end{defn}
Here, in each relation, we assume that the graphs (including cyclic orders at each unmarked
vertex) are the same outside the indicated fragments and all unmarked vertices in figures
are oriented counterclockwise unless shown otherwise.

Summarizing analysis in Section \ref{sec: homology} we conclude
\begin{thm}
\label{thm: Jacobi homology}
$H_{2n-2}(\widehat{\mathcal{M}}_{\Delta,n,n-1};\QQ)$ is isomorphic to the space $J^{rig}_{n,n-1}$
of $(n,n-1)$-trees.
\end{thm}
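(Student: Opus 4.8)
The plan is to assemble the proof from the pieces already laid out in Sections~\ref{sec: moduli} and~\ref{sec: homology}. First I would recall from \eqref{eqn: gluings} that $\widehat{\mathcal{M}}_{\Delta,n,n-1}$ is the one-point compactification of a pure $2(n-1)$-dimensional polyhedral complex whose top cells are the $\overline{\mathcal{M}}_\Delta(\mu)\cong(\RR_{\ge 0})^{2n-2}\times\RR^2$ for $\codim(\mu)=0$, glued along codimension-one faces $\mathcal{M}_\Delta(\mu_e)$ coming from contractions of bounded edges. The key dimensional input is that all cells of codimension $\ge 2$ form a subcomplex of dimension $\le 2n-4$, which contributes nothing to $H_{2n-2}$; hence the simplicial/cellular chain computation of top homology reduces to the two top layers, and $H_{2n-2}$ is the kernel of the boundary map from the free $\QQ$-module on oriented top cells to the free $\QQ$-module on oriented codimension-one cells. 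This is the standard ``top homology = top cycles'' argument for a locally finite polyhedral pseudomanifold-like complex; I would phrase it using Borel--Moore/compactly-supported chains on $\mathcal{M}_{\Delta,n,n-1}$, which is why the one-point compactification enters — adding $C_\infty$ turns the non-compact cells into cycles and makes ordinary homology agree with the combinatorial count.

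Next I would identify the two chain groups and the boundary operator explicitly in graph-theoretic terms. A choice of orientation on a top cell $\mathcal{M}_\Delta(\mu)$ is, by the Proposition in Section~\ref{subsec: faces}, the same as a Lie-orientation of the corresponding $(n,n-1)$-tree, so the group of oriented top chains is spanned by $(n,n-1)$-trees with the relation that reversing the Lie-orientation negates the generator; this is precisely the (AS) relation. The boundary of $\widehat{\mathcal{M}}_\Delta(\mu)$ is, up to the $C_\infty$ contributions which cancel in pairs along the noncompact directions, the signed sum over bounded edges $e$ of $\overline{\mathcal{M}}_\Delta(\mu_e)$; by the analysis in Section~\ref{subsec: gluings of faces} a codimension-one cell $\mathcal{M}_\Delta(\nu)$ (a tree with one four-valent vertex) sits on the boundary of exactly the trivalent trees obtained by splitting that vertex — three of them in the type~I case ($H$, $X$, $I$) and two in the type~II case (marked $H$, marked $X$) — the marked $I$-tree being non-rigid and therefore absent. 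Thus a chain $\sum c_\mu\,[\mu]$ is a cycle iff, for every such $\nu$, the boundary contributions cancel; the induced-orientation computation in Section~\ref{subsec: gluings of faces} shows the $X$- and $I$-orientations agree on the common face while the $H$-orientation is opposite, giving exactly the relation $[H]-[X]-[I]=0$, i.e.\ (IHX), and likewise $[\text{marked }H]-[\text{marked }X]=0$, i.e.\ (MP). Therefore the cycle space is the quotient of the AS-normalized span of $(n,n-1)$-trees by the (IHX) and (MP) relations, which is by definition $J^{rig}_{n,n-1}$.

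The remaining step is to promote this ``cycles'' description to an isomorphism with $H_{2n-2}$, i.e.\ to check there are no further relations coming from boundaries of lower cells and that every combinatorial cycle is realized by a genuine homology class. For the first point I would note that $H_{2n-2}$ of a complex is computed as $\ker\partial_{2n-2}/\operatorname{im}\partial_{2n-1}$, and here $\partial_{2n-1}=0$ because there are no cells of dimension $2n-1$ — the polyhedral complex has cells only in even... one must be slightly careful, since cells of codimension one do exist, so I would instead observe that the relevant complex in this degree is $C_{2n-2}\xleftarrow{\partial}C_{2n-3}$ has no effect on $\ker(\partial:C_{2n-2}\to C_{2n-3})$ being the top homology once we know $C_{2n-1}$ in the cellular chain complex vanishes, which is false; the honest statement is that $H_{2n-2}$ is a \emph{sub}quotient. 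The cleanest route is Poincaré--Lefschetz-type duality or a direct colimit argument: $\widehat{\mathcal{M}}$ is built as an increasing union of its top cells glued along codimension-one faces, each top cell is contractible and each codimension-one gluing locus is contractible, so a Mayer--Vietoris / spectral-sequence argument collapses and identifies $H_{2n-2}$ with the $E^1$-term in degree $0$ of the Čech-type spectral sequence of the cover by top cells — which is exactly the cokernel-free cycle computation above.

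The main obstacle, I expect, is precisely this last point: rigorously justifying that the higher-codimension strata and the point $C_\infty$ contribute nothing to $H_{2n-2}$, and that the combinatorial cycle space maps \emph{isomorphically} (not just injectively) onto top homology. Concretely one must rule out (a) extra cycles supported partly on codimension-$\ge 2$ cells — handled by the dimension bound $\le 2n-4$ on that subcomplex — and (b) extra relations among top cells coming from $2$-chains through codimension-one faces, i.e.\ one must verify the boundary map $C_{2n-2}\to C_{2n-3}$ has image whose preimage structure is faithfully captured by the $H/X/I$ and marked-$H/X$ incidences, with the correct signs; tracking the $C_\infty$ terms (the ``ends at infinity'' of the noncompact $\RR_{\ge0}$-factors) so that they cancel is the fiddly part. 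Once the sign bookkeeping from Section~\ref{subsec: gluings of faces} is in hand, though, the identification with $J^{rig}_{n,n-1}$ is immediate, since the defining relations (AS), (IHX), (MP) were reverse-engineered from exactly these incidences.
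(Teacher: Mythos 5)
Your proof follows the paper's own argument essentially verbatim: the paper identifies $H_{2n-2}(\widehat{\mathcal{M}}_{\Delta,n,n-1};\QQ)$ with the oriented top cells modulo the gluing relations along codimension-one faces, and its Section on gluings of top-dimensional faces carries out exactly the $H$/$X$/$I$ and marked-$H$/$X$ orientation comparison you describe, producing (AS), (IHX) and (MP). The only place you go astray is the self-doubt about $C_{2n-1}$: since the complex has dimension $2n-2$ there are \emph{no} cells of dimension $2n-1$ (the codimension-one cells live in dimension $2n-3$ and enter only through the kernel condition, not through boundaries \emph{into} degree $2n-2$), so $H_{2n-2}=\ker\partial_{2n-2}$ outright and the Mayer--Vietoris/spectral-sequence detour you propose as ``the main obstacle'' is unnecessary --- your first instinct was correct and the parenthetical ``which is false'' is itself false.
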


Note that while the moduli space itself explicitly depends on the $\Delta$-set, the structure of
its top homology $H_{2n-2}(\widehat{\mathcal{M}}_{\Delta,n,n-1};\QQ)$ depends only on $|\Delta|=n$.
Unfortunately, the number of generators of the vector space $J^{rig}_{n,n-1}$ rapidly grows with
$n$: for $n=3$ up to a choice of a Lie-orientation there are six $(3,2)$-trees (which correspond to six
top-dimensional faces $\mu_{ij}$ of $\mathcal{M}_{\Delta,3,2}$, see Example \ref{ex: tripod moduli}),
while for $n=4$ there are already 48 corresponding $(4,3)$-trees.
While these are convenient for understanding the polyhedral structure of ${\mathcal{M}}_{\Delta,n,n-1}$,
top homology $H_{2n-2}(\widehat{\mathcal{M}}_{\Delta,n,n-1};\QQ)$ can be better understood in
terms of unmarked trees.
Indeed, relations (MP) allow one to significantly reduce the number of generators, identifying
rigid trees which differ by positions of marked points. This leads to an alternative definition of
the vector space $J^{rig}_{n,n-1}$.

Namely, define a forgetful ``marking-removal'' map $f$ as follows: remove all open marked legs
and ``smooth out'' the resulting marked 2-valent vertices of an $(n,n-1)$-tree $T$ to obtain a
trivalent tree $f(T)$ with $n$ ordered unmarked legs.
A Lie-orientation of $f(T)$ is induced from $T$.
\begin{defn}
\label{def: Jacobi}
The {\em Jacobi space} $J_n$ is the $\QQ$-vector space spanned by isomorphism
classes of Lie-oriented trivalent trees with $n$ (unmarked) legs modulo the antisymmetry
(AS) and Jacobi (IHX) relations.
\end{defn}

\begin{figure}[htb]
\includegraphics[width=5in]{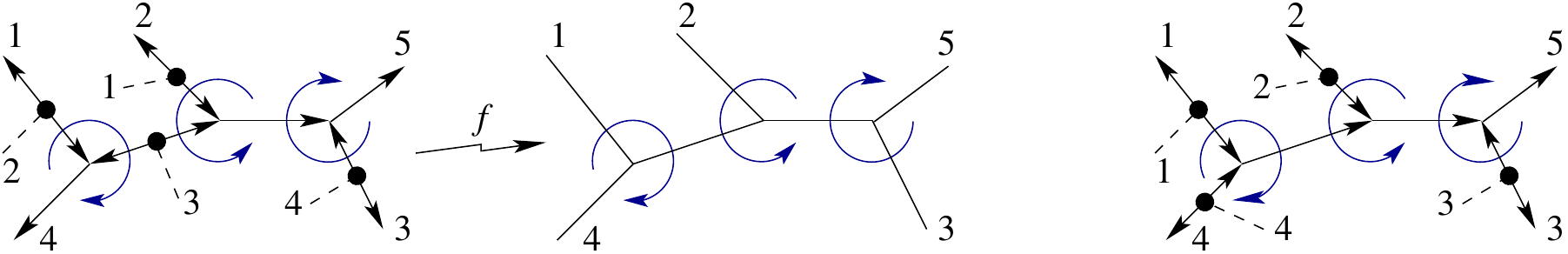}\\
\hspace{0.9in}a\hspace{2.7in}b
\caption{Marking-removal and marked canonical representatives.
\label{fig: forgetful map}}
\end{figure}

\begin{lem}
\label{lem: forgetful map}
The quotient map $f$ defines an isomorphism $f: J^{rig}_{n,n-1}\to J_n$ of vector spaces.
\end{lem}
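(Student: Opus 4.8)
The plan is to exhibit an explicit inverse to $f$ by choosing a canonical way to insert marked legs into an unmarked trivalent tree, and then check that the two induced maps are well-defined on the quotients and are mutually inverse. First I would observe that $f$ is manifestly surjective: given any Lie-oriented trivalent tree $T_0$ with $n$ unmarked legs, I can place a marked $2$-valent vertex on each of the $n-1$ legs $e_1,\dots,e_{n-1}$ (say, the first $n-1$ legs in the fixed ordering) and attach a marked leg there; this produces an $(n,n-1)$-tree mapping to $T_0$ under $f$. So the content is injectivity, or equivalently the construction of a one-sided inverse that descends to the quotient. I would call this canonical marked representative $g(T_0)$, with the Lie-orientation of $g(T_0)$ simply inherited from that of $T_0$ (the $n-2$ unmarked vertices are unchanged).

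Next I would verify that $f$ respects the defining relations, so that it descends: an (AS) or (IHX) relation among $(n,n-1)$-trees maps under $f$ to the corresponding (AS) or (IHX) relation among trivalent trees (the marking-removal map only touches the marked $2$-valent vertices, which are disjoint from the fragments appearing in (AS) and (IHX)), and an (MP) relation maps to $0=0$ since the two sides of (MP) differ only by the position of a marked leg, which $f$ erases. Hence $f$ is a well-defined linear map $J^{rig}_{n,n-1}\to J_n$, and by the previous paragraph it is surjective. Conversely I would check that $g$ descends to a linear map $J_n\to J^{rig}_{n,n-1}$: an (AS) relation between trivalent trees, after inserting the canonical marked legs, becomes an (AS) relation between the corresponding $(n,n-1)$-trees, and likewise (IHX) goes to (IHX), because the canonical insertion of marked $2$-valent vertices onto legs commutes with the local surgeries defining those relations (the surgeries happen at unmarked vertices or along unmarked edges, away from the legs where we inserted marks). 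Thus $g:J_n\to J^{rig}_{n,n-1}$ is well-defined.

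Finally I would establish that $f$ and $g$ are mutually inverse in the quotients. The composition $f\circ g$ is the identity on the nose at the level of generators: inserting the canonical marked legs and then removing all marked legs returns the original trivalent tree with its orientation. For $g\circ f$, the issue is that an arbitrary $(n,n-1)$-tree $T$ need not have its marked legs in the canonical positions; I would use the (MP) relations repeatedly to slide each marked leg of $T$ along the edges of $T$ to its canonical position on the appropriate unmarked leg. The key point here — and the main technical obstacle — is to make precise that such a sequence of (MP) moves always exists and terminates, i.e. that within a fixed connected component of $\G\minus E^m$ (which, by rigidity, is a tree with a unique leg) any marked $2$-valent vertex can be transported by (MP) moves to a prescribed edge, with the signs and cyclic orders tracked correctly so that the net effect is exactly $g(f(T))$ as classes in $J^{rig}_{n,n-1}$. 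Once this sliding lemma is in place, $g\circ f=\mathrm{id}$ on generators modulo (MP), hence on $J^{rig}_{n,n-1}$, and the two maps are inverse isomorphisms of $\QQ$-vector spaces. I would expect the only subtlety to be this bookkeeping of orientations under the (MP) moves; everything else is a direct check that local surgeries commute with marking-removal.
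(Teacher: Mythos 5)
Your proposal follows essentially the same route as the paper: both arguments reduce every $(n,n-1)$-tree, via the (MP) relations, to a canonical representative with each marked vertex $z_i$ placed adjacent to the unmarked leg $e_i$, and then identify these canonical representatives with unmarked Lie-oriented trivalent trees so that the surviving (AS) and (IHX) relations match those defining $J_n$. The "sliding lemma" you isolate as the main technical point is exactly the step the paper dispatches with "it is easy to check that relations (MP) allow one to identify any other representative of this class with $T_*$", so your write-up is a correct (and somewhat more explicitly structured) version of the paper's proof.
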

\begin{proof}
In each equivalence class $T$ of Lie-oriented rigid marked trees we choose as a canonical
representative a Lie-oriented tree $T_*$ with each marked vertex $z_i$, $i=1,\dots,n-1$
adjacent to the corresponding unmarked leg $e_i$ (except for the last unmarked leg $e_n$).
See Figure \ref{fig: forgetful map}b.
It is easy to check that relations (MP) allow one to identify any other representative of this class
with $T_*$. Therefore out of all (AS) and (IHX) relations we may leave only relations between
canonical representatives. The identification of unmarked trees in $J_n$ with canonical
representatives in $J^{rig}_{n,n-1}$ gives the desired isomorphism.
\end{proof}

\begin{cor}
\label{cor: Jacobi homology}
$H_{2n-2}(\widehat{\mathcal{M}}_{\Delta,n,n-1};\QQ)$ is isomorphic to the Jacobi space $J_n$
of Lie-oriented trivalent trees.
\end{cor}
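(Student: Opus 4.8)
Corollary \ref{cor: Jacobi homology} follows immediately by combining the two preceding results.

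The plan is to simply chain together Theorem \ref{thm: Jacobi homology} and Lemma \ref{lem: forgetful map}. First I would invoke Theorem \ref{thm: Jacobi homology}, which gives a natural isomorphism $H_{2n-2}(\widehat{\mathcal{M}}_{\Delta,n,n-1};\QQ) \cong J^{rig}_{n,n-1}$. Then I would invoke Lemma \ref{lem: forgetful map}, which exhibits the marking-removal map $f$ as an isomorphism $J^{rig}_{n,n-1} \xrightarrow{\sim} J_n$. Composing these two isomorphisms yields the claimed identification $H_{2n-2}(\widehat{\mathcal{M}}_{\Delta,n,n-1};\QQ) \cong J_n$. There is essentially no content beyond the composition; the proof is a one-line corollary of the two results just established.

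If one wanted to be slightly more careful about which direction the isomorphism goes, one would note that $f$ as defined is a quotient map sending a marked tree to its underlying unmarked trivalent tree, and Lemma \ref{lem: forgetful map} asserts it descends to an isomorphism on the level of the span modulo (AS), (IHX), and (MP); so the composite $f \circ (\text{identification from Theorem \ref{thm: Jacobi homology}})$ is the desired isomorphism, with the (MP) relations being precisely what is ``used up'' in passing from rigid marked trees to canonical representatives. There is no real obstacle here: the only thing to observe is that both cited statements have already been proved, so the corollary is immediate.
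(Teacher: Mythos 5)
Your proof is correct and matches the paper's (implicit) argument exactly: the corollary is stated immediately after Lemma \ref{lem: forgetful map} with no separate proof, being the composition of the isomorphism $H_{2n-2}(\widehat{\mathcal{M}}_{\Delta,n,n-1};\QQ)\cong J^{rig}_{n,n-1}$ from Theorem \ref{thm: Jacobi homology} with the forgetful isomorphism $f: J^{rig}_{n,n-1}\to J_n$ from Lemma \ref{lem: forgetful map}. Your additional remark about the (MP) relations being absorbed in passing to canonical marked representatives is consistent with the proof of that lemma.
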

\begin{ex}
\label{ex: Jacobi spaces}
For $n=3$ there is only one type of trivalent trees, so the space $J_3$ is one-dimensional;
and indeed $H_4(\widehat{\mathcal{M}}_{\Delta,3,2};\QQ)=H_4(S^4;\QQ)=\QQ$, see
Example \ref{ex: tripod moduli}.

For $n=4$ the space $J_4$ is generated by three types $I$, $H$, $X$ of trivalent trees
with four ordered legs modulo the (IHX) relation. The combinatorics of this space is
described by the tripod graph, see Figure \ref{fig: Jacobi spaces}a.
Thus $H_6(\widehat{\mathcal{M}}_{\Delta,4,3};\QQ)\cong J_4$ is two-dimensional.
Two generating cycles are e.g. $Z_{HX}$ and $Z_{HI}$ with $Z_{HX}(H)=Z_{HX}(X)=1$,
$Z_{HX}(I)=0$ and $Z_{HI}(H)=Z_{HI}(I)=1$, $Z_{HI}(X)=0$.

For $n=5$ the space $J_5$ is generated by 15 types of trivalent trees with five ordered legs,
modulo 10 (IHX) relations. The combinatorics of this space is described by the Petersen graph,
see Figure \ref{fig: Jacobi spaces}b.
Thus $H_8(\widehat{\mathcal{M}}_{\Delta,5,4};\QQ)\cong J_5$ is 6-dimensional.
Some generating cycles related to an enumeration of certain curves will be constructed in
Example \ref{ex: caterpillar cycles}.
\end{ex}

\begin{figure}[htb]
\includegraphics[width=5in]{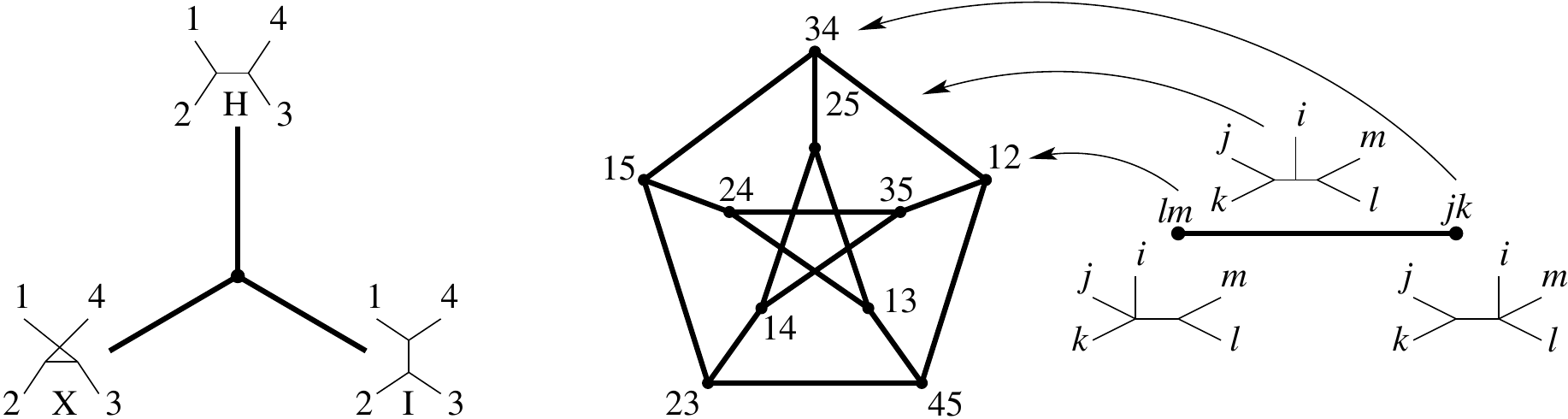}\\
a\hspace{2.7in}b\hspace{0.8in}
\caption{Combinatorics of spaces  $J_n$ for $n=4,5$.
\label{fig: Jacobi spaces}}
\end{figure}

\subsection{Weighted count of rigid \PTCs}
\label{subsec: count of PTC}
In this section we study an enumerative problem of a weighted count of rational irreducible \PTCs\
with a fixed $\Delta$-set  $\Delta$ passing through a collection $\mathbf{p}\subset(\RR^2)^m$
of $m=n-1$ points in general position.
For this purpose we interpret a weighted count of curves as a generalized degree of the evaluation map.

Extend $\ev$ to a map
$$\widehat{\ev}:\widehat{\mathcal{M}}_{\Delta,n,n-1}\to(\RR^2)^{n-1}\cup\{\infty\}\cong\SS^{2n-2}$$
by $\widehat{\ev}(C_\infty)=\infty$.
Unfortunately, $\widehat{\ev}$ is not continuous:
\begin{ex}
\label{ex: discontinuous}
Let $C_t$, $t>0$ be a family of degenerate curves of Example \ref{ex: rigid} with
$\Delta=\{(-1,0),(-1,0),(2,0)\}$, $m=n-1=2$, both marked vertices $z_1$ and $v_*=z_2$
mapped to $h(z_1)=h(z_2)=(0,0)$ and the unmarked vertex $v$ mapped to $h(v)=(t,0)$.
We have $(l_1,l_2,h(v_*))=(t,t,(0,0))$ and $\ev(C_t)=((0,0),(0,0))\in (\RR^2)^2$, so
$C_t\to C_\infty$ as $t\to\infty$, but $\ev(C_t)\to ((0,0),(0,0))\ne\infty$.
\end{ex}

This, however, cannot happen if one considers only non-degenerate curves. Indeed,

\begin{lem}
\label{lem: ev_continuous}
Let $C_t$ be a family of curves in a top-dimensional face $\overline{\mathcal{M}}_{\Delta}(\mu)$
with $\mult(\mu, or_\mu)\ne 0$ such that $C_t\to C_\infty$ as $t\to\infty$. Then  $\ev(C_t)\to\infty$.
\end{lem}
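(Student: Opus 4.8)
The plan is to exploit the fact that on a top-dimensional face with $\mult(\mu, or_\mu)\ne 0$ the evaluation map $\ev\restrict{\mu}$ is a \emph{linear isomorphism} (by Proposition \ref{prop: Structure of evaluation map}, its determinant is $(-1)^n\mult(\mu,or_\mu)\ne 0$). Under the identification $\overline{\mathcal{M}}_{\Delta}(\mu)\cong(\RR_{\ge 0})^{|E^0|}\times\RR^2$ the statement $C_t\to C_\infty$ means precisely that the coordinate vector of $C_t$ leaves every compact subset of $(\RR_{\ge 0})^{|E^0|}\times\RR^2$, i.e.\ $\|C_t\|\to\infty$ in the obvious Euclidean norm; and $\ev(C_t)\to\infty$ in $\SS^{2n-2}$ means $\|\ev(C_t)\|\to\infty$ in $(\RR^2)^{n-1}$. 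So the whole lemma reduces to the elementary linear-algebra fact that an injective linear map between finite-dimensional normed spaces is proper: if $A$ is the (invertible) matrix of $\ev\restrict{\mu}$, then $\|Av\|\ge \sigma_{\min}(A)\,\|v\|$ where $\sigma_{\min}(A)>0$ is the least singular value, hence $\|v\|\to\infty$ forces $\|Av\|\to\infty$.

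Concretely I would proceed as follows. First, fix an ordering of $E^0$ and a root vertex, so that $\overline{\mathcal{M}}_{\Delta}(\mu)$ is identified with the closed polyhedron $(\RR_{\ge 0})^{|E^0|}\times\RR^2\subset\RR^{|E^0|+2}=\RR^{2n}$, and $\ev\restrict{\mu}$ becomes an affine (in fact, since the root contributes a translation, genuinely linear in the chosen coordinates) map to $(\RR^2)^{n-1}=\RR^{2n-2}$; wait—the dimensions are $2n$ versus $2n-2$, so let me instead keep the root coordinate $h(v_*)$ on both sides: $\ev$ sends $(l_e)_{e\in E^0}, h(v_*)$ to $\big(h(v_*)+\sum_{e\in C_i}l_e\xi(e)\big)_{i=1}^{n-1}$, which is a linear map $\RR^{2m-2}\times\RR^2\to(\RR^2)^{n-1}$ between spaces of equal dimension $2n-2$, with determinant $(-1)^n\mult(\mu,or_\mu)\ne 0$. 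Second, observe that $C_t\to C_\infty$ in the one-point compactification $\widehat{\mathcal{M}}_{\Delta,n,n-1}$ is equivalent to: the image of $C_t$ under the identification map leaves every compact subset of $(\RR_{\ge 0})^{|E^0|}\times\RR^2$, which, since this polyhedron is closed in $\RR^{2n-2}$, is equivalent to $\|C_t\|\to\infty$ for the ambient Euclidean norm. Third, apply the singular-value bound $\|\ev\restrict{\mu}(v)\|\ge\sigma_{\min}\|v\|$ with $\sigma_{\min}>0$ (which exists precisely because $\det(\ev\restrict{\mu})\ne0$), conclude $\|\ev(C_t)\|\to\infty$, and recall that $\|x\|\to\infty$ in $(\RR^2)^{n-1}$ is exactly $x\to\infty$ in $\SS^{2n-2}$.

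The only genuinely delicate point is the second step—matching the abstract convergence $C_t\to C_\infty$ in the one-point compactification with the metric statement $\|C_t\|\to\infty$. This uses that $\overline{\mathcal{M}}_{\Delta}(\mu)$ is a \emph{closed} (though non-compact) subset of $\RR^{2n-2}$: a sequence converges to the added point $C_\infty$ of the one-point compactification of $\mathcal{M}_{\Delta,n,n-1}$ iff it eventually exits every compact subset of $\mathcal{M}_{\Delta,n,n-1}$, and if the sequence stays in the single closed face $\overline{\mathcal{M}}_{\Delta}(\mu)$ this means it exits every compact subset of that face, equivalently its coordinate norm diverges (a bounded sequence in the closed set $\overline{\mathcal{M}}_{\Delta}(\mu)$ has a convergent subsequence with limit still in the face, contradicting exit to $C_\infty$). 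Everything else is routine. I do not expect the degenerate phenomenon of Example \ref{ex: discontinuous} to interfere here precisely because $\mult(\mu,or_\mu)\ne 0$ is exactly the hypothesis that rules out the rank drop of $A$ responsible for that example.
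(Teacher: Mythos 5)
Your proposal is correct and follows essentially the same route as the paper: both reduce the lemma to the properness of the invertible linear map $\ev\restrict{\mu}$ via the bound $\|Ax\|\ge\frac{1}{\|A^{-1}\|}\|x\|$ (your $\sigma_{\min}$ estimate is the same inequality), relying on Proposition \ref{prop: Structure of evaluation map} for invertibility. Your extra care in matching convergence to $C_\infty$ in the one-point compactification with divergence of the coordinate norm on the closed face is a point the paper leaves implicit, but it is not a different argument.
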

\begin{proof}
By Proposition \ref{prop: Structure of evaluation map}, the restriction $\ev\restrict{\mu}$
of $\ev$ to $\overline{\mathcal{M}}_{\Delta}(\mu)$ is an invertible linear map, so
$||(\ev\restrict{\mu})^{-1}||\ne 0$.
A standard inequality $||Ax||\ge\frac{1}{||A^{-1}||}\cdot ||x||$ for an invertible linear operator
$A:\RR^d\to\RR^d$ shows that $Ax\to\infty$ as $x\to\infty$ and implies the lemma.
\end{proof}

Let $\widehat{\mathcal{M}}_{\Delta,n,n-1}^{reg}$ be the subset of
$\widehat{\mathcal{M}}_{\Delta,n,n-1}$ corresponding to all non-degenerate curves; its finite part
is a polyhedral subcomplex of $\mathcal{M}_{\Delta,n,n-1}$ generated by all top-dimensional faces
$\mathcal{M}_{\Delta}(\mu)$ with $\mult(\mu, or_\mu)\ne 0$.
On the space $\widehat{\mathcal{M}}_{\Delta,n,n-1}^{reg}$ there is a preferred orientation, namely,
the blackboard orientation on each of its  top-dimensional faces.
The Lemma above shows that the restriction of $\widehat{\ev}$ to
$\widehat{\ev}:\widehat{\mathcal{M}}_{\Delta,n,n-1}^{reg}\to\SS^{2n-2}$ is continuous.
Therefore, for any cycle $Z\in Z_{2n-2}(\widehat{\mathcal{M}}_{\Delta,n,n-1};\QQ)$ supported
on $\widehat{\mathcal{M}}_{\Delta,n,n-1}^{reg}$ its push-forward
$\widehat{\ev}_*(Z)\in Z_{2n-2}(\SS^{2n-2};\QQ)$
is well-defined and defines a pushforward class $\widehat{\ev}_*([Z])\in H_{2n-2}(\SS^{2n-2};\QQ)$.
Denote
$$\deg_Z(\ev):=I_{\ss^{2n-2}}(\widehat{\ev}_*([Z]),[pt])$$ the intersection number of
$\widehat{\ev}_*([Z])$ with the class $[pt]$ of a point in $\SS^{2n-2}$.

\begin{defn}
Let $\cD\subset (\RR^2)^{n-1}$ is the union $\cup_\mu\ev(\mathcal{M}_{\Delta}(\mu))$ over
all types $\mu$ such that either $\codim(\mu)>0$ or $\ev$ is not injective on
$\mathcal{M}_{\Delta}(\mu)$ (i.e., $\mult(\mu, or_\mu)=0$).
We say that an (ordered) collection $\mathbf{p}=(p_1,\dots,p_{n-1})\in (\RR^2)^{n-1}\subset\SS^{2n-2}$
of $n-1$ points in $\RR^2$ is in {\em general position}\footnote{
   Note that by Proposition \ref{prop: Structure of evaluation map} the set $\cD$ is of measure
   $0$ in $\SS^{2n-2}$; indeed, $\ev\restrict{\mu}$ is linear, so if it is not injective on some face,
   the dimension of its image drops at least by one.}
if $\mathbf{p}\notin\cD$.
\end{defn}

For such $\mathbf{p}$ denote by $N_{\Delta}(\mu,\mathbf{p})$ the (geometric) number
of rigid $(n-1)$-marked \PTCs\ of a combinatorial type $\mu$ with a $\Delta$-set  $\Delta$
through $\mathbf{p}$, i.e., with $h(z_i)=p_i$, $i=1,\dots,n-1$.
Note that $N_{\Delta}(\mu,\mathbf{p})$ equals ether 0 or 1 for any $\mu$; indeed, if
$\mult(\mu, or_\mu)=0$ then $N_{\Delta}(\mu,\mathbf{p})=0$  by definition of general
position and if $\mult(\mu, or_\mu)\ne 0$ then $\ev\restrict{\mu}$ is injective.

By the invariance of the homological intersection number and Corollary \ref{cor: orientation}
we finally obtain

\begin{thm}
\label{thm: count}
Let $\mathbf{p}\in(\RR^2)^{n-1}\minus\cD$ be a collection of $n-1$ points in $\RR^2$ in
general position and let $Z=\sum_{\mu}Z_\mu\cdot\widehat{\mathcal{M}}_{\Delta}(\mu)
\in Z_{2n-2}(\widehat{\mathcal{M}}_{\Delta,n,n-1};\QQ)$ be a cycle supported on
$\widehat{\mathcal{M}}_{\Delta,n,n-1}^{reg}$, equipped with the blackboard orientation.
Then the $Z$-weighted number
\begin{equation}
\label{eq: N_DeltaZ}
N_{\Delta,Z}(\mathbf{p}):=\sum_\mu Z_\mu\cdot N_{\Delta}(\mu,\mathbf{p})
\end{equation}
of curves through $\mathbf{p}$ does not depend on the collection $\mathbf{p}$ and
equals to $(-1)^n\deg_Z(\ev)$.
\end{thm}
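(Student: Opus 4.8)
The plan is to compute $\deg_Z(\ev)$ directly from the definition as an intersection number in $\SS^{2n-2}$ and match it, term by term over combinatorial types $\mu$, with the weighted count $N_{\Delta,Z}(\mathbf{p})$. First I would fix a generic collection $\mathbf{p}\in(\RR^2)^{n-1}\minus\cD$, regarded as a point of $\SS^{2n-2}$; genericity (together with the footnote observation that $\cD$ has measure zero) guarantees that $\mathbf{p}$ is a regular value of $\widehat{\ev}$ restricted to $\widehat{\mathcal{M}}_{\Delta,n,n-1}^{reg}$, that it avoids the image of every positive-codimension face, and that it avoids $\widehat{\ev}(C_\infty)=\infty$. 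Consequently the preimage $\widehat{\ev}^{-1}(\mathbf{p})$ meets only the interiors of top-dimensional faces $\mathcal{M}_{\Delta}(\mu)$ with $\mult(\mu,or_\mu)\ne 0$, and by Proposition \ref{prop: Structure of evaluation map} it meets each such open face in at most one point: exactly one if the unique curve of type $\mu$ through $\mathbf{p}$ happens to lie in $\mathcal{M}_{\Delta}(\mu)$ (contributing $N_{\Delta}(\mu,\mathbf{p})=1$), and none otherwise.

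Next I would evaluate the intersection number $I_{\ss^{2n-2}}(\widehat{\ev}_*([Z]),[\mathbf{p}])$ by the standard local formula: it is the sum, over all preimage points $C\in\widehat{\ev}^{-1}(\mathbf{p})$, of the local degree of $\widehat{\ev}$ at $C$ weighted by the coefficient $Z_\mu$ of the face $\widehat{\mathcal{M}}_{\Delta}(\mu)$ containing $C$. Since $Z$ is supported on $\widehat{\mathcal{M}}_{\Delta,n,n-1}^{reg}$ and carries the blackboard orientation on each face, the local degree at $C$ is precisely the sign of the determinant of $\ev\restrict{\mu}$ computed with respect to the blackboard-positive coordinates and the standard orientation of $(\RR^2)^{n-1}$. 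By Proposition \ref{prop: Structure of evaluation map} this determinant equals $(-1)^n\mult(\mu,or_\mu)$, and with $or_\mu$ the blackboard orientation $\mult(\mu,or_\mu)>0$ by definition; hence the local degree is exactly $(-1)^n$, independently of $\mu$. (Equivalently, this is just Corollary \ref{cor: orientation}.) Summing over the preimage points therefore yields $\deg_Z(\ev)=\sum_\mu Z_\mu\cdot N_{\Delta}(\mu,\mathbf{p})\cdot(-1)^n=(-1)^nN_{\Delta,Z}(\mathbf{p})$, which is the asserted identity after multiplying both sides by $(-1)^n$.

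Finally, the independence of $N_{\Delta,Z}(\mathbf{p})$ on $\mathbf{p}$ follows without further work: $[Z]\in H_{2n-2}(\widehat{\mathcal{M}}_{\Delta,n,n-1};\QQ)$ is a fixed homology class, its push-forward $\widehat{\ev}_*([Z])\in H_{2n-2}(\SS^{2n-2};\QQ)$ is a fixed class (well-defined by Lemma \ref{lem: ev_continuous}, which guarantees $\widehat{\ev}$ is continuous on the support of $Z$), and the intersection number with $[pt]$ is a homological invariant, so $\deg_Z(\ev)$ does not depend on the representative point $\mathbf{p}$ used to compute it; the equality $N_{\Delta,Z}(\mathbf{p})=(-1)^n\deg_Z(\ev)$ then transfers this invariance to the weighted count.

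I expect the main obstacle to be the bookkeeping around the point at infinity and the non-top-dimensional strata: one must be careful that a generic $\mathbf{p}$ genuinely avoids $\widehat{\ev}$ of all of $\cD$ and of $C_\infty$, and that no preimage point escapes to $C_\infty$ — this is exactly where Lemma \ref{lem: ev_continuous} and the definition of $\cD$ (via Proposition \ref{prop: Structure of evaluation map}, linearity of $\ev\restrict{\mu}$, and the measure-zero footnote) are doing the real work. Once continuity of $\widehat{\ev}$ on the support of $Z$ and the regularity of $\mathbf{p}$ are in place, the degree computation is a routine application of Corollary \ref{cor: orientation}.
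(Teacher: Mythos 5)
Your proposal is correct and is essentially an expanded version of the paper's own (very terse) argument: the paper derives the theorem directly from the invariance of the homological intersection number together with Corollary \ref{cor: orientation}, which is exactly the local-degree computation you spell out. Your additional care about the point at infinity, the set $\cD$, and the continuity of $\widehat{\ev}$ on the support of $Z$ (via Lemma \ref{lem: ev_continuous}) fills in precisely the bookkeeping the paper leaves implicit.
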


\begin{rem}
\label{rem: automorphism count}
Note that this is the $Z$-weighted number of $(n-1)$-marked rational curves equipped with
an ordering of unmarked legs, which induces the ordering of the $\Delta$-set via the assignment
$\xi_i=\xi(e_i)$.
To get a more common count of curves with unordered legs (and unordered $\Delta$-set)
we should normalize $N_{\Delta,Z}$ by considering $\frac{1}{|Aut(\Delta)|}N_{\Delta,Z}$.
Here, $Aut(\Delta)$ is the automorphism set of $\Delta$, i.e., $\pi\in S_n$ such that
$\xi_{\pi(i)}=\xi_i$ for all $i=1,\dots,n$.
\end{rem}

In many cases $\widehat{\mathcal{M}}_{\Delta,n,n-1}^{reg}=\widehat{\mathcal{M}}_{\Delta,n,n-1}$,
so the requirement that the cycle $Z$ is supported on $\widehat{\mathcal{M}}_{\Delta,n,n-1}^{reg}$
is trivial. In particular, this happens if the $\Delta$-set is sufficiently generic. Namely, we shall call the
set $\Delta$ {\em 3-independent}, if for any partition of the index set $1,\dots,n$ into three
non-empty subsets $I_i$, $i=1,2,3$ vectors $\eta_i=\sum_{k\in I_i}\xi_k$, $i=1,2,3$ are not all parallel.
\begin{lem}
\label{lem: 3-independent}
Let $\Delta$ be 3-independent.
Then $\widehat{\mathcal{M}}_{\Delta,n,n-1}^{reg}=\widehat{\mathcal{M}}_{\Delta,n,n-1}$.
\end{lem}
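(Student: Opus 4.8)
The plan is to show that under 3-independence every rigid $(n-1)$-marked curve of a top-dimensional combinatorial type $\mu$ is non-degenerate, i.e.\ $\mult(\mu, or_\mu)\ne 0$; this gives $\mathcal{M}_{\Delta,n,n-1}^{reg}=\mathcal{M}_{\Delta,n,n-1}$, and passing to one-point compactifications yields the claim for the hat-spaces. Recall that for $m=n-1$ a top-dimensional face corresponds to an $(n,n-1)$-tree $\G$, all of whose vertices (marked and unmarked) are trivalent, and that $\mult(\mu,or_\mu)=\prod_v\det(\xi_v(e_1(v)),\xi_v(e_2(v)))$ over the $n-2$ unmarked vertices. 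So it suffices to show that at no unmarked trivalent vertex $v$ can the two incoming slope vectors be parallel. Equivalently, by the balancing condition, none of the three slope vectors meeting at $v$ can be parallel to another; that is, each unmarked vertex is non-degenerate.

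First I would set up the combinatorics: removing an unmarked vertex $v$ (trivalent) from the tree $\G$ disconnects it into three subtrees $T_1,T_2,T_3$, and the slope vector $\xi_v(e_i)$ along the $i$-th edge at $v$ equals, up to sign, the sum of the $\Delta$-set vectors $\xi(e_k)$ over the unmarked legs $e_k$ contained in $T_i$ (the marked legs contribute $0$ to slopes, and bounded-edge slopes are forced by balancing — this is Corollary \ref{cor: Uniqueness of extension}: slopes on bounded edges depend only on $\Delta$ and $\mu$, not on the metric). I would verify this by induction along each subtree, peeling off trivalent vertices from the leaves inward and repeatedly applying \eqref{eq:balancing}: at a vertex where two of the three adjacent slopes are already known (as partial sums of leg vectors of $\Delta$), the third is their negative sum. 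Key point: each of $T_1,T_2,T_3$ contains at least one unmarked leg. Indeed, by rigidity every connected component of $\G\minus E^m$ contains exactly one unmarked leg, and since $v$ is unmarked it lies in one such component whose unique leg lies in exactly one of the $T_i$; but the other two $T_j$ must still each contain an unmarked leg, because the unmarked legs number $n\ge 3$ and the partition of them induced by the three edges at $v$ must be into three nonempty parts — if some $T_j$ had no unmarked leg, all its legs would be marked, and tracing the outer flow one sees this forces a marked-only component, contradicting that each component of $\G\minus E^m$ has exactly one (unmarked) leg. Once the three parts $I_1,I_2,I_3$ of the index set $\{1,\dots,n\}$ are nonempty, 3-independence says $\eta_i=\sum_{k\in I_i}\xi(e_k)$ for $i=1,2,3$ are not all parallel; since $\xi_v(e_i)=\pm\eta_i$ (and $\eta_1+\eta_2+\eta_3=0$), no two of the three slopes at $v$ are parallel, so $\det(\xi_v(e_1(v)),\xi_v(e_2(v)))\ne0$.

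Having this at every unmarked vertex of every top-dimensional $\mu$, the product $\mult(\mu,or_\mu)$ is nonzero, so $\mathcal{M}_{\Delta,n,n-1}^{reg}$ contains all top-dimensional faces and hence all of $\mathcal{M}_{\Delta,n,n-1}$; adding $C_\infty$ to both sides gives $\widehat{\mathcal{M}}_{\Delta,n,n-1}^{reg}=\widehat{\mathcal{M}}_{\Delta,n,n-1}$. The main obstacle I anticipate is the purely combinatorial step that each of the three subtrees at an unmarked vertex carries at least one unmarked leg — this is where the hypothesis $m<n$ (so $m=n-1$, forcing exactly $n$ unmarked legs distributed among $n-1$ marked "slots") and the precise form of the rigidity condition must be used carefully; the rest is a routine induction and the invocation of 3-independence.
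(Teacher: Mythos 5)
Your proof is correct and follows essentially the same route as the paper's: at any unmarked trivalent vertex $v$ the three slope vectors are the sums $\eta_i=\sum_{k\in I_i}\xi_k$ over the partition of the unmarked legs induced by the three components of $\G\setminus\{v\}$, so 3-independence rules out degeneracy of any top-dimensional type. The only difference is that you explicitly verify that each part $I_i$ is non-empty (a point the paper leaves implicit), which is a worthwhile addition but not a change of method.
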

\begin{proof}
Suppose that a curve $C$ in some top-dimensional cell $\overline{\mathcal{M}}_{\Delta,n,n-1}(\mu)$
is degenerate.
Then three slope vectors $\xi_v(e)$ at some trivalent unmarked vertex $v$ do not span $\RR^2$, i.e.
they are all parallel; it remains to notice that they can be written as  $\eta_i=\sum_{k\in I_i}\xi_k$,
$i=1,2,3$, where $I_i$ is the subset of legs in each of the connected component of $C\minus\{v\}$.
\end{proof}

\begin{ex}
\label{ex: rigid_count}
Consider curves of Example \ref{ex: rigid}.

For $m=n-1=1$ and a $\Delta$-set $\Delta=\{\xi,-\xi\}$ we have
$\widehat{\mathcal{M}}_{\Delta,2,1}^{reg}=\widehat{\mathcal{M}}_{\Delta,2,1}=\SS^2$,
$\widehat{\ev}:\SS^2\to\SS^2$ is the identity map and $\cD=\varnothing$.
For any $p\in\RR^2$, $N_{\Delta}(\mu,p)=1$ is the number of lines through $p$
in the direction $\xi$ and $N_{\Delta}(\mu,p)=\deg_Z(\ev)=1$.

For $m=n-1=2$ and a $\Delta$-set $\Delta=\{\xi_1,\xi_2,\xi_3\}$ with $\det(\xi_1,\xi_2)\ne 0$
we have $\widehat{\mathcal{M}}_{\Delta,3,2}^{reg}=\widehat{\mathcal{M}}_{\Delta,3,2}\cong
(\RR^2\times\RR^2)\cup\{\infty\}=\SS^4$, see
Example \ref{ex: tripod moduli}.
The set $\cD$ consists of $\mathbf{p}=(p_1,p_2)\in (\RR^2)^2$ such that $p_1-p_2$ is parallel to
$\xi_i$, $i=1,2,3$.
Pick the top-dimensional cycle $Z=\sum_{\mu_{ij}} \widehat{\mathcal{M}}_{\Delta}(\mu_{ij})$
(so $Z_\mu=1$ for all $\mu_{ij}$).  For any $\mathbf{p}\in(\RR^2)^2\minus\cD$,
$N_{\Delta,Z}(\mathbf{p})$ is then the (geometric) number of tripods with legs in directions
of $\Delta$ through $\mathbf{p}$.
The map $\widehat{\ev}:\widehat{\mathcal{M}}_{\Delta,3,2}\cong\SS^4\to\SS^4$ is a bijection
of degree $-1$, see Example \ref{ex: rigid}.
Thus $\deg_Z(\ev)=I_{\ss^{2n}}(\widehat{\ev}_*([Z]),[pt])=-1$ and for any
$\mathbf{p}\in(\RR^2)^2\minus\cD$ Theorem \ref{thm: count} implies that
$N_{\Delta,Z}(\mathbf{p})=(-1)^3\cdot(-1)=1$.
\end{ex}

\subsection{Counting caterpillar curves}
\label{subsec: caterpillar}
Let us illustrate the counting procedure based on Theorem \ref{thm: count} on an example
of caterpillar curves.

Consider a top-dimensional face $\overline{\mathcal{M}}_{\Delta}(\mu)$ of
$\widehat{\mathcal{M}}_{\Delta,n,n-1}$.
A trivalent tree $\G$ of a combinatorial type $\mu$ is an {\em $(s,t)$-caterpillar}, $1\le s<t\le n$,
if all unmarked vertices can be connected by a non-self-intersecting path of edges $L_{st}(\G)$
(the ``body'' of the caterpillar), starting at the leg  $e_s$ and ending at the leg $e_t$
(the ``head'' and the ``tail'' of the caterpillar).
To assure that any such face belongs to $\widehat{\mathcal{M}}_{\Delta,n,n-1}^{reg}$,
a weaker version of 3-independence will suffice:
we say that the $\Delta$-set $\Delta$ is $(s,t)$-independent, if for any partition of the index
set $1,\dots,n$ into three non-empty subsets $I_i$, $i=1,2,3$ with $s\in I_1$,  $t\in I_2$
and $|I_3|=1$ vectors $\eta_i=\sum_{k\in I_i}\xi_k$, $i=1,2,3$ are not all parallel.
Let us also define the sign of an $(s,t)$-caterpillar curve $C$ of type $\mu$ as follows.
We follow the polygonal path $h(L_{st}(\G))$ of edges in $h(\G)$ starting at $h(e_s)$
and ending at $h(e_t)$ and count unmarked edges approaching it from the right.
We set $\eps_{st}(C)=+1$ if this number is even, and $\eps_{st}(C)=-1$ if it is odd.
An example of signs $\eps_{st}$ for $H$, $X$ and $I$ plane curves is shown in Figure
\ref{fig: caterpillar}.

\begin{figure}[htb]
\includegraphics[width=5.0in]{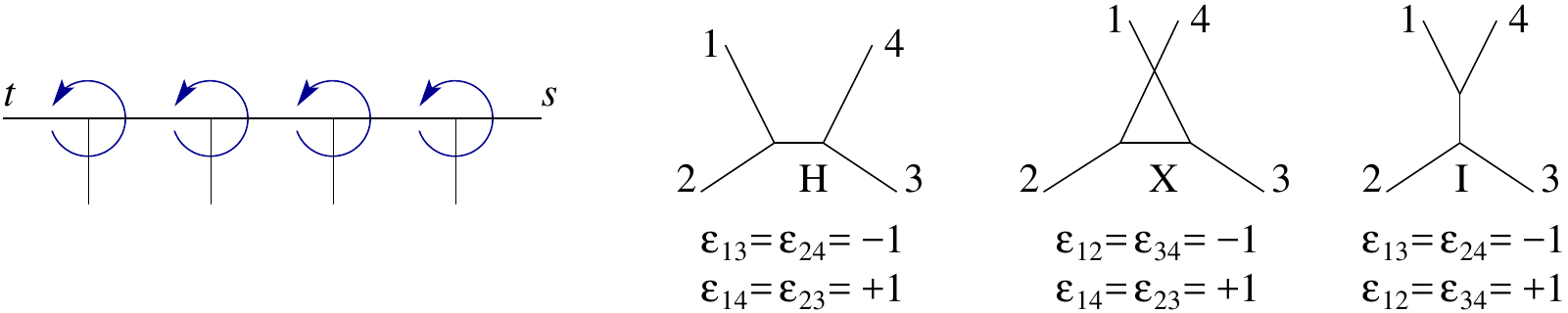}
\caption{Abstract and plane caterpillar curves.
\label{fig: caterpillar}}
\end{figure}

\begin{prop}

Let $\Delta$ be an $(s,t)$-independent $\Delta$-set for some $1\le s<t\le n$ and let
$\mathbf{p}\in(\RR^2)^{n-1}\minus\cD$ be a collection of $n-1$ points in $\RR^2$ in general position.
Then the algebraic number of rigid $(s,t)$-caterpillar curves in $\widehat{\mathcal{M}}_{\Delta,n,n-1}$
through $\mathbf{p}$ (i.e., with marked points at $\mathbf{p}$), counted with signs $\eps_{st}$, does
not depend on $\mathbf{p}$.
\end{prop}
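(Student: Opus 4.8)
The plan is to deduce the statement from Theorem \ref{thm: count}, applied to a suitable cycle whose weighted count recovers the signed caterpillar count. Since slope vectors on edges of a curve depend only on its combinatorial type $\mu$ (Corollary \ref{cor: Uniqueness of extension}), and since $\eps_{st}(C)$ is a locally constant integer-valued invariant, it takes a single value $\eps_{st}(\mu)$ on the connected face $\mathcal{M}_\Delta(\mu)$ of every $(s,t)$-caterpillar type $\mu$. I would then set
$$Z:=\sum_{\mu\ (s,t)\text{-caterpillar}}\eps_{st}(\mu)\cdot\widehat{\mathcal{M}}_\Delta(\mu),$$
a top-dimensional cellular chain on $\widehat{\mathcal{M}}_{\Delta,n,n-1}$ with each face carrying the blackboard orientation. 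If $Z$ is a cycle supported on $\widehat{\mathcal{M}}_{\Delta,n,n-1}^{reg}$, then since $N_\Delta(\mu,\mathbf{p})\in\{0,1\}$ equals $1$ exactly when the unique curve of type $\mu$ passes through $\mathbf{p}$, formula \eqref{eq: N_DeltaZ} shows that $N_{\Delta,Z}(\mathbf{p})$ is precisely the algebraic number of rigid $(s,t)$-caterpillar curves through $\mathbf{p}$ counted with signs $\eps_{st}$ (non-caterpillar types contribute nothing since their coefficients vanish), and Theorem \ref{thm: count} then yields its independence of $\mathbf{p}$ (together with the value $(-1)^n\deg_Z(\ev)$).

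Thus two points remain to be verified. The first is that every $(s,t)$-caterpillar face lies in $\widehat{\mathcal{M}}_{\Delta,n,n-1}^{reg}$; here I would argue exactly as in Lemma \ref{lem: 3-independent}. At a trivalent unmarked vertex $v$ of an $(s,t)$-caterpillar, all unmarked vertices lie on the body $L_{st}(\G)$, so two of the three branches of $\G\minus\{v\}$ carry the two ends $e_s$ and $e_t$ of the body and the third is a single pendant unmarked leg; the three slope vectors at $v$ are therefore $\eta_i=\sum_{k\in I_i}\xi_k$ for a partition $\{1,\dots,n\}=I_1\sqcup I_2\sqcup I_3$ with $s\in I_1$, $t\in I_2$ and $|I_3|=1$, and $(s,t)$-independence guarantees they are not all parallel, i.e. $\mult(\mu,or_\mu)\ne 0$.

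The second point, and what I expect to be the main obstacle, is the verification that $\partial Z=0$. The boundary $\partial Z$ is carried by the codimension-one faces, which are of type I or type II (Section \ref{subsec: gluings of faces}). A codimension-one face appearing in the boundary of some $(s,t)$-caterpillar face is obtained by contracting a body edge, creating a single four-valent vertex sitting on the body; I would check that among its trivalent resolutions exactly two are again $(s,t)$-caterpillars — those re-extending the body through the new edge — while the remaining resolution branches the body and hence fails the caterpillar condition (in the type I case) or is not rigid (in the type II case). Consequently the cycle condition at such a face reduces to the vanishing of $\pm\eps_{st}(\mu_1)\pm\eps_{st}(\mu_2)$ for the two surviving types $\mu_1,\mu_2$, with signs prescribed by the orientation comparisons recorded after Figures \ref{fig: boundary I} and \ref{fig: boundary II} (that the $X$- and $I$-resolutions induce one orientation on the common face and the $H$-resolution the opposite, and that the two type II neighbours induce opposite orientations). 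It then remains to establish the key combinatorial identity that an $IHX$- or $MP$-move across a body vertex flips the parity of the number of unmarked edges of $h(\G)$ approaching $h(L_{st}(\G))$ from the right in exactly the cases where it flips the induced boundary orientation — this is precisely what the definition of $\eps_{st}$ (illustrated in Figure \ref{fig: caterpillar}) is engineered to encode. Pushing this sign bookkeeping through the finitely many local configurations is the delicate but routine part; once $\partial Z=0$ is established, the proposition follows from Theorem \ref{thm: count}.
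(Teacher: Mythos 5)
Your proposal is correct and follows essentially the same route as the paper: you build the chain supported on $(s,t)$-caterpillar faces with coefficients $\eps_{st}$ in the blackboard orientation (the paper's $Z_{st}$, written there with coefficient $1$ in the caterpillar orientation), use $(s,t)$-independence as in Lemma \ref{lem: 3-independent} to place it in $\widehat{\mathcal{M}}_{\Delta,n,n-1}^{reg}$, verify the cycle condition via the (IHX)/(MP) boundary cancellations, and invoke Theorem \ref{thm: count}. The paper defers the final sign comparison to Figure \ref{fig: caterpillar} just as you defer it to a routine local check, so nothing essential is missing.
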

\begin{proof}
Define a $(2n-2)$-chain $Z_{st}=\sum_\mu Z_{st}(\mu)\cdot\widehat{\mathcal{M}}_{\Delta}(\mu)$
with $Z_{st}(\mu)=1$ if $\mu$ is a combinatorial class represented by an $(s,t)$-caterpillar
(with the orientation shown in Figure \ref{fig: caterpillar}) and $Z_{st}(\mu)=0$ otherwise.
Note that if $\Delta$ is $(s,t)$-independent, then $Z_{st}$ is supported on $\mathcal{M}_{\Delta,n,n-1}^{reg}$,
similarly to the proof of Lemma \ref{lem: 3-independent}.
The sign $\eps_{st}$ of a caterpillar curve simply indicates whether the orientation on Figure
\ref{fig: caterpillar} is the same as (or opposite to) the blackboard orientation.

The chain $Z_{st}$ is a cycle. Indeed, we should check that coefficients $Z_{st}(\mu)$ satisfy
(IHX) and (MP) equations. But $Z_{st}(\mu)$ do not depend on the position of marked points,
so satisfy the (MP) equation. Moreover, it is easy to see that either none or exactly two trees
appearing in (IHX) are $(s,t)$-caterpillars; a comparison of their signs $\eps_{st}$ in Figure
\ref{fig: caterpillar} shows that the (IHX) equation is also satisfied.
\end{proof}

\begin{ex}
\label{ex: caterpillar cycles}
Consider $n=5$.
Recall that $H_{8}(\widehat{\mathcal{M}}_{\Delta,5,4};\QQ)\cong J_5$ and the combinatorics
of $J_5$ is described by the Petersen graph, see Example \ref{ex: Jacobi spaces}.
Cycles $Z_{12}$,  $Z_{13}$, $Z_{14}$ and $Z_{15}$ are shown in Figure \ref{fig: caterpillar cycles}.
It is easy to check that cycles $Z_{st}$ generate $H_{8}(\widehat{\mathcal{M}}_{\Delta,5,4};\QQ)\cong J_5$.
\begin{figure}[htb]
\includegraphics[width=5.0in]{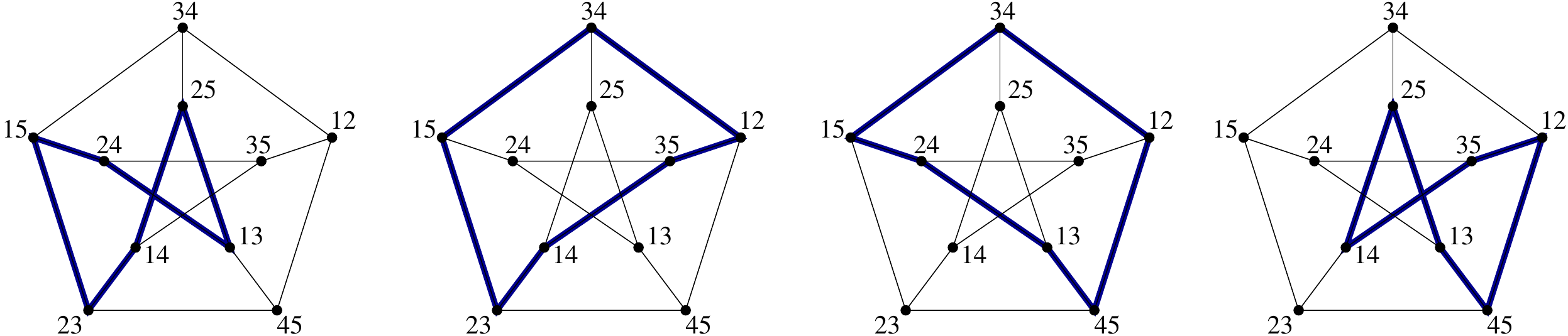}
\caption{Caterpillar cycles $Z_{12}$,  $Z_{13}$, $Z_{14}$ and $Z_{15}$ in  $J_5$.
\label{fig: caterpillar cycles}}
\end{figure}
 \end{ex}

\section{Lie algebras and enumeration of rational curves}
\label{sec: Lie and enumeration}

\subsection{Top-dimensional cycles and Lie algebras}
\label{subsec: cycles}
The  space $\widehat{\mathcal{M}}_{\Delta,n,n-1}$ contains many interesting cycles in
$H_{2n-2}(\widehat{\mathcal{M}}_{\Delta,n,n-1};\QQ)\cong J^{rig}_{n,n-1}\cong J_{n-1}$
(and in its complexification).
The space $J_{n-1}$ can be viewed as a diagrammatic version of a Lie algebra, so cycles are
closely related to the so-called weight systems arising from Lie algebras.
We shall skip this construction in the general case, specializing instead to a family $\FG_\hbar$
of Lie algebras suitable for the enumeration of rational \PTCs\ -- known as the quantum sine,
trigonometric, or quantum tori Lie algebras.
Initiated by Arnold \cite{A} in the classical case $\hbar=0$, they were extensively studied over
the years both by mathematicians and physicists, see e.g. \cite{FFZ}.
Lately they reappeared e.g. in \cite{KS}.

To define $\FG_\hbar$ we first define a quantum number $\displaystyle [x]_\hbar$ for
$\hbar \in\CC\minus 2\cdot\ZZ$ and $x\in\RR$ by
\begin{equation}
\label{eq: quantum numbers}
\displaystyle [x]_\hbar=\frac{e^{\pi i\hbar x/2}-e^{-\pi i\hbar x/2}}{e^{\pi i\hbar/2}-e^{-\pi i\hbar/2}}=
\frac{\sin(\pi\hbar x/2)}{\sin(\pi\hbar/2)}\,.
\end{equation}
For $\hbar=0$ we have $\lim\limits_{\hbar\to 0} [x]_\hbar=x$ for any $x\in\RR$, so we define\footnote{
For $\hbar=2k$, $k\ne 0$ the situation is different. We still have
$\lim\limits_{\hbar\to 2k} [x]_\hbar=x$ for any $x\in\ZZ$.
However, for non-integer $x$'s the function $\hbar\mapsto [x]_\hbar$ has poles at $\hbar=2k$, $k\ne 0$. }
$[x]_0=x$.

Now, let $A$ be the Abelian group generated by $\xi_i\in\Delta$ with the antisymmetric bilinear form
$A\times A\to\RR$ given by $\alpha\times \beta:=\det(\alpha,\beta)$.
Let $\FG_\hbar$ be a Lie algebra  generated by $\{L_\alpha\}_{\alpha\in A}$ and the Lie bracket
$\displaystyle [L_\alpha,L_\beta]=\sum_\gamma c_{\alpha \beta}^\gamma L_\gamma$ with
$c_{\alpha \beta}^\gamma=[\alpha\times \beta]_\hbar\cdot \delta_{\alpha+\beta}^{\,\gamma}$, i.e.,
\begin{equation}
\label{eqn: Lie}
[L_\alpha,L_\beta]=[\alpha\times\beta]_\hbar\ L_{\alpha+\beta}\,.
\end{equation}
Here, the identity $[-x]_\hbar=-[x]_\hbar$ implies antisymmetry
and the trigonometric formula $2\sin(a)\sin(b)=\cos(a-b)-\cos(a+b)$
implies the Jacobi identity for the bracket:
\begin{equation}
\label{eqn: AS_Lie}
c_{\beta\alpha}^\gamma=-c_{\alpha \beta}^\gamma\,,
\end{equation}
\begin{equation}
\label{eqn: IHX_Lie}
\sum_\nu\left(
c_{\alpha \beta}^\nu \cdot c_{\nu\gamma}^\tau-
c_{\alpha\gamma}^\nu \cdot c_{\nu\beta}^\tau-
c_{\beta\gamma}^\nu \cdot c_{\alpha\nu}^\tau\right)=0.
\end{equation}
It is also convenient to define a symmetric tensor $t^{\alpha\beta}=\delta^{\alpha+\beta,0}$
in order to raise indices of structural constants:
$c_\alpha^{\beta \gamma}:=\sum_\nu c_{\alpha\nu}^\beta t^{\nu\gamma}$.
Using the definition of $c_{\alpha \beta}^\gamma$ it is easy to check that
$c_\alpha^{\beta \gamma}$ are antisymmetric w.r.t upper indices:
\begin{equation}
\label{eqn: MP_Lie}
c_\alpha^{\gamma \beta}=-c_\alpha^{\beta \gamma}.
\end{equation}

An $A$-coloring (or ``$A$-state'') $s$ of an $(n,n-1)$-tree $\G$ of type $\mu$
(equipped with a Lie-orientation and an outer flow directions of edges) is an assignment
$s:E\to A$; an $A$-coloring is compatible with $\Delta$, if each unmarked leg $e_i$ of
$\G$ is colored by $\xi_i$ for $i=1,\dots,n$ and each marked leg is colored by $0$.
Fix an ordering $e_1(v)$, $e_2(v)$ of the incoming half-edges at each unmarked vertex
$v$, consistent with the Lie-orientation of $\G$.
The local weight $w(v;s)$ of an unmarked trivalent vertex $v$ with the incoming half-edges
$e_1(v)$, $e_2(v)$ colored by $\alpha$ and $\beta$ and the outgoing half-edge colored by
$\gamma$ is defined by $w(v;s)=c_{\alpha \beta}^\gamma$.
The local weight $w(v;s)$ of a marked trivalent vertex $v$ with outgoing unmarked edges colored
by $\alpha$ and $\beta$ is defined by $w(v;s)=t^{\alpha\beta}$.
The global weight $W(\G;s)$ of an $A$-coloring $s$ of $\G$ is defined by taking the product of local
weights over all vertices of $\G$:
$$ W(\G;s)=\prod_v w(v;s)\,.$$
In particular, for $m=n-1=1$ a 1-marked line, i.e., a rational 1-marked \PTC\ with the $\Delta$-set
$\Delta = \{\xi,-\xi\}$ has only one marked vertex of weight $t^{\xi(-\xi)}=1$, so its weight is $1$
independently on the vector $\xi$.
Note that due to our choice of the structural constants $c_{\alpha \beta}^\gamma$ and
$t^{\alpha\beta}$ there is only one $A$-coloring $s_\Delta$ of $\G$ compatible with $\Delta$
which may give a non-zero weight $W(\G;s)$, namely the one which defines a balanced global
current $\widetilde{\xi}$ on $\G$, see Corollary \ref{cor: Uniqueness of extension} and
Figure \ref{fig: colorings}.
\begin{figure}[htb]
\includegraphics[width=2.0in]{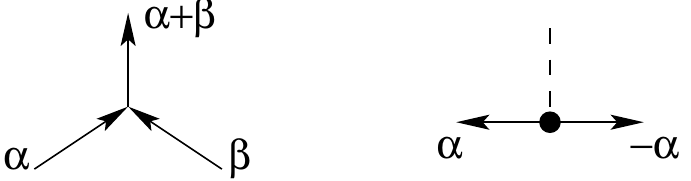}
\caption{Colorings around unmarked and marked vertices.
\label{fig: colorings}}
\end{figure}

Finally, the weight of $\G$ is defined as the formal sum $ Z_{\Delta,\hbar}(\G)= \sum\limits_s W(\G;s)$
over all $A$-colorings compatible with $\Delta$; thus $Z_{\Delta,\hbar}(\G)=\displaystyle W(\G;s_\Delta)$.
Since $Z_{\Delta,\hbar}(\G)$ depends only on the combinatorial type $\mu$ and the Lie-orientation
$or_\mu$ of $\G$, we shall also denote it by $Z_{\Delta,\hbar}(\mu, or_\mu)$.
Note that $Z_{\Delta,\hbar}(\mu, -or_\mu)=-Z_{\Delta,\hbar}(\mu, or_\mu)$.
Also, since the structural constants $c_{\alpha \beta}^\gamma$ are defined via the cross product of vectors,
the weight $Z_{\Delta,\hbar}(\mu, or_\mu)=0$ for any degenerate $\mu$, i.e., for $\mult(\mu, or_\mu)=0$.

Extending $Z_{\Delta,\hbar}$ by linearity to the $\CC$-linear function on the $\QQ$-vector space
spanned by $(n,n-1)$-trees we get

\begin{prop}
\label{prop: Z_hbar}
The function $\displaystyle Z_{\Delta,\hbar}$ factors through the {\rm (AS)}, {\rm (MP)} and
{\rm (IHX)} relations, thus descends to a linear function
$\displaystyle Z_{\Delta,\hbar}:J^{rig}_{n,n-1}\to\CC$.
\end{prop}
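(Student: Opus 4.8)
The plan is to verify directly that $Z_{\Delta,\hbar}$, regarded as a $\CC$-linear functional on the $\QQ$-vector space spanned by $(n,n-1)$-trees (before imposing relations), annihilates each of the relators \eqref{eq: AS}, \eqref{eq: MP} and \eqref{eq: IHX}, so that it descends to $J^{rig}_{n,n-1}$. The uniform mechanism is \emph{localization}: in each relation the trees involved coincide --- together with their Lie-orientations and outer flows --- outside the indicated fragment, and the global weight is the product $W(\G;s_\Delta)=\prod_v w(v;s_\Delta)$ over vertices. By Corollary \ref{cor: Uniqueness of extension} the compatible coloring $s_\Delta$ is uniquely determined and agrees on every edge leaving the fragment; hence the local weights at all vertices outside the fragment are common factors, and the asserted tree identity reduces to a numerical identity among the local weights attached to the fragment, with the colours on the fragment's outer edges fixed and the colours on its internal edge(s) forced by balancing. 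Thus each diagrammatic relation becomes an identity among the structural constants $c_{\alpha\beta}^\gamma$ and the tensor $t^{\alpha\beta}$.

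For \eqref{eq: AS}: the two trees differ by reversing the cyclic order at a single unmarked vertex, which by definition of $w$ exchanges the two incoming half-edges there; the identity needed is $c_{\beta\alpha}^\gamma=-c_{\alpha\beta}^\gamma$, i.e.\ \eqref{eqn: AS_Lie}, equivalently $[-x]_\hbar=-[x]_\hbar$. For \eqref{eq: MP}: moving a marked leg across an unmarked vertex reverses the outer flow along one bounded edge, turning a lower index of a structural constant into a raised one contracted against $t$; reading the incident local weights as in Figure \ref{fig: colorings} one sees that the two sides differ exactly by $c_\alpha^{\gamma\beta}$ versus $c_\alpha^{\beta\gamma}$, which agree up to sign by \eqref{eqn: MP_Lie}, and this sign is absorbed by the accompanying change of Lie-orientation recorded in Figure \ref{fig: boundary II}. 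For \eqref{eq: IHX}: the trees $H$, $X$, $I$ share the colours $\alpha,\beta,\gamma$ on the fragment's outer edges (the fourth being forced by global balancing), each contributes the product of the two local weights at its pair of unmarked vertices with the internal-edge colour fixed by balancing, and collecting the signs from the comparison of Lie-orientations with orderings of half-edges in Figure \ref{fig: boundary I} turns the relation into the Jacobi identity \eqref{eqn: IHX_Lie} (whose sum over $\nu$ collapses to a single term because of the Kronecker delta in $c_{\alpha\beta}^\gamma$), which is the trigonometric identity $2\sin a\sin b=\cos(a-b)-\cos(a+b)$ applied to the quantum numbers \eqref{eq: quantum numbers}.

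The three algebraic identities are routine and are already recorded in \eqref{eqn: AS_Lie}--\eqref{eqn: MP_Lie}. The only substantive step --- and the one I expect to be the main obstacle --- is the sign bookkeeping: matching the orientation data on the diagrammatic side (the outer flow directions, the cyclic orders at unmarked vertices, and the rule $\xi(-e)=-\xi(e)$ which flips a colour under reversal of an edge) with the placement of upper and lower indices on the structural constants, so that the three tree relations line up with \eqref{eqn: AS_Lie}, \eqref{eqn: IHX_Lie}, \eqref{eqn: MP_Lie} on the nose rather than up to an undetermined sign. This is precisely the comparison already performed in Section \ref{sec: homology} (Figures \ref{fig: boundary I} and \ref{fig: boundary II}), which I would reuse. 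Finally, since $c_{\alpha\beta}^\gamma$ vanishes whenever $\alpha$ and $\beta$ are parallel, $Z_{\Delta,\hbar}$ automatically vanishes on every degenerate generator, so no further compatibility is required for the quotient to make sense.
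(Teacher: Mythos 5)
Your proposal is correct and follows essentially the same route as the paper: the weight is a product of local vertex weights, so each diagrammatic relation localizes to an identity among the structural constants, with (AS), (IHX) and (MP) reducing respectively to \eqref{eqn: AS_Lie}, \eqref{eqn: IHX_Lie} and \eqref{eqn: MP_Lie} after the same orientation/sign comparison the paper records in its figures. Your additional observations (the collapse of the $\nu$-sum via the Kronecker delta, and the vanishing on degenerate types) are consistent with, and in the latter case explicitly stated elsewhere in, the paper.
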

\begin{proof}
The weights $Z_{\Delta,\hbar}$ of graphs satisfy the (AS) relation due to \eqref{eqn: AS_Lie}.

The weights of shown fragments of the colored H, X, I graphs of Figure \ref{fig: IHX_MP}a
below are
$\sum_\nu c_{\alpha \beta}^\nu \cdot c_{\nu\gamma}^\tau$,
$\sum_\nu c_{\gamma\alpha}^\nu \cdot c_{\beta\nu}^\tau$ and
$\sum_\nu c_{\beta\gamma}^\nu \cdot c_{\alpha\nu}^\tau$
respectively, thus  satisfy the (IHX) relation due to \eqref{eqn: IHX_Lie}.

The weights of shown fragments of the colored marked H, X graphs of Figure \ref{fig: IHX_MP}b
below are
$\sum_\nu c_{\alpha\nu}^\gamma t^{\nu\beta}=c_\alpha^{\gamma\beta}$ and
$\sum_\nu c_{\nu\alpha}^\beta t^{\gamma\nu}=-c_\alpha^{\beta\gamma}$
respectively, thus satisfy the (MP) relation due to \eqref{eqn: MP_Lie}.
\end{proof}

\begin{figure}[htb]
\includegraphics[width=5.0in]{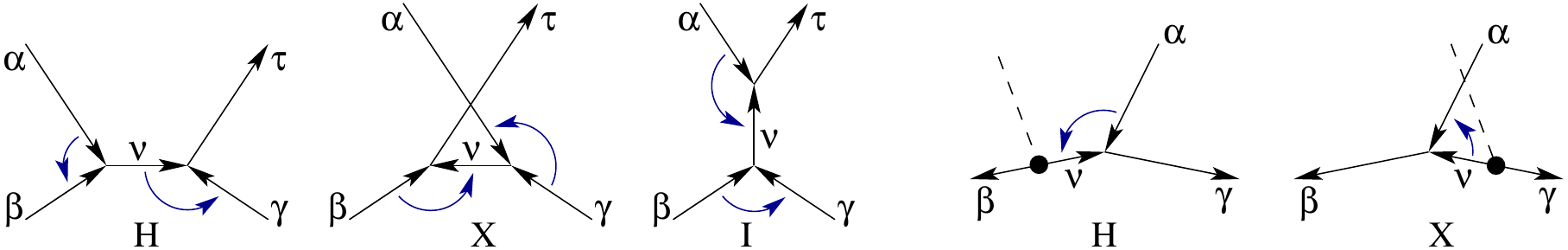}
\\ \hspace{0.55in} a\hspace{2.45in}b
\caption{Colored I,H,X and marked H,X graphs.
\label{fig: IHX_MP}}
\end{figure}

\begin{rem}
In view of the forgetful isomorphism $f:J^{rig}_{n,n-1}\to J_{n-1}$ the value
$Z_{\Delta,\hbar}(\mu, or_\mu)$ may be calculated as follows: drop all markings of an
$(n,n-1)$-tree $\G$ of type $\mu$ and direct all edges of the resulting unmarked Lie-oriented
tree $f(\G)$ towards the last $n$-th leg (the ``root''), see Figure \ref{fig: forgetful map}a.
Color incoming legs $e_i$, $i=1,\dots,n-1$ with vectors $-\xi_i$ (and the outgoing leg $e_n$
with $\xi_n=-\sum_{i=1}^{n-1} \xi_i$), extending it uniquely to other edges using the balancing
condition, see Figure \ref{fig: colorings}a.
The global weight of $f(\G)$ is again defined as the product of local weights
$c_{\alpha\beta}^\gamma$ over all trivalent vertices of $f(\G)$; note that since all vertices of
$f(\G)$ are unmarked, we don't use the tensor $t^{\alpha\beta}$.
By construction this weight equals to the weight $\displaystyle W(\G_*;s_\Delta)$
of the canonical representative $\G_*$ of the equivalence class of $\G$ in $J^{rig}_{n,n-1}$,
see Figure \ref{fig: forgetful map}b and Lemma \ref{lem: forgetful map} (and thus equals to
the weight $Z_{\Delta,\hbar}(\G)=\displaystyle W(\G;s_\Delta)$ of $\G$).
\end{rem}

Let $\cZ_\hbar$ be a $\CC$-valued $(2n-2)$-chain
$\cZ_\hbar=\sum_\mu Z_{\Delta,\hbar}(\mu, or_\mu)\cdot\widehat{\mathcal{M}}_{\Delta}(\mu)$
in $\widehat{\mathcal{M}}_{\Delta,n,n-1}$.
Using the identification of $J^{rig}_{n,n-1}$ with $H_{2n-2}(\widehat{\mathcal{M}}_{\Delta,n,n-1};\QQ)$,
see Theorem
\ref{thm: Jacobi homology}, we obtain
\begin{cor}
The chain $\cZ_\hbar$ is a cycle with
$[\cZ_\hbar]\in H_{2n-2}(\widehat{\mathcal{M}}_{\Delta,n,n-1};\QQ)\otimes\CC$.
The cycle $\cZ_\hbar$ is supported on $\widehat{\mathcal{M}}_{\Delta,n,n-1}^{reg}$
(i.e., on faces, corresponding to non-degenerate types $\mu$).
If $\widehat{\mathcal{M}}_{\Delta,n,n-1}^{reg}$ is equipped with the blackboard orientation,
the cycle  $\cZ_\hbar$ is positive, i.e. $Z_{\Delta,\hbar}(\mu, or_\mu)>0$ for any $\mu$ in
$\cZ_\hbar$.
\end{cor}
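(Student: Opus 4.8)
The plan is to verify the three defining properties of $\cZ_\hbar$ in turn, drawing on the results already established. First, to see that $\cZ_\hbar$ is a cycle: by construction the coefficient of $\widehat{\mathcal{M}}_{\Delta}(\mu)$ in $\cZ_\hbar$ is $Z_{\Delta,\hbar}(\mu, or_\mu)$, which by Proposition \ref{prop: Z_hbar} is a well-defined linear function on $J^{rig}_{n,n-1}$, i.e. it respects the (AS), (MP) and (IHX) relations. But these are exactly the relations that encode the gluing combinatorics of top-dimensional faces of $\widehat{\mathcal{M}}_{\Delta,n,n-1}$ along codimension-one faces (Section \ref{sec: homology}, Theorem \ref{thm: Jacobi homology}): a chain $\sum_\mu Z_\mu\cdot\widehat{\mathcal{M}}_{\Delta}(\mu)$ is a cycle precisely when the coefficients $Z_\mu$ satisfy (AS), (IHX) and (MP). Hence $\partial\cZ_\hbar = 0$, and under the isomorphism of Theorem \ref{thm: Jacobi homology} its class $[\cZ_\hbar]$ lies in $H_{2n-2}(\widehat{\mathcal{M}}_{\Delta,n,n-1};\QQ)\otimes\CC$.

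Second, to see that $\cZ_\hbar$ is supported on $\widehat{\mathcal{M}}_{\Delta,n,n-1}^{reg}$: I would invoke the observation made just before the corollary, namely that the structural constants $c_{\alpha\beta}^\gamma = [\alpha\times\beta]_\hbar\cdot\delta_{\alpha+\beta}^{\gamma}$ are built from the determinant $\alpha\times\beta = \det(\alpha,\beta)$. If $\mu$ is a degenerate combinatorial type, then by definition $\mult(\mu, or_\mu)=0$, which means there is an unmarked trivalent vertex $v$ whose incident slope vectors do not span $\RR^2$, i.e. the two incoming slopes $\alpha,\beta$ at $v$ (in the unique $\Delta$-compatible coloring $s_\Delta$) are parallel. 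Then $\det(\alpha,\beta)=0$, so $[\alpha\times\beta]_\hbar = [0]_\hbar = 0$, hence the local weight $w(v;s_\Delta)=c_{\alpha\beta}^\gamma=0$ and therefore $Z_{\Delta,\hbar}(\mu, or_\mu)=W(\G;s_\Delta)=0$. Thus $\cZ_\hbar$ has zero coefficient on every degenerate face, i.e. it is supported on $\widehat{\mathcal{M}}_{\Delta,n,n-1}^{reg}$.

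Third, for positivity with respect to the blackboard orientation: recall that the blackboard orientation $or_\mu$ on a non-degenerate face is the one for which $\mult(\mu, or_\mu)>0$, and by \eqref{eq: mult} $\mult(\mu, or_\mu)=\prod_v \det(\xi_v(e_1(v)),\xi_v(e_2(v)))$ over the $n-2$ unmarked vertices, where the ordering $e_1(v)\wedge e_2(v)$ is the one induced by $or_\mu$. On the other hand, with this same consistent ordering, $Z_{\Delta,\hbar}(\mu, or_\mu)=W(\G;s_\Delta)=\prod_v c_{\alpha_v\beta_v}^{\gamma_v}=\prod_v [\det(\alpha_v,\beta_v)]_\hbar$, where $\alpha_v=\xi_v(e_1(v))$, $\beta_v=\xi_v(e_2(v))$ (the marked-vertex factors $t^{\alpha\beta}$ contribute $1$). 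So $\mult(\mu, or_\mu)>0$ says each factor $\det(\alpha_v,\beta_v)$ has the appropriate sign so that the product is positive; I would then argue that the quantum number $[x]_\hbar$ preserves this sign information — in particular $\sign[x]_\hbar$ depends on $x$ and $\hbar$ in a controlled way — to conclude $Z_{\Delta,\hbar}(\mu, or_\mu)>0$.

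Here lies the main obstacle: the claim $Z_{\Delta,\hbar}(\mu, or_\mu)>0$ as literally stated requires $[x]_\hbar>0$ whenever $x>0$, which is true only for $\hbar$ in a suitable range (e.g. real $\hbar$ with $|\hbar|$ small, or more precisely $\hbar$ such that $\sin(\pi\hbar x/2)$ and $\sin(\pi\hbar/2)$ have matching signs for all the relevant values of $x$), since in general $[x]_\hbar$ is a sine ratio that oscillates. So I would either restrict the statement to such $\hbar$ (for instance $\hbar\in(-2/D,2/D)$ where $D$ bounds the relevant determinants), or — more in the spirit of the enumerative application, where what ultimately matters is $\deg_{\cZ_\hbar}(\ev)=\sum_\mu Z_{\Delta,\hbar}(\mu,or_\mu)\cdot N_\Delta(\mu,\mathbf{p})$ via Theorem \ref{thm: count} — reinterpret "positive" as meaning that each nonzero coefficient is a genuine (real, ultimately positive for small $\hbar$) quantum integer $[\det(\alpha_v,\beta_v)]_\hbar$, so that no cancellation of the "wrong sign" occurs and the $\hbar\to 0$ limit recovers the honestly positive multiplicity count. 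In the write-up I would make this dependence on $\hbar$ explicit rather than sweeping it under the rug.
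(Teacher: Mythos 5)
Your treatment of the first two claims coincides with the paper's: the corollary is stated there as an immediate consequence of Proposition \ref{prop: Z_hbar} (the weights factor through (AS), (IHX), (MP), which by Theorem \ref{thm: Jacobi homology} is exactly the cycle condition on top-dimensional chains) together with the remark immediately preceding it that $Z_{\Delta,\hbar}(\mu,or_\mu)=0$ whenever $\mult(\mu,or_\mu)=0$, since some local weight $[\det(\alpha,\beta)]_\hbar$ vanishes. So for those parts you are reproducing the intended argument, just more explicitly than the paper does.

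Your objection to the third claim is well taken, and it is a point about the statement rather than a gap in your argument: the paper offers no proof of positivity, and as you note, $Z_{\Delta,\hbar}(\mu,or_\mu)=\prod_v[\det(\xi_v(e_1(v)),\xi_v(e_2(v)))]_\hbar$ with all determinants positive under the blackboard orientation, so positivity of the product reduces to positivity of $[x]_\hbar=\sin(\pi\hbar x/2)/\sin(\pi\hbar/2)$ for the relevant $x>0$. This holds at $\hbar=0$ (where $[x]_0=x$) and for real $\hbar$ small enough relative to the determinants involved, but fails for general real $\hbar$ (e.g.\ $\hbar=1$ gives $\sin(\pi x/2)$, which changes sign) and is not even a real number for generic complex $\hbar$. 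In the classical tropical case the natural fix is to read ``positive'' as ``Laurent polynomial in $y^{1/2}=e^{\pi i\hbar/2}$ with positive coefficients,'' which is how Block--G\"ottsche weights are positive; in the genuinely pseudotropical case one must restrict $\hbar$ as you propose. Your suggestion to make the $\hbar$-dependence explicit is the correct repair, and note that positivity is not actually used in Theorem \ref{thm: count}, which only requires that $\cZ_\hbar$ be a cycle supported on the regular part -- so the substance of the corollary survives without it.
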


In view of Theorem \ref{thm: count} we therefore conclude
\begin{cor}
Let $\mathbf{p}\in(\RR^2)^{n}\minus\cD$ be a collection of $n-1$ points in $\RR^2$ in
general position.
The $\cZ_\hbar$-weighted number $N_{\Delta,\cZ_\hbar}=N_{\Delta,\cZ_\hbar}(\mathbf{p})$
of curves through $\mathbf{p}$ does not depend on the collection $\mathbf{p}$ and equals to
$(-1)^n\deg_{\cZ_\hbar}(\ev)$.
\end{cor}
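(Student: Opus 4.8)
The plan is to obtain the statement as an immediate application of Theorem~\ref{thm: count} to the cycle $Z=\cZ_\hbar$. The three hypotheses needed there have already been established: $\cZ_\hbar$ is a cycle in $Z_{2n-2}(\widehat{\mathcal{M}}_{\Delta,n,n-1};\QQ)\otimes\CC$, it is supported on $\widehat{\mathcal{M}}_{\Delta,n,n-1}^{reg}$, and when $\widehat{\mathcal{M}}_{\Delta,n,n-1}^{reg}$ carries the blackboard orientation all of its coefficients $Z_{\Delta,\hbar}(\mu,or_\mu)$ are positive (this is precisely the content of the corollary preceding the present statement, which in turn rests on Proposition~\ref{prop: Z_hbar}). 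The one point worth a word is that $\cZ_\hbar$ has complex rather than rational coefficients; this is harmless, since every ingredient of Theorem~\ref{thm: count} --- the pushforward $\widehat{\ev}_*$, the intersection pairing on $\SS^{2n-2}$, the quantity $\deg_Z(\ev)$, and the defining sum \eqref{eq: N_DeltaZ} --- is $\QQ$-linear in the tuple of face coefficients $(Z_\mu)_\mu$ and hence extends $\CC$-linearly. In particular $\deg_{\cZ_\hbar}(\ev)$ is the corresponding $\CC$-linear combination $\sum_\mu Z_{\Delta,\hbar}(\mu,or_\mu)\cdot\deg_{\widehat{\mathcal{M}}_\Delta(\mu)}(\ev)$ of the integer local degrees of $\widehat{\ev}$ on the top-dimensional faces.

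Concretely, I would first record that, by definition of $\cZ_\hbar$ as $\sum_\mu Z_{\Delta,\hbar}(\mu,or_\mu)\cdot\widehat{\mathcal{M}}_{\Delta}(\mu)$, the coefficient $Z_\mu$ entering \eqref{eq: N_DeltaZ} is exactly $Z_{\Delta,\hbar}(\mu,or_\mu)$, so that $N_{\Delta,\cZ_\hbar}(\mathbf{p})=\sum_\mu Z_{\Delta,\hbar}(\mu,or_\mu)\cdot N_{\Delta}(\mu,\mathbf{p})$ is literally the expression \eqref{eq: N_DeltaZ} specialized to $Z=\cZ_\hbar$. Since $\mathbf{p}\notin\cD$ and $\cZ_\hbar$ meets the hypotheses of Theorem~\ref{thm: count}, that theorem yields at once that $N_{\Delta,\cZ_\hbar}(\mathbf{p})$ is independent of the collection $\mathbf{p}$ and equals $(-1)^n\deg_{\cZ_\hbar}(\ev)$. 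This completes the argument.

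The main (and essentially only) obstacle is bookkeeping rather than substance: one must make sure that the polyhedral-chain data of $\cZ_\hbar$ --- its coefficients, its support, and the blackboard-orientation convention --- line up exactly with the data $Z=\sum_\mu Z_\mu\cdot\widehat{\mathcal{M}}_{\Delta}(\mu)$ required in Theorem~\ref{thm: count}, and that the passage from $\QQ$- to $\CC$-coefficients is legitimate. Both checks are immediate from the definition of $\cZ_\hbar$ and the corollary preceding the statement. The genuinely nontrivial inputs --- that $Z_{\Delta,\hbar}$ descends through the (AS), (IHX) and (MP) relations so that $\cZ_\hbar$ is a bona fide cycle, and that the homological intersection number on $\SS^{2n-2}$ computes the weighted curve count --- were already carried out in Proposition~\ref{prop: Z_hbar} and Theorem~\ref{thm: count}, so nothing further is needed here.
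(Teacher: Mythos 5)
Your proposal is correct and follows exactly the paper's route: the paper derives this corollary as an immediate application of Theorem~\ref{thm: count} to the cycle $Z=\cZ_\hbar$, whose required properties (being a cycle supported on $\widehat{\mathcal{M}}_{\Delta,n,n-1}^{reg}$ with positive coefficients in the blackboard orientation) were established in the preceding corollary. Your extra remark on extending the coefficients from $\QQ$ to $\CC$ is a harmless and valid piece of bookkeeping that the paper leaves implicit.
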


\begin{rem}
Some authors (see \cite{FFZ}) consider the Lie algebra structure on $\FG_\hbar$  given by
$ [L_\alpha,L_\beta]=r\sin\left(\pi\hbar\frac{\alpha\times\beta}2\right)\cdot L_{\alpha+\beta}$
for arbitrary complex constant $r$. Our choice $r=\sin(\pi\hbar/2)^{-1}$ gives the usual $q$-numbers
so $[L_\alpha,L_\beta]=(\alpha\times\beta)\cdot L_{\alpha+\beta}$ in the limit $\hbar\to 0$.
Other values of $r$ can be obtained by a suitable rescaling $L_\alpha\to r L_\alpha$ of the basis
and result in the same cycle $\cZ_\hbar$ up to multiplication by a constant.
\end{rem}

In the following sections we shall study $N_{\Delta,\cZ_\hbar}$ for $\hbar=0$.

\subsection{Quasi-classical limit of weights}
\label{subsec: quasi-classics}
In the quasi-classical limit $\hbar\to 0$ the only non-trivial structural constants of the Lie algebra
$\FG_0$ are $c_{\alpha\beta}^{\alpha+\beta}=[\alpha\times\beta]_0=\alpha\times\beta$ and
$c_{\alpha+\beta}^{\alpha\beta}=[\beta\times\alpha]_0=\beta\times\alpha$.

The Jacobi identity \eqref{eqn: IHX_Lie} becomes in this case the quadratic Pl\"ucker relation in
the real Grassmanian $Gr(2,4)$.
Indeed, note that for $\tau=\alpha+\beta+\gamma$ vectors $\alpha$, $\beta$, $\gamma$ and
$-\tau$ can be arranged into a $2\times 4$ real matrix such that the sum of entries in each
row is zero. Denote by $p_{ij}$ its $(i,j)$-th minor; since the sum of entries in each row is zero,
we have
$p_{13}+p_{23}-p_{34}=p_{12}+p_{32}-p_{24}=p_{12}+p_{13}+p_{14}=0$.
Then \eqref{eqn: IHX_Lie} becomes
\begin{equation}
\label{eqn: Plucker}
p_{12}(p_{13}+p_{23})- p_{13}(p_{12}+p_{32})- p_{23}(p_{12}+p_{13})=
p_{12}p_{34}- p_{13}p_{24}+ p_{23}p_{14}=0\,.
\end{equation}

It remains to notice that the matrix above determines a 2-plane in $\RR^4$ lying in the subspace
orthogonal to the vector $(1,1,1,1)$. Denote by $Gr^0(2,4)$ the subvariety of $Gr(2,4)$, consisting
of such 2-planes. The equation \eqref{eqn: Plucker} is the standard Pl\"ucker relation on $Gr^0(2,4)$.

Structural constants of the Lie algebra $\FG_\hbar$ provide an $\hbar$-deformation
$p_{ij}\mapsto[p_{ij}]_\hbar$ of the Pl\"ucker coordinates $p_{ij}$ on $Gr^0(2,4)$.
A natural question is whether there exist other continuous deformations of the Pl\"ucker
coordinates on $Gr^0(2,4)$ of the form $p_{ij}\mapsto \varphi(p_{ij})$.
The next proposition shows that up to linear reparametrizations any such deformation
coinsides with $p_{ij}\mapsto[p_{ij}]_\hbar$:

\begin{prop}
Let $\varphi:\RR\to\CC$ be a non-linear continuous function.
Then it preserves the Pl\"ucker relation on the subvariety $Gr^0(2,4)$ if and only if it has the form
$\varphi(x)=r\sin(kx)$ with $r,k\in\CC$.
\end{prop}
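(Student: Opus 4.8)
The plan is to turn ``preserving the Pl\"ucker relation'' into a single functional equation for $\varphi$, promote a continuous solution to a smooth one, and then solve the resulting linear ODE. An elementary construction realizes every triple $(a,b,c)\in\RR^{2}$, wait — every $(a,b,c)\in\RR^{3}$, as $(p_{12},p_{13},p_{23})$ of a $2\times4$ real matrix with zero row sums (allowing a column to vanish in degenerate cases), and then the remaining minors are forced to be $p_{34}=b+c$, $p_{24}=a-c$, $p_{14}=-a-b$. Since \eqref{eqn: Plucker} holds identically on $Gr^{0}(2,4)$, the condition that $\varphi$ preserve it is exactly the identity
\begin{equation}
\varphi(a)\,\varphi(b+c)-\varphi(b)\,\varphi(a-c)+\varphi(c)\,\varphi(-a-b)=0,\qquad a,b,c\in\RR.
\tag{E}
\end{equation}
We may assume $\varphi\not\equiv0$, the zero function being linear. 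Taking $a=b=c=0$ in (E) gives $\varphi(0)=0$, and taking $b=0$ gives $\varphi(c)\bigl(\varphi(a)+\varphi(-a)\bigr)\equiv0$, so $\varphi$ is odd; then (E) becomes
\begin{equation}
\varphi(a)\,\varphi(b+c)=\varphi(b)\,\varphi(a-c)+\varphi(c)\,\varphi(a+b).
\tag{E$'$}
\end{equation}

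Next I would establish smoothness. One first checks, from (E$'$) alone, that $\varphi$ vanishes on no interval: if $\varphi\equiv0$ near some point then, reducing via (E$'$) to a zero-interval around the origin, one extracts a relation $\varphi(x+a)=\rho(a)\varphi(x)$ ($x\in\RR$) valid for all small $a$, with $\rho$ multiplicative and continuous, hence $\rho(a)=e^{\mu a}$ and $\varphi(x)=e^{\mu x}\varphi(0)=0$, a contradiction. Thus the zero set $Z$ of $\varphi$ is nowhere dense, and one may choose $c_{0},b_{1}\notin Z$ with $c_{0}+b_{1}\notin Z$. Convolving (E$'$) in the variable $c$ against a smooth bump supported near $c_{0}$, and then the resulting identity in the variable $b$ against a smooth bump supported near $b_{1}$, expresses $\varphi$, up to a nonzero scalar, as a finite linear combination of convolutions of $\varphi$ with smooth compactly supported functions; hence $\varphi\in C^{\infty}(\RR)$.

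Now I would differentiate. Differentiating (E$'$) in $c$ at $c=0$ gives the addition law $\varphi'(0)\,\varphi(a+b)=\varphi(a)\varphi'(b)+\varphi'(a)\varphi(b)$. Setting $a=b$ here gives $(\varphi^{2})'(x)=\varphi'(0)\,\varphi(2x)$, so $\varphi'(0)=0$ would force $\varphi^{2}$ constant, i.e.\ $\varphi\equiv0$; hence $\omega:=\varphi'(0)\ne0$. Differentiating the addition law once, then twice, in $b$ at $0$ gives first $\varphi''(0)=0$ and then $\varphi''(x)=\lambda\,\varphi(x)$ with $\lambda:=\varphi'''(0)/\omega\in\CC$. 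Solving this ODE with $\varphi(0)=0$, $\varphi'(0)=\omega\ne0$ yields, when $\lambda\ne0$, $\varphi(x)=\tfrac{\omega}{\mu}\sinh(\mu x)$ with $\mu^{2}=\lambda$; writing $k=i\mu$, $r=\omega/k$ and using $\sinh(\mu x)=\tfrac{1}{i}\sin(i\mu x)$, this is precisely $\varphi(x)=r\sin(kx)$ with $r,k\in\CC$ (the ordinary sine when $\lambda<0$, the hyperbolic sine when $\lambda>0$), while $\lambda=0$ gives the excluded linear case $\varphi(x)=\omega x$. For the converse, $\varphi(x)=r\sin(kx)$ does satisfy (E): the relation is homogeneous of degree $2$ in $\varphi$, so $r$ is irrelevant, and $x\mapsto\sin(kx)$ (equivalently $x\mapsto[x]_{\hbar}$ of \eqref{eq: quantum numbers} with $k=\pi\hbar/2$) preserves the deformed Pl\"ucker relation by exactly the computation behind \eqref{eqn: IHX_Lie}--\eqref{eqn: Plucker}, which uses only $2\sin(A)\sin(B)=\cos(A-B)-\cos(A+B)$, valid for all complex $A,B$.

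The main obstacle is the middle step: proving that a merely continuous solution of (E$'$) is automatically $C^{\infty}$, and, tied to this, controlling the zero set of $\varphi$ so that the mollification argument and the various ``$\varphi\ne0$'' manipulations are legitimate on all of $\RR$. Once smoothness is available, the reduction of the sine-type addition law to $\varphi''=\lambda\varphi$ and its solution are routine.
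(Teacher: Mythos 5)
Your reduction of ``preserving the Pl\"ucker relation on $Gr^0(2,4)$'' to the functional equation (E) is exactly the paper's first step, but from there the two arguments diverge. The paper substitutes $y=z$ into the equation to obtain the sine-type identity $\varphi(x)\varphi(2y)=\varphi(y)\bigl(\varphi(x+y)+\varphi(x-y)\bigr)$ and then simply cites the classification of its continuous solutions from Parnami--Vasudeva \cite{ParVas}, so the entire analytic content is outsourced to the literature. You instead give a self-contained analytic proof: mollify to upgrade continuity to smoothness, differentiate at $c=0$ to extract the addition law $\varphi'(0)\varphi(a+b)=\varphi(a)\varphi'(b)+\varphi'(a)\varphi(b)$, and reduce to the ODE $\varphi''=\lambda\varphi$ with $\varphi(0)=0$, $\varphi'(0)\ne 0$ --- all of which I have checked and which is correct. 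What your route buys is transparency (it shows exactly where the sine comes from, namely the second-order ODE) and independence from the cited theorem; what the paper's route buys is brevity. Two remarks on your write-up. First, the ``$\varphi$ vanishes on no interval'' lemma is both the weakest link (the extraction of $\varphi(x+a)=\rho(a)\varphi(x)$ is not actually justified) and entirely unnecessary: for the mollification you only need one point $c_0$ with $\varphi(c_0)\ne0$; taking $\chi$ a narrow bump at $c_0$ makes $K=\int\varphi\chi\ne0$, and since $F(0)=K$ you may take the second bump $\psi$ near $b_1=0$, so $\varphi(a)\int F\psi$ is exhibited as a smooth function of $a$ with $\int F\psi\ne0$. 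I would delete the zero-set discussion. Second, a small foundational point: the triple $(a,b,c)=(0,0,0)$ forces all six minors to vanish, hence does not come from a rank-$2$ matrix, i.e.\ from an actual point of $Gr^0(2,4)$; so derive $\varphi(0)=0$ from the realizable points with $a=b=0$, $c\ne0$ (which give $\varphi(0)\bigl(2\varphi(c)-\varphi(-c)\bigr)=0$ and hence $\varphi(0)=0$ unless $\varphi\equiv0$) rather than from the degenerate one. Neither issue affects the validity of your overall argument.
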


\begin{proof}
In view of \eqref{eqn: Plucker}, we are looking for the continuous solutions to the functional equation
$\varphi(x)\varphi(y+z)-\varphi(y)\varphi(x-z)-\varphi(z)\varphi(x+y)=0$ for all $x,y,z\in\RR$.
Substituting $y=z$, we get the following functional equation:
$\varphi(x)\varphi(2y)=\varphi(y)(\varphi(x+y)+\varphi(x-y))$ for all $x,y\in\RR$.
By \cite[Theorem 2.7]{ParVas} the only continuous solutions of this equation are $\varphi(x)= kx$ or
$\varphi(x)= r\sin(kx)$ for some $r,k\in\CC$.
\end{proof}

\subsection{Dependence on the $\Delta$-set}
\label{subsec: eps-dependence}
 Note that coefficients $Z_{\Delta,\hbar}(\mu, or_\mu)$
of $\cZ_\hbar$ smoothly depend on vectors $\xi_i$, $i=1,\dots,n$.
This leads to various interesting consequences.

Let us illustrate this on an example of $(s,t)$-caterpillar curves which we considered in Section \ref{subsec: caterpillar}.
Take $\hbar=0$ and fix a pair $(s,t)$ of indices $1\le s< t\le n$.
Consider a deformed $\Delta$-set
$$\Delta_\eps=\{\xi_1,\dots,(1+\eps)\xi_s,\dots,\xi_t-\eps\xi_s,\dots,\xi_n\}$$
depending on a parameter $\eps\in\RR$.
Then each $Z_{\Delta_\eps,0}(\mu, or_\mu)$ is polynomial in $\eps$ of a degree at most $n-2$ and
we may consider terms of each degree separately. In particular, from the construction of $\cZ_0$ it is
easy to see that the leading coefficient of $\eps^{n-2}$ is non-zero only if a tree $\G$ representing
$\mu$ is an $(s,t)$-caterpillar; for any such $\G$ (with the blackboard orientation) the coefficient is $a_{2n-2}=\prod_i\det(\xi_i,\xi_s)$ where the product is over all $i\ne s,t$.
Thus, up to a multiplication by $a_{2n-2}$, top degree terms of $\cZ_0$ coincide with the
$(s,t)$-caterpillar cycle $Z_{st}$ of Section \ref{subsec: caterpillar}.

\subsection{Relation to classical enumerative problems}
\label{subsec: classical enumeration}
Let us consider the case of genuine rational tropical curves on a tropical toric surface.
This corresponds to a $\Delta$-set $\Delta=\{\xi_i\}$,  $i=1,2,\dots,n$, such
that $\xi_i\in\ZZ^2$.

For such a $\Delta$-set, \PTCs\ become genuine tropical curves after two minor modifications.
Firstly, we should drop the ordering of legs.
Secondly, we should add a notion of primitive integer vectors $\xi_{pr}(e)$ along edges
and corresponding multiplicities $m(e)\in\ZZ$ of edges: vectors $\xi(e)$ should be written
as a product $m(e)\xi_{pr}(e)$ with $m(e)$ being the GCD of the coordinates of $\xi(e)$.

Now we can compare our weighted number $N_{\Delta,\hbar}$ with the refined count of
rational tropical curves introduced by Block and G\"ottsche \cite{BG} (for the invariance of 
this count and its comparison to the usual count of complex and real tropical curves see
\cite{IM}). There are slight differences in notations and normalizations. 
Namely, we use a slightly different notation for quantum numbers: 
$[x]_y$ in notations of \cite{BG} equals our $[x]_\hbar$ for $y=e^{\pi i\hbar}$.
Also, in order to count curves with unordered legs as in \cite{BG} we should consider 
the normalized number $\frac1{| Aut(\Delta)|}N_{\Delta,\cZ_\hbar}$, see Remark 
\ref{rem: automorphism count}.
Apart from that, the only non-trivial difference in the setup is the presence of an 
orientation of the moduli space $\widehat{\mathcal{M}}_{\Delta,n,n-1}$ and an 
ordering of incoming half-edges in unmarked vertices (consistent with the orientation 
of $\widehat{\mathcal{M}}_{\Delta,n,n-1}$) required in the definition of vertices' weights 
$[\alpha\times\beta]_\hbar$, while  Block and G\"ottsche \cite{BG} consider the weight 
$[|\alpha\times\beta|]_\hbar$ corresponding to an unordered pair of edges.

To compare signs involved in these formulas for a graph $\G$ of a combinatorial type
$\mu$, fix an ordering of the incoming half-edges $e_1(v)$, $e_2(v)$ incident to $v$ at
each unmarked vertex $v$ of $\G$, consistent with the orientation $or_\mu$.
Note that by the definition \eqref{eq: quantum numbers} of quantum numbers we have
$[\alpha\times\beta]_\hbar=\sign(\det(\alpha,\beta))\cdot[|\alpha\times\beta|]_\hbar$.
Thus, using the definition \eqref{eq: mult} of $\mult(\mu, or_\mu)$, we may rewrite the  global
weight $Z_{\Delta,\hbar}(\mu)=W(\G;s_\Delta)=\prod_v [\xi_v(e_1(v))\times\xi_v(e_2(v)]_\hbar$
as  $$Z_{\Delta,\hbar}(\mu)=
\sign(\mult(\mu, or_\mu))\cdot\prod_v[|\xi_v(e_1(v))\times\xi_v(e_2(v)|]_\hbar\,,$$
where the product is over all unmarked vertices of $\G$, see Equation \eqref{eq: mult}.
In particular, for the blackboard orientation 
we have $\sign(\mult(\mu, or_\mu))=+1$, thus
$Z_{\Delta,\hbar}(\mu)=\prod_v[|\xi_v(e_1(v))\times\xi_v(e_2(v)|]_\hbar$.
Therefore we can conclude that the normalized $\cZ_\hbar$-weighted number
$\frac1{| Aut(\Delta)|}N_{\Delta,\cZ_\hbar}(\mathbf{p})$ of curves through $\mathbf{p}$
coincides with the corresponding Block-G\"ottsche's refined count of rational tropical curves.
\begin{rem}
Recall that the refined count of \cite{BG} interpolates between the tropical Gromov-Witten 
invariant (counting complex curves) for $y=e^{\pi i\hbar}=1$ and the tropical Welschinger invariant 
(counting real curves) for $y=e^{\pi i\hbar}=-1$, see \cite[Theorem 1]{IM}. 
Thus in the tropical case $\frac1{| Aut(\Delta)|}N_{\Delta,\cZ_\hbar}$ 
equals to the tropical Gromov-Witten invariant for $\hbar = 0$ and to the tropical Welschinger 
invariant for $\hbar = 1$.  
If we denote the tropical complex multiplicity $|\xi_v(e_1(v))\times\xi_v(e_2(v))|$ of $v$ by 
$w_{\cc}(v)$, then the real multiplicity $w_{\rr}(v)$ of $v$ is usually defined to be 0 
if $w_{\cc}(v)$ is even and  $(-1)^{\frac{w_{\cc}(v)-1}{2}}$ if $w_{\cc}(v)$ is odd. 
In the general pseudotropical case, $w_{\cc}(v)$ is not necessarily an integer; thus 
$w_{\rr}(v)=[w_{\cc}(v)]_1=\sin(\pi\cdot w_{\cc}(v)/2)$ may take non-integer values 
between $-1$ and $1$. 
This sheds a new light on the real multiplicity $w_{\rr}(v)$.
\end{rem}

\section{Recursive formula}
\label{sec: recursion}
In this section we shall derive a recursive formula for the numbers $N_{\Delta,\hbar}$.
Since we shall be now interested only in its dependence on the vectors in $\Delta$, let us
fix $\hbar$ and use a shorthand notation $N(\Delta):=N_{\Delta,\hbar}$.

\subsection{Notations and definitions}
For $n>3$ fix a $\Delta$-set $\Delta = \{\xi_1,\ldots, \xi_n\}$ and a configuration
$\mathbf{p} = (p_1,\dots, p_{n-1})\in(\RR^2)^{n-1}\minus\cD$ of $n-1$ points in
$\RR^2$ in general position.
We shall take one of the points -- say, $p_{n-1}$ -- close to infinity in certain direction, while
keeping all other marked points in a small disk $D$ around the origin.
To describe the structure of curves through $\mathbf{p}$ we shall need some notations.

Let $C =[(\G, \ell, h, n-1)]$ be a curve in a top-dimensional face $\mathcal{M}_{\Delta}(\mu)$.
Two connected components of $\G\minus E^m$ incident to $z_{n-1}$ contain two legs of $\G$;
denote them by $s$ and $t$, so that $\det(\xi_s,\xi_t)\ge0$.
Denote also by $L$ a path of edges of $\G$ starting at $s$ and ending on $t$, numbering
unmarked vertices $v_1,v_2,\dots,v_k$ on $L$ in the order of their passage along $L$.
The complement $\G\minus L$ of $L$ consists of $k$ connected components; denote
by $\G_j$, $j=1,\dots,k$ a component meeting $L$ at $v_j$. See Figure \ref{fig: L_cut}.
Let $I_i$ be the set of indices of all legs of $\G$ in $\G_j$; denote $n_j:=|I_j|$.
Note that $\sum n_j=n-2$ and together $s$, $t$ and subsets $(I_1,\dots,I_k)$ define a partition
$S$ of $\{1,\dots,n\}$.
Equip $\G_j$ with Lie-orientations at vertices, slope vectors, lengths of edges and ordering of legs
inherited from $\G$. Consider the edge of $\G_j$ meeting $L$ at $v_j$ as the last, $(n_j+1)$-st,
leg of infinite length with the slope vector $\eta_j := - \sum_{i\in I_j} \xi_i$.
This data makes $\G_j$ into an $n_j$-marked curve $C_j =[(\G_j, \ell\,\restrict{\G_j}, h\,\restrict{\G_j}, n_j)]$.
The outer flow directions of edges inherited from $\G$ implies that all curves $C_j$ are rigid.
See Figure \ref{fig: L_cut}.

\begin{figure}[htb]
\includegraphics[width=5.0in]{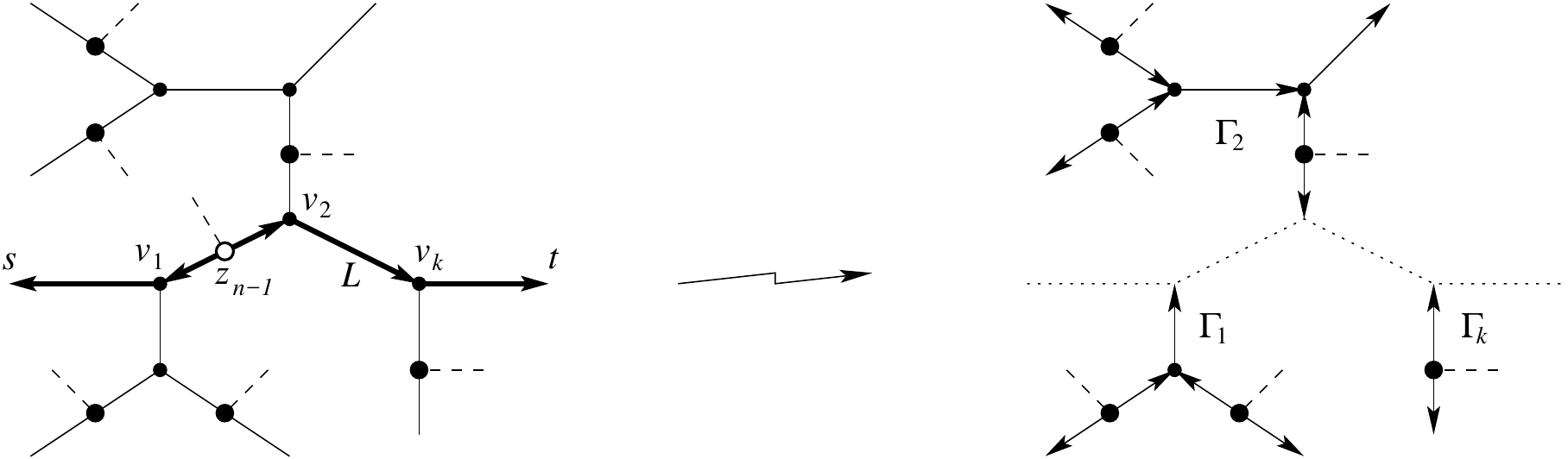}
\caption{Cutting $\G$ along the path $L$.
\label{fig: L_cut}}
\end{figure}

\begin{lem} Suppose that all marked points $p_i$, $i=1,\dots, n-2$ (except for $p_{n-1}$)
lie in a small disk of radius $\eps$ around the origin.
Let $C\in\ev^{-1}(\mathbf{p})$ be a curve of a combinatorial type $\mu$ with
$\mult(\mu, or_\mu)\ne 0$.
Then there is a constant $c_\mu\in\RR$ (depending only on $\mu$) such that all vertices
of $h(\G\minus L)$ lie in a disk of radius $c\cdot\eps$ around the origin.
\end{lem}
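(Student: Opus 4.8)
The plan is to track positions of vertices of $h(\G)$ by walking along chains of bounded edges from a marked point, using the fact that a non-degenerate combinatorial type $\mu$ forces the slope vectors $\xi(e)$ to be a fixed (non-zero, $\ell$-independent) collection determined by $\mu$ and $\Delta$ (Corollary \ref{cor: Uniqueness of extension}), while the \emph{lengths} $l_e$ of bounded edges are the only moduli. First I would observe that, since $C$ is rigid of a top-dimensional type, every connected component of $\G\minus L$ is a tree attached to the path $L$ at a single vertex $v_j$, and every vertex of such a component $\G_j$ is joined to $v_j$ by a unique chain of bounded edges; moreover each such $\G_j$ carries exactly one unmarked leg (by the rigidity analysis in Section \ref{subsec: faces}, since $\G\minus L$ lies inside the union of the $n$ components of $\G\minus E^m$, each with one leg). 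So to bound $h(\G\minus L)$ it suffices to bound $h(v_j)$ for each $j$ and to bound the lengths $l_e$ of the bounded edges inside each $\G_j$.

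Next I would bound the lengths. Inside a component $\G_j$, consider the marked point $z_i$ lying in $\G_j$; it is within a small disk of radius $\eps$ around the origin by hypothesis (note $i\le n-2$, so $z_i\neq z_{n-1}$). Walking from $z_i$ along bounded edges of $\G_j$, repeated applications of the balancing/edge-chain relation $h(w')-h(w)=\sum_{e} l_e\,\xi(e)$ express the position of any vertex of $\G_j$ as a fixed $\mathbb{Z}$-linear combination (with coefficients in $\{0,\pm1\}$, by orientation signs) of the vectors $l_e\xi(e)$, $e\in E^0(\G_j)$, plus $h(z_i)$. The key point is that these vectors $\{\xi(e)\}$, together with the incidence combinatorics, form a matrix of full rank: this is precisely the statement that $\ev\restrict{\mu}$ is injective when $\mult(\mu,or_\mu)\neq0$ (Proposition \ref{prop: Structure of evaluation map}, $\det(\ev\restrict{\mu})=(-1)^n\mult(\mu,or_\mu)\neq0$). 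More carefully, the images $h(z_1),\dots,h(z_{n-1})$ determine all lengths $l_e$ linearly and invertibly; restricting to the sub-system coming from the marked points inside $\bigcup_j\G_j$ (and from $v_1,\dots,v_k$ on $L$, whose mutual differences along $L$ we also control), we get a linear map $A_\mu$, depending only on $\mu$, from the length-coordinates of edges in $\G\minus L$ (and the relevant offsets) to the displacements $h(z_i)-h(z_{i'})$; since $A_\mu$ is injective, $\|A_\mu^{-1}\|<\infty$, so $\sum_{e\in E^0(\G\minus L)} l_e \le \|A_\mu^{-1}\|\cdot(\text{diam of the }p_i\text{'s})\le c_\mu'\,\eps$. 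Then $h(v_j)$ itself is bounded: $v_j$ is joined to the marked point $z_i\in\G_j$ by a chain of bounded edges lying inside $\G_j$, so $|h(v_j)-h(z_i)|\le\sum_{e\in\G_j} l_e\,|\xi(e)|\le (\max_e|\xi(e)|)\cdot c_\mu'\,\eps$, and $|h(z_i)|\le\eps$. Combining, every vertex of $h(\G\minus L)$ is within distance $c_\mu\,\eps$ of the origin for a suitable $c_\mu$ depending only on $\mu$ (through $\|A_\mu^{-1}\|$, the edge-slopes, and the number of edges).

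The main obstacle, and the point requiring the most care, is the bookkeeping that packages ``position of a vertex of $\G\minus L$'' as a bounded linear function of the $p_i$'s with $\mu$-dependent but $\eps$-independent coefficients — in particular, making sure that the length-coordinates of edges on the path $L$ itself (which are \emph{not} bounded, since $p_{n-1}$ runs off to infinity) genuinely decouple from those of $\G\minus L$. This decoupling is exactly the block-triangular structure exploited in the proof of Proposition \ref{prop: Structure of evaluation map}: cutting along an edge of $L$ splits $\ev\restrict{\mu}$ into blocks, and iterating the edge-cut along all of $L$ exhibits $\ev\restrict{\mu}$ as block-triangular with the ``$\G\minus L$ part'' forming an invertible square block that sees only $p_1,\dots,p_{n-2}$. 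Once that structure is in place, the radius estimate is the elementary operator-norm inequality $\|A_\mu^{-1}x\|\le\|A_\mu^{-1}\|\,\|x\|$ already used in the proof of Lemma \ref{lem: ev_continuous}, and $c_\mu$ can be taken to be $(\max_e|\xi(e)|+1)\cdot(\|A_\mu^{-1}\|+1)$ times a combinatorial constant.
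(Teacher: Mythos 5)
Your argument is essentially the paper's: both reduce the estimate to the invertibility of the evaluation map on the non-degenerate pieces hanging off $L$ (Proposition \ref{prop: Structure of evaluation map}) together with the operator-norm inequality $\|x\|\le\|A^{-1}\|\cdot\|Ax\|$ from Lemma \ref{lem: ev_continuous}; the paper just packages the ``decoupling'' from the unbounded $L$-lengths that you worry about by applying Proposition \ref{prop: Structure of evaluation map} directly to each sub-curve $C_j$, viewed as a rigid $n_j$-marked curve with its own root and its own evaluation map, rather than by extracting a block of the global matrix $\ev\restrict{\mu}$. One harmless slip: a component $\G_j$ of $\G\minus L$ carries $n_j=|I_j|\ge 1$ unmarked legs of $\G$, not necessarily exactly one, but nothing in your estimate actually uses that claim.
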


\begin{proof}
We repeat the argument of Lemma \ref{lem: ev_continuous}.
Each vertex of $\G\minus L$ belongs to $\G_j$ for some $j=1,\dots,k$.
Denote by $\mu_j$ the combinatorial type of $\G_j$.
The rigid marked curve $\G_j$ is non-degenerate, since $\mult(\mu, or_\mu)\ne 0$ so at any
vertex $v$ of $\G$ (and thus, in particular, of $\G_i$) slope vectors $\xi_v(e)$ span $\RR^2$.
Thus by Proposition \ref{prop: Structure of evaluation map} the restriction $\ev\restrict{\mu_j}$
of $\ev$ to $\overline{\mathcal{M}}_{\Delta}(\mu_j)$ is an invertible linear map, so
$\left|\left|(\ev\restrict{\mu_j})^{-1}\right|\right|\ne 0$.
Finally, the inequality $||x||\le||A^{-1}||\cdot ||Ax||$ for an invertible linear operator
$A:\RR^d\to\RR^d$ implies the lemma for
$c_\mu=\max\limits_j \left|\left|(\ev\restrict{\mu_j})^{-1}\right|\right|$.
\end{proof}

\subsection{Taking a marked point to infinity}
\label{subsec: p_to_infinity}
Now, take $p_{n-1}$ close to infinity in some sufficiently generic (different from directions of
vectors $\sum_{i\in I}\xi_i$ for all subsets $I$ of $1,2,\dots,n$) direction $\xi_0$, while keeping
all other marked points in a small disk around the origin.
Let $C\in\ev^{-1}(\mathbf{p})$ be a non-degenerate curve through $\mathbf{p}$.
By the above Lemma, enlarging the initial disk we can assure that all (both marked and unmarked)
vertices of $h(\G\minus L)$ lie in a small disk $D$. Due to our generic choice of direction $\xi_0$
and the fact that $p_{n-1}$ is close to infinity we may also assume that the pair of edges in $L$
incident to $z_{n-1}$ map outside $D$.
Then the image $h(\G)$ of $C$ looks as shown in Figure \ref{fig: pt2infty}:
$p_{n-1}$ lies on $h(L)$, with all vertices of $h(\G\minus L)$ being inside $D$ and $k$ edges in
$h(\G\minus L)$ starting in $D$ and ending in vertices $h(v_j)$ on $h(L)$.
The balancing condition implies that the path $h(L)$ of edges is convex  and does not intersect $D$.
Moreover, directions of vectors $\xi_0$ and $\eta_j$'s satisfy the following properties.
Let us call a collection $(\zeta_1,\dots, \zeta_m)$ of $m\ge 2$ non-zero vectors in $\RR^2$
\emph{ordered counterclockwise} if $\det(\zeta_i, \zeta_j)\ge 0$ for any $i<j$.  In other words,
we call the collection counterclockwise ordered, if points $\zeta_i/||\zeta_i||$ appear on the unit
half-circle in the order prescribed by their indices.
Then the triple $(\xi_s,\xi_0,\xi_t)$ is counterclockwise ordered; moreover, since vertices $v_j$ are
numbered in the natural order of their passage along $L$, the tuple $(\xi_s,\eta_1,\dots,\eta_k,\xi_t)$
is also ordered counterclockwise. See Figure \ref{fig: pt2infty}.

\begin{figure}[htb]
\includegraphics[height=1.5in]{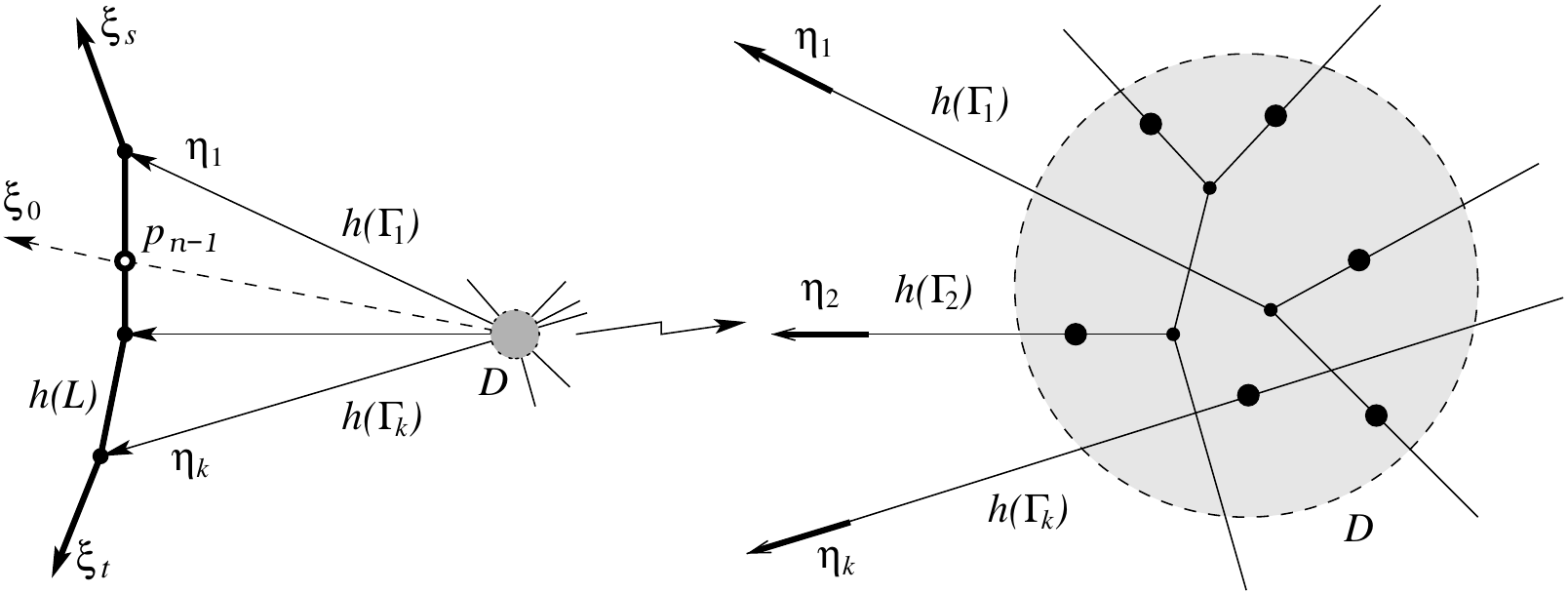}
\caption{Taking a marked point to infinity.
\label{fig: pt2infty}}
\end{figure}

It remains to find the weight $Z_{\Delta,\hbar}(\G)$ of such a curve $\G$.
Note that the slope vectors along edges of $L$ can be reconstructed directly from $S$ and $\Delta$.
Indeed, it is easy to see that the slope vector of the edge connecting $v_{j-1}$ with $v_j$ is
$-\xi_s+\eta_1+\dots+\eta_{j-1}$.
Since the other slope vector entering $v_j$ is $\eta_j$, we can easily compute the weight
$[\xi_{v_j}(e_1)\times \xi_{v_j}(e_2)]_\hbar$ of the vertex $v_j$.
To stress its dependence on the partition $S=(\{s\},\{t\},I_1,I_2,\dots,I_k)$ we denote it by $w_j(S)$):
$$
w_j(S):=\left[\sum_{i,m} \det(\xi_i,\xi_m)\right]_\hbar\, ,\quad i\in I_j\, ,\quad m\in \{s\}\cup I_1\dots\cup I_{j-1}\,.
$$
Thus we have
\begin{equation}
\label{eq: L_split}
Z_{\Delta,\hbar}(\G)=\prod_{j=1}^k w_j(S)\cdot Z_{\Delta,\hbar}(\G_j)\,.
\end{equation}
Each of the curves $C_j$ passes through a sub-collection $\mathbf{p}_j\subset \mathbf{p}$ of
$n_j$ points. Note that the number of partitions of $\{p_1,\dots,p_{n-2}\}$ into $k$ sub-collections
$\mathbf{p}_j$ of $n_j$ points is  $ \frac{(n-2)!}{n_1!n_2!\dots n_k!}$.

\subsection{The recursion formula}
We are finally ready to state a recursive formula for $N(\{\xi_1,\dots,\xi_n\})$, $n>3$ in terms of
$N(\{\xi_{i_1},\xi_{i_2},\dots,\xi_{i_m},-\eta_j\})$ for various subsets of $\xi$-vectors with $m\le n-2$.
Recall that for $n=2$ the weight $Z_{\Delta,\hbar}$ of a 1-marked line with the $\Delta$-set
$\Delta = \{\xi,-\xi\}$ is always $1$, independently on the vector $\xi$, so we have $N(\{\xi,-\xi\})=1$
for any $\xi$ (since there is one line passing through a point $p_1$ in any fixed direction $\xi$).
Thus we already know the values of $N(\xi_i,-\xi_i)=1$ and
$N(\{\xi_1,\xi_2,-\xi_1-\xi_2\})= [|\det(\xi_1,\xi_2)|]_\hbar$ for $|\Delta|=2,3$.
Thus the recursion allows one to calculate $N(\{\xi_1,\dots,\xi_n\})$ for any given $n$ and a
$\Delta$-set $\Delta = \{\xi_1,\ldots, \xi_n\}$.

\begin{defn}
Let $\Delta = \{\xi_1,\ldots, \xi_n\}$ be a $\Delta$-set. Fix a unit vector $\xi_0$.
For a subset $I=\{i_1,\dots,i_m\}$ of indices let us introduce a shorthand
notation $$N(I)=N(\{\xi_{i_1},\dots,\xi_{i_m},-\sum_{i\in I}\xi_i\})\,.$$
An (ordered) partition $S=(\{s\},\{t\},I_1,I_2,\dots,I_k)$ of $\{1,\dots,n\}$ into two
1-element subsets $\{s\}$,$\{t\}$ and $k$ non-empty subsets $I_j$ is
{\em admissible} w.r.t. $\Delta$ and $\xi_0$, if the triple $(\xi_s,\xi_0,\xi_t)$ and the tuple
$(\xi_s,\eta_1,\dots,\eta_k,\xi_t)$ are counterclockwise ordered.
\end{defn}

As we have seen in Section \ref{subsec: p_to_infinity} above, to any curve $C$ through $\mathbf{p}$
we can assign such an admissible partition and a splitting of the curve into $L$ and curves $C_j$ through
sub-collections $\mathbf{p}_j$ of marked points.
Vice versa, suppose that we are given an admissible partition $S$ with $I_j=\{i_1,\dots,i_{n_j}\}$, $j=1,\dots,k$
and an arbitrary partition of $\{p_1,\dots,p_{n-2}\}$ into $k$ sub-collections $\mathbf{p}_j$
of $n_j$ points.
For $j=1,\dots, k$ define $\Delta_j=\{\xi_{i_1},\dots,\xi_{i_n}\}$ and trace any curve $C_j$ with the
$\Delta$-set $\Delta_j$ through $\mathbf{p}_j$.
Then there is a unique curve $C$ through $\mathbf{p}$ corresponding to such a collection of curves:
we  merge $C_j$'s together with $L$ (note that the slope vectors of $L$ can be found directly from $S$
and $\Delta$, so $h(L)$ can be uniquely traced through $p_{n-1}$; admissibility of $S$ guarantees that it
is possible).
The weight of this curve is given by Equation \eqref{eq: L_split}.
Summarizing the analysis in Section \ref{subsec: p_to_infinity}, we thus conclude with

\begin{thm}
Fix a $\Delta$-set $\Delta = \{\xi_1,\ldots, \xi_n\}$ with $n>3$ and a configuration
$\mathbf{p} = (p_1,\dots, p_{n-1})\in(\RR^2)^{n-1}\minus\cD$ of $n-1$ points in
$\RR^2$ in general position.
Fix a direction $\xi_0$ in which we slide the marked point $p_{n-1}$ to infinity.
Then
$$N(\Delta) = N(\{\xi_1,\dots,\xi_n\})=\sum_{k=1}^{n-2}\sum_S \frac{(n-2)!}{n_1!n_2!\dots n_k!}
\prod_{j=1}^k w_j(S)\cdot N(I_j)$$
Here the sum is over all admissible $(k+2)$-partitions $S=(\{s\},\{t\},I_1,\dots,I_k)$
of $\{1,\dots,n\}$ as above and $n_j=|I_j|$, $j=1,\dots, k$.
\end{thm}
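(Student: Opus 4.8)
The plan is to evaluate $N(\Delta)=N_{\Delta,\cZ_\hbar}(\mathbf{p})$ by pushing the marked point $p_{n-1}$ to infinity and reading off the degenerating curves. First I would invoke Theorem~\ref{thm: count} applied to the cycle $\cZ_\hbar$: the $\cZ_\hbar$-weighted count $N_{\Delta,\cZ_\hbar}(\mathbf{p})$ is independent of the generic configuration $\mathbf{p}$, so I may assume that $\mathbf{p}$ is chosen as in Section~\ref{subsec: p_to_infinity}, with $p_{n-1}$ far out in the generic direction $\xi_0$ and $p_1,\dots,p_{n-2}$ inside a small disk $D$ about the origin. By \eqref{eq: N_DeltaZ},
\begin{equation*}
N(\Delta)=\sum_\mu Z_{\Delta,\hbar}(\mu,or_\mu)\,N_\Delta(\mu,\mathbf{p})\,,
\end{equation*}
a sum over the finitely many combinatorial types $\mu$ of rigid curves through $\mathbf{p}$, weighted by $Z_{\Delta,\hbar}(\mu,or_\mu)$; since degenerate types have weight $0$, only non-degenerate curves $C\in\ev^{-1}(\mathbf{p})$ contribute.

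The heart of the argument is a bijection between these curves and the data on the right-hand side. In one direction, I would use the Lemma above together with the geometric description of Section~\ref{subsec: p_to_infinity}: enlarging $D$ if necessary, every non-degenerate $C=[(\G,\ell,h,n-1)]$ through $\mathbf{p}$ must look like Figure~\ref{fig: pt2infty}, so cutting $\G$ along the convex path $L$ joining the legs $s$ and $t$ through $z_{n-1}$ produces an admissible partition $S=(\{s\},\{t\},I_1,\dots,I_k)$, $n_j=|I_j|$, a distribution of $\{p_1,\dots,p_{n-2}\}$ into sub-collections $\mathbf{p}_j$ of size $n_j$, and rigid $n_j$-marked curves $C_j$ through $\mathbf{p}_j$ with $\Delta$-sets $\Delta_j=\{\xi_i\}_{i\in I_j}\cup\{\eta_j\}$, $\eta_j=-\sum_{i\in I_j}\xi_i$. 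Conversely, from an admissible $S$, a distribution $\{\mathbf{p}_j\}$ and rigid curves $C_j$ through $\mathbf{p}_j$ I would reconstruct $C$: the slopes along $L$ are forced by $S$ and $\Delta$ (the edge $v_{j-1}v_j$ carries $-\xi_s+\eta_1+\dots+\eta_{j-1}$), admissibility of $S$ guarantees that the broken line with these slopes through $p_{n-1}$ exists, is convex, and meets the infinite ray of each $C_j$, and the outer-flow orientation makes the merged curve rigid. Verifying that these two constructions are mutually inverse is the step I expect to be the main obstacle: it requires a careful realizability check — that for an admissible $S$ the path $h(L)$ can always be closed up so as to hit the prescribed ray of each $C_j$ exactly once, and that distinct data never yield the same $C$.

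Finally I would assemble the count. By the weight factorization \eqref{eq: L_split}, a curve $C$ of type $\mu$ corresponding to data $(S,\{\mathbf{p}_j\},\{\mu_j\})$, where $\mu_j$ is the type of $C_j$, satisfies
\begin{equation*}
Z_{\Delta,\hbar}(\mu,or_\mu)=\prod_{j=1}^k w_j(S)\cdot\prod_{j=1}^k Z_{\Delta_j,\hbar}(\mu_j,or_{\mu_j})\,.
\end{equation*}
Summing over all curves through $\mathbf{p}$ via the bijection: for a fixed admissible $S$ there are $\tfrac{(n-2)!}{n_1!\cdots n_k!}$ ways to distribute the $n-2$ points into groups of sizes $n_1,\dots,n_k$, and for a fixed $S$ and distribution the sum over the $\mu_j$ factors, using \eqref{eq: N_DeltaZ} for each piece together with the configuration-independence of $N(I_j)$, as $\prod_{j=1}^k\bigl(\sum_{\mu_j}Z_{\Delta_j,\hbar}(\mu_j,or_{\mu_j})N_{\Delta_j}(\mu_j,\mathbf{p}_j)\bigr)=\prod_{j=1}^k N(I_j)$. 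Collecting terms yields
\begin{equation*}
N(\Delta)=\sum_{k=1}^{n-2}\sum_{S}\frac{(n-2)!}{n_1!\cdots n_k!}\prod_{j=1}^k w_j(S)\cdot N(I_j)\,,
\end{equation*}
the asserted recursion.
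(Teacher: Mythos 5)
Your proposal follows essentially the same route as the paper: use the $\mathbf{p}$-independence from Theorem~\ref{thm: count} to push $p_{n-1}$ to infinity in the direction $\xi_0$, localize the rest of the curve in a disk via the Lemma, cut along the convex path $L$ to obtain an admissible partition together with the sub-curves $C_j$, factorize the weight by \eqref{eq: L_split}, and reassemble with the multinomial coefficient counting distributions of the points. The realizability/bijectivity step you flag as the main obstacle is exactly the point the paper also treats briefly (asserting that admissibility of $S$ guarantees $h(L)$ can be traced uniquely through $p_{n-1}$ and merged with the $C_j$'s), so your argument matches the paper's in both structure and level of detail.
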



\end{document}